\newtheorem{theorem}{Theorem}[section]
\newtheorem{lemma}[theorem]{Lemma}
\newtheorem{corollary}[theorem]{Corollary}
\newtheorem{prop}[theorem]{Proposition}
\theoremstyle{definition}
\newtheorem{definition}[theorem]{Definition}
\theoremstyle{remark}
\newtheorem{remark}[theorem]{Remark}
\newcommand{\C}{\mathbb{C}}
\newcommand{\Q}{\mathbb{Q}}
\newcommand{\cM}{\mathcal{M}}
\newcommand{\PP}{\mathbb{P}}
\newcommand{\Aut}{\operatorname{Aut}}
\newcommand{\Bir}{\operatorname{Bir}}
\newcommand{\Pic}{\operatorname{Pic}}
\newcommand{\mult}{\operatorname{mult}}
\begin{document}

\title{G-birational superrigidity of Del Pezzo surfaces of degree 2 and 3}

\author{Lucas das Dores}
\address{Department of Mathematical Sciences, 524, University of Liverpool, Mathematical Sciences Building, L69 7ZL,  United Kingdom}
\email{lsdores@liverpool.ac.uk}

\author{Mirko Mauri}
\address{Department of Mathematics, Imperial College, London, 180 Queen’s Gate, London SW7 2AZ, UK}
\email{m.mauri15@imperial.ac.uk}

\date{}

\begin{abstract}
Any minimal Del Pezzo $G$-surface $S$ of degree smaller than $3$ is $G$-birationally rigid. We classify those which are $G$-birationally superrigid and for those which fail to be so, we describe the equations of a set of generators for the infinite group $\Bir^G(S)$ of $G$-birational automorphisms. 
\end{abstract}

\maketitle

\section{Introduction}

The group of birational automorphisms of $\PP^2(\C)$ is classically known as Cremona group, denoted $\operatorname{Cr}_2(\C)$. The classification of its finite subgroups up to conjugacy rose the interest of many classical authors and it has been completed in \cite{DI}. In this paper, we refine the description of the conjugacy class of some special finite subgroups.

The key reduction step in the classification consists in associating to any finite subgroup $G$ of $\operatorname{Cr}_2(\C)$ a group of automorphisms of a rational surface, isomorphic to $G$, see \cite[\S 3.4]{DI}. Via a $G$-equivariant version of Mori theory, one can suppose that the surface is minimal with respect to the $G$-action. Here, we concentrate our attention to those finite subgroups of $\operatorname{Cr}_2(\C)$ which act minimally by automorphisms on Del Pezzo surfaces $S$ of degree $2$ and $3$. In particular, when the normaliser of $G$ is not generated by automorphisms of the Del Pezzo surface, \emph{i.e.} the surface $S$ is not $G$-birationally superrigid, we describe explicitly the generators of the normaliser.

In order to formulate our main results, we recall the definition of minimal $G$-surface. Let $(S, \rho)$ be a $G$-surface, \emph{i.e.} a nonsingular surface $S$ defined over $\mathbb{C}$, endowed with the action of a finite group of automorphisms $\rho:G \to \Aut(S)$. Given two $G$-surfaces $(S, \rho)$ and $(S', \rho')$, we say that a rational map $\varphi: S \dashrightarrow S'$ is {\it $G$-rational} if for any $g \in G$ the following diagram commutes
\[
\xymatrix{
	S \ar[d]_-{\rho(g)} \ar@{-->}^{\varphi}[r] & S' \ar[d]^-{\rho'(g')}\\
	S \ar@{-->}_{\varphi}[r] & S'
}
\]
for some $g' \in G$.
Then, a {\it minimal $G$-surface} is a $G$-surface with the property that any birational $G$-morphism $S \to S'$ is an isomorphism. Equivalently, it is the output of a $G$-equivariant minimal model program, and as in the non-equivariant case, if $S$ is rational, it is either a Del Pezzo surface with $\Pic^G(S)\simeq \mathbb{Z}$, \emph{i.e.} $-K_S$ is ample, or a conic bundle with $\Pic^G(S)\simeq \mathbb{Z}^2$ (cf. \cite[Theorem 3.8]{DI}).

The main properties investigated in this paper are described in the following definitions.

\begin{definition}\label{Gbirationalrigiddef}
Let $(S, \rho)$ be a minimal Del Pezzo $G$-surface. Then $(S, \rho)$ is \textbf{$G$-birationally rigid} if there is no $G$-birational map from $S$ to any other minimal $G$-surface. Equivalently, if $S'$ is any minimal $G$-surface and $\varphi: S \dashrightarrow S'$ is any $G$-birational map, then $S$ is $G$-isomorphic to $S'$, not necessarily via $\varphi$. More precisely, there exists a $G$-birational automorphism $\sigma: S \dashrightarrow S$ such that $\varphi \circ \sigma$ is a $G$-biregular map.
\end{definition}
\begin{definition}
The minimal Del Pezzo $G$-surface $(S, \rho)$ is \textbf{$G$-birationally superrigid} if it is $G$-birationally rigid and in addition, in the notation of Definition \ref{Gbirationalrigiddef}, any $G$-birational map $\varphi: S \dashrightarrow S'$ is biregular. In particular, the group of $G$-biregular automorphisms coincides with the group of $G$-birational automorphisms, \emph{i.e.} $\Aut^G(S)=\Bir^G(S)$.
\end{definition}

A classical theorem by Segre \cite{S} and Manin \cite{M} establishes that nonsingular cubic surfaces of Picard number one defined over a non-algebraically closed field are birationally rigid. In analogy with this arithmetic case, Dolgachev and Iskoviskikh showed in \cite[\S 7.3]{DI} that minimal Del Pezzo $G$-surfaces of degree smaller than $3$ are $G$-birationally rigid. 
In this paper we determine which minimal Del Pezzo $G$-surfaces of degree $2$ and $3$ are $G$-birationally superrigid. When the $G$-surface is not $G$-birationally superrigid, we describe the generators of the group of birational $G$-automorphisms $\Bir^G(S)$, or equivalently the normaliser of the corresponding subgroup $G$ in $\operatorname{Cr}_2(\C)$. Here, we collect our main results, adopting the notation of \cite{DI}:
\begin{theorem}\label{Gbirationalrigid3}
Let $G$ be a non-cyclic group and $S$ be a minimal Del Pezzo $G$-surface of degree $3$. Then $S$ is $G$-birationally superrigid, unless $G$ is isomorphic to the symmetric group $S_3$ and $S$ is not the Fermat cubic surface. 

In this case, the group $\Bir^G(S)$ is generated by two or three Geiser involutions whose base points lie on the unique $G$-fixed line and by a subgroup of $\Aut(S)$ isomorphic  to:
\begin{enumerate}
\item $S_3$ if $S$ is of type V, VIII; 
\item $S_3 \times 2$ if $S$ is of type VI;
\item $S_3\times 3$ if $S$ is of type III, IV.
\end{enumerate}
The group $\Bir^G(S)$ of the very general non $G$-birationally superrigid minimal Del Pezzo $G$-surface of degree $3$ with $G \simeq S_3$ is not finite.
\end{theorem}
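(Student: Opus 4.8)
The plan is to analyze $G$-birational maps through the $G$-equivariant Noether--Fano--Iskovskikh method, which is already the engine behind the rigidity statement of \cite[\S 7.3]{DI} that we may assume. Suppose $\varphi\colon S\dashrightarrow S'$ is a $G$-birational map to a minimal $G$-surface which is not biregular, and let $\mathcal M=\varphi_*^{-1}\mathcal H'\subset|-nK_S|$ be the $G$-invariant mobile system obtained by pulling back a $G$-invariant subsystem of an anticanonical multiple on $S'$; here $\mathcal M\subset|-nK_S|$ because $\Pic^G(S)=\Z(-K_S)$. The inequality says that if $\varphi$ is not an isomorphism then the pair $(S,\tfrac1n\mathcal M)$ is not canonical, so there is a $G$-invariant center, necessarily a $G$-orbit $Z$ of points on the smooth surface $S$, with $\mult_p\mathcal M>n$ for $p\in Z$. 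Comparing with the self-intersection $\mathcal M^2=(-nK_S)^2=3n^2\ge\sum_{p\in Z}\mult_p(\mathcal M)^2>|Z|\,n^2$ forces $|Z|\le 2$. The untwisting associated with such a center is the projection (Geiser) involution $\sigma_p\colon S\dashrightarrow S$: blowing up $p$ produces a degree $2$ Del Pezzo surface $\mathrm{Bl}_pS$, and $\sigma_p$ is the birational involution induced by its Geiser involution. A direct lattice computation (the Geiser involution sends the class of the exceptional curve $E$ to $-K_{\mathrm{Bl}_pS}-E\neq E$) shows $\sigma_p$ is never biregular, and $\sigma_p\in\Bir^G(S)$ precisely when its center is $G$-fixed.

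This reduces the superrigidity dichotomy to a single criterion: $S$ is $G$-birationally superrigid if and only if the $G$-action has no usable small orbit, i.e.\ no $G$-fixed point (and no $G$-invariant pair giving an equivariant untwisting). I would now run through the Dolgachev--Iskovskikh classification of minimal cubic $G$-surfaces type by type, using their explicit equations, computing the $G$-fixed loci on $S$. For the large non-cyclic groups ($S_4$, $A_5$, $S_5$, the Hessian group, $3^3{:}S_4$, and so on) and for every non-cyclic $G\not\simeq S_3$, one checks that every $G$-orbit of points has size $\ge 3$, so no maximal center can occur and $S$ is superrigid. For $G\simeq S_3$ one shows there is a unique $G$-invariant line $\ell\subset S$ and that the $G$-fixed points of $S$ all lie on $\ell$; these exist, and the corresponding involutions are non-biregular, exactly when $S$ is not the Fermat cubic. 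I expect this to be the main obstacle: one must verify that for the Fermat cubic the minimal $S_3$-action has no $G$-fixed point (its exceptional symmetry redistributes the relevant points into larger orbits), whereas for types III--VIII the invariant line carries genuine $G$-fixed points. This is the content that forces the single exception in the statement.

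For the description of $\Bir^G(S)$ in the exceptional case I would invoke the $G$-equivariant Sarkisov program: every element of $\Bir^G(S)$ factors into elementary $G$-links, and since $S$ is $G$-birationally rigid the only minimal model $G$-birational to $S$ is $S$ itself, so each link is a self-link. The links that are not biregular are exactly the projection involutions $\sigma_p$ centered at $G$-fixed points, whence $\Bir^G(S)=\langle \Aut^G(S),\ \sigma_p : p\in\ell \text{ a }G\text{-fixed point}\rangle$. As $G\simeq S_3$ is normal in $\Aut(S)$, one has $\Aut^G(S)=\Aut(S)$, which equals $S_3$, $S_3\times 2$, or $S_3\times 3$ according to the type (read off from \cite{DI}); its extra central factor acts on $\ell$ and, together with conjugation by $\Aut(S)$, cuts the set of $G$-fixed points on $\ell$ down to the two or three base points from which the generating Geiser involutions are built. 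Assembling these involutions with $\Aut(S)$ yields the stated generating sets.

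Finally, to see that $\Bir^G(S)$ is infinite for very general $S$, I would compute the action of two distinct generating involutions $\sigma_1,\sigma_2$ on the Picard lattice of a common $G$-equivariant resolution, equivalently estimate the dynamical degree of $\sigma_1\sigma_2$. Each $\sigma_i$ acts as a reflection-type isometry of the hyperbolic lattice $\Pic$, and the product of two such reflections with distinct centers is a hyperbolic, hence infinite-order, isometry, so the degrees of $(\sigma_1\sigma_2)^m$ grow without bound and the base loci of the iterates are pairwise distinct. For very general $S$ no coincidence collapses these maps, so $\langle\sigma_1,\sigma_2\rangle$, and therefore $\Bir^G(S)$, is infinite. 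The genericity is used only to exclude the countably many special surfaces where extra automorphisms or relations could intervene.
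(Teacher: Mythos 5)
The skeleton of your approach (Noether--Fano inequalities bounding orbits to length $\le 2$, untwisting by Geiser/Bertini involutions, case analysis over the classification, normalizer computations) matches the paper's, but three of your key steps are wrong. First, your superrigidity criterion --- ``no $G$-fixed point (and no invariant pair)'' --- omits the essential geometric condition: a fixed point $p$ yields a Geiser involution only if no line of $S$ passes through $p$ (otherwise the blow-up of $S$ at $p$ is not a Del Pezzo surface), and a length-two orbit yields a Bertini involution only if it avoids the lines and conics of $S$ and satisfies a tangency condition; this is the content of the paper's Lemma \ref{blow-upisaDelPezzosurfacecubic} and Proposition \ref{strategycubicsurface}. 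The omission makes your predicted case analysis false: the Fermat cubic with its minimal $S_3$-action \emph{does} have $G$-fixed points (e.g.\ $p_0=(0:0:1:-1)$ is fixed by the whole stabiliser $3^2\rtimes K_4$); it is superrigid because these points are Eckardt points, not because they fail to exist. Likewise the non-cyclic group $2\times S_3\not\simeq S_3$ acting on the type VI surface has both a fixed point and a length-two orbit; superrigidity holds there because the fixed point is Eckardt and the orbit lies on a conic, contradicting your claim that every non-cyclic $G\not\simeq S_3$ has only orbits of size $\ge 3$. The paper also relies on Dolgachev--Duncan's classification of minimal non-cyclic actions with a fixed point to reduce everything to the single family $S_{ab}$, for which your plan offers no substitute.

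Second, your claim that $G\simeq S_3$ is normal in $\Aut(S)$, so that $\Aut^G(S)=\Aut(S)$, is false for types III, IV and V: there $\Aut(S)$ is $H_3(3)\rtimes 4$, $H_3(3)\rtimes 2$ and $S_4$ respectively, while $\Aut^G(S)=N_{\Aut(S)}(G)$ is the proper subgroup $S_3\times 3$, $S_3\times 3$, $S_3$ (the paper's Lemma \ref{Normalisernoncycliccase3}). Third, the infinitude argument cannot be run on ``the Picard lattice of a common $G$-equivariant resolution'': the two Geiser involutions admit no common regularization ($\varphi_{p_1}$ sends $p_2$ to $p_0$, so it does not lift to the blow-up at both points), and their composition is not hyperbolic --- it preserves the pencil of cubic curves cut by the planes through the fixed line, acting on each smooth member as translation by $2p_1$, hence it is a parabolic (Halphen-type) element whose order is infinite exactly when that point is non-torsion on the curve. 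This arithmetic dependence is why the paper proves infinitude by an explicit elliptic-curve computation on $S_{11}$ followed by a specialization argument over the parameter space, why ``very general'' is genuinely needed, and why infinitude for all $(a,b)\neq(0,0)$ is left as an open question; a purely lattice-theoretic reflection argument, if valid, would prove too much.
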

\begin{proof}
    Section \S \ref{Gbirationalsuperrigiditynoncyclicgroup}.
\end{proof}

\begin{theorem}\label{Gbirationalsuperrigidcubic}
    Let $G$ be a cyclic group and $S$ be a minimal Del Pezzo $G$-surface of degree $3$. Then $S$ is $G$-birationally superrigid if and only if $G$ is of order $6$ of type $A_5+A_1$.
    More precisely, if $S$ is not $G$-birationally superrigid, then $G$ is isomorphic to one of the following:
    \begin{enumerate}
        \item $G$ is a cyclic group of order $3$ of type $3A_2$. The group $\Bir^G(S)$ is (infinitely) generated by the Geiser involutions whose base points lie on the unique $G$-fixed nonsingular cubic curve and by a subgroup of $\Aut(S)$ isomorphic to $3^3\rtimes S_3$, if $S$ is the Fermat cubic surface, or by $\Aut(S)$ itself  otherwise.
        \item $G$ is a cyclic group of order $6$ of type $E_6(a_2)$. The group $\Bir^G(S)$ is (infinitely) generated by three Geiser involutions, the Bertini involutions whose base points lie on a $G$-invariant nonsingular cubic curve $C$ and by a subgroup of $\Aut(S)$ isomorphic to $3^3\times 2$, if $S$ is the Fermat cubic surface, or by $\Aut(S)$ itself  otherwise.
        \item $G$ is a cyclic group of order $9$ of type $E_6(a_1)$. The group $\Bir^G(S)$ is finitely generated by three Geiser involutions whose base loci are coplanar and by a subgroup of $\Aut(S)$ isomorphic to the dihedral group $D_{18}$.
        \item $G$ is a cyclic group of order $12$ of type $E_6$. The group $\Bir^G(S)$ is finitely generated by $G$, by a Bertini involution and by a Geiser involution whose base loci are aligned.
    \end{enumerate}
\end{theorem}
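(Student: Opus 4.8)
The plan is to run a $G$-equivariant version of the method of maximal singularities. Since the minimal cubic surfaces under consideration are already known to be $G$-birationally rigid (the equivariant counterpart of the Segre--Manin theorem), any $G$-birational map $\varphi\colon S\dashrightarrow S'$ to another minimal $G$-surface satisfies $S'\simeq S$, and the mobile linear system $\mathcal H=\varphi^{-1}_{*}\lvert -mK_{S'}\rvert$ lies in $\lvert -mK_S\rvert$ because $\Pic^G(S)=\Z\langle K_S\rangle$. By the $G$-equivariant Noether--Fano inequality, $\varphi$ fails to be biregular precisely when the pair $(S,\tfrac1m\mathcal H)$ is not canonical along some $G$-invariant subscheme, that is, when there is a $G$-orbit of (possibly infinitely near) points that is a maximal singularity. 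Thus $G$-birational superrigidity is equivalent to the assertion that every $G$-invariant mobile $\mathcal H\subseteq\lvert -mK_S\rvert$ is canonical.

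First I would reduce the possible maximal centres to two geometric situations. On a surface the centre of a maximal singularity is a point, since $\mathcal H$ is mobile. Blowing up a $G$-fixed point $p$ lying off the twenty-seven lines produces a del Pezzo surface of degree $2$ and hence a $G$-equivariant Geiser involution $\gamma_p$, while blowing up a $G$-invariant length-two cluster off the lines produces a del Pezzo surface of degree $1$ and a $G$-equivariant Bertini involution; a fixed point on a line instead creates a $(-2)$-curve and yields no link to a minimal model, and an invariant orbit of length $\geq 3$ drops the degree to zero. I would then invoke the $G$-equivariant Sarkisov program to show that these Geiser and Bertini involutions, together with $\Aut^G(S)$, untwist every maximal singularity and therefore generate all of $\Bir^G(S)$. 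Consequently $S$ is $G$-birationally superrigid if and only if $G$ fixes no point off the lines and admits no admissible invariant cluster.

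The next step is a type-by-type analysis of the fixed locus of a generator $g$ of $G$, using the classification of the five minimal cyclic actions ($3A_2$, $A_5+A_1$, $E_6(a_2)$, $E_6(a_1)$, $E_6$ in the Carter notation for $W(E_6)$) together with their normal forms and automorphism groups. The holomorphic Lefschetz fixed-point formula, which for a rational surface reads $\sum_{p}\det(1-dg_p)^{-1}=1$, combined with the eigenvalues of $g$ on $\Pic(S)\otimes\C$, pins down whether the fixed locus is a pointwise-fixed cubic curve or a finite set of isolated points, and whether those isolated points lie on the lines. In type $A_5+A_1$ every fixed point turns out to lie on a line and no invariant cluster is admissible, giving superrigidity; in type $3A_2$ the generator fixes a smooth hyperplane-section cubic $C$ pointwise (for instance $g\colon w\mapsto\zeta_3 w$ on the Fermat-type normal form), so that every general point of $C$ supports a Geiser involution and $\Bir^G(S)$ is infinitely generated. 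The orders $6$, $9$ and $12$ are treated analogously, the dichotomy between infinite and finite generation reflecting the presence ($E_6(a_2)$) or absence ($E_6(a_1)$, $E_6$) of a positive-dimensional fixed curve. Finally, reading off $\Aut^G(S)$ from the normal forms identifies the finite pieces $3^3\rtimes S_3$, $3^3\times 2$ and $D_{18}$, and the incidence data of the fixed points yield the coplanarity and collinearity conditions on the base loci.

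The main obstacle I anticipate is the generation statement rather than the superrigidity criterion itself: controlling every $G$-invariant maximal singularity in the equivariant Sarkisov factorisation and verifying that each is untwisted by exactly one of the listed Geiser or Bertini involutions — in particular excluding invariant orbits of length $\geq 3$ and infinitely near configurations that might produce exotic links — requires careful bookkeeping of orbit lengths against the eigenvalue data of $g$. A secondary difficulty is the explicit computation, on each normal form, of $\Aut^G(S)$ and of the incidence relations among the fixed points, on which the coplanarity and collinearity claims ultimately rest.
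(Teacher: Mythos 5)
Your overall architecture matches the paper's: reduce via the $G$-equivariant Segre--Manin/Noether--Fano machinery to the existence of a $G$-equivariant Geiser or Bertini involution, then run through the five cyclic types of \cite[Corollary 6.11]{DI} using their normal forms, and finish by computing $N_{\Aut(S)}(G)$. However, your reduction step contains a gap that propagates into the case analysis. You assert that blowing up a $G$-invariant length-two cluster \emph{off the lines} produces a del Pezzo surface of degree $1$; this is false as stated. The correct criterion (the paper's Lemma \ref{blow-upisaDelPezzosurfacecubic}) also requires that no conic contained in $S$ passes through both points and that neither point lies in the tangent plane of $S$ at the other. This is not a technicality: in type $A_5+A_1$ the unique orbit of length two, $\{(0:i:0:1),(0:-i:0:1)\}$, is excluded precisely because it lies on the conic $\{t_0+t_2=t_3^2+t_1^2-\lambda t_2^2=0\}\subseteq S$, not because it meets a line. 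With your criterion you would wrongly conclude that this case is not superrigid, contradicting the theorem.

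A second concrete error is your description of the mechanism in type $E_6(a_2)$: you attribute the infinite generation to a ``positive-dimensional fixed curve'', but for $g=(t_0:t_1:-t_2:\epsilon_3 t_3)$ the pointwise fixed locus on $S$ is finite (three points on the line $\{t_2=t_3=0\}$ plus an Eckardt point). The infinitely many generators are \emph{Bertini} involutions based at orbits of length two sweeping out the $G$-\emph{invariant} (not fixed) cubic $\{t_3=0\}\cap S$, on which $G$ acts through an involution, together with three Geiser involutions at the fixed points. Relatedly, the holomorphic Lefschetz identity you quote, $\sum_p\det(1-dg_p)^{-1}=1$, is only valid when the fixed locus is already known to be isolated, so it cannot by itself decide between a fixed curve and isolated points; the paper avoids this by reading the fixed locus directly off the diagonalised normal forms. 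Finally, note that for the ``(infinitely) generated'' claims you must actually exhibit infinitely many distinct admissible orbits (done in the paper by showing the relevant cubic curve is covered by them), and for types $E_6(a_1)$ and $E_6$ the infinitude of $\Bir^G(S)$ is a separate statement proved by a translation-by-a-non-torsion-point argument on an invariant pencil of elliptic curves, which your sketch does not address.
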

\begin{proof} Section \S \ref{section G-birational superrigidity for cyclic group}.
\end{proof}

\begin{theorem}\label{Gbirationalrigid2} 
Let $G$ be a non-cyclic group and $S$ be a minimal Del Pezzo $G$-surface of degree $2$. Then $S$ is $G$-birationally superrigid.
\end{theorem}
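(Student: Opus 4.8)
The plan is to combine the equivariant Noether--Fano--Iskovskikh method with a classification of the non-canonical centres that can occur on a degree-$2$ surface, and then to analyse $G$-fixed points case by case. Since the $G$-birational rigidity of $S$ is already known (cf. \cite[\S 7.3]{DI}), any $G$-birational map $\varphi\colon S\dashrightarrow S'$ onto a minimal $G$-surface has $S'$ $G$-isomorphic to $S$; in particular the target is never a conic bundle, so it suffices to prove that every $G$-birational self-map of $S$ is biregular, i.e.\ that $\Bir^G(S)=\Aut^G(S)$.

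First I would set up the Noether--Fano inequality. Suppose $\varphi$ is not biregular and let $\mathcal H=\varphi^{-1}_*\lvert -n'K_{S'}\rvert\subset\lvert -nK_S\rvert$ be the strict transform of a very ample $G$-invariant linear system, using $\Pic^G(S)\simeq\Z$. Then $(S,\tfrac1n\mathcal H)$ is not canonical, so there is a $G$-invariant collection of non-canonical centres. At a smooth point $p$ this means $\mult_p\mathcal H>n$, and every point of the orbit $G\cdot p$ has the same multiplicity $m>n$. Intersecting two general members gives
\[
2n^2=(-nK_S)^2=\mathcal H^2\ \ge\ \lvert G\cdot p\rvert\, m^2\ >\ \lvert G\cdot p\rvert\, n^2,
\]
so $\lvert G\cdot p\rvert<2$, i.e.\ the orbit is a single $G$-fixed point $p$ with $n<m\le\sqrt2\,n$. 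Thus any failure of superrigidity is detected by a $G$-fixed point.

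Next I would identify the untwisting link. The only Sarkisov self-link of a degree-$2$ del Pezzo surface centred at a single point is the Bertini involution $\beta_p$: blow up $p$ to obtain a degree-$1$ surface $Z=\operatorname{Bl}_pS$, apply the (central, hence $G$-equivariant) Bertini involution $\beta$ of $Z$, and contract the $(-1)$-curve $\beta(E)$; by rigidity this second contraction again lands on a degree-$2$ del Pezzo $G$-isomorphic to $S$. Because $p$ is $G$-fixed and $\beta$ commutes with $G$, the link is automatically $G$-equivariant, and it is a genuine (non-biregular) element of $\Bir^G(S)$ exactly when $Z$ is a del Pezzo surface of degree $1$. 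Writing $\pi\colon S\to\PP^2$ for the anticanonical double cover with branch quartic $B$, the surface $Z$ is del Pezzo iff $p$ lies on no $(-1)$-curve and no anticanonical member is singular at $p$, which translates into $\pi(p)\notin B$ and $\pi(p)$ lying off the $28$ bitangents of $B$. In particular $\beta_p$ exists only if $p$ is not a ramification point, i.e.\ only if $p$ is not fixed by the Geiser involution $\gamma$.

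This immediately settles the case $\gamma\in G$: every $G$-fixed point then lies on the ramification curve $R=\operatorname{Fix}(\gamma)$, so $\operatorname{Bl}_pS$ carries a $(-2)$-curve and is never del Pezzo, no valid Bertini link exists, and $S$ is superrigid. For the remaining case $\gamma\notin G$ the group injects as $\bar G\hookrightarrow\Aut(\PP^2,B)\subset PGL_3(\C)$, and a $G$-fixed point off $R$ forces $\bar G$ to fix a point $\bar p\in\PP^2\setminus B$ lying on none of the bitangents of $B$. The genuine obstacle is to rule this out for every non-cyclic minimal $G$: one runs through the Dolgachev--Iskovskikh classification of minimal non-cyclic groups acting on degree-$2$ del Pezzo surfaces and checks that each such $\bar G$ either has no fixed point in $\PP^2$ (e.g.\ when its $GL_3$-lift is irreducible), or its fixed points necessarily lie on $B$ or on a bitangent. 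One must also dispose of non-canonical centres at infinitely near points, which I would handle by applying the same multiplicity bound on the blow-up together with the absence of $G$-fixed tangent directions when $G$ acts irreducibly on $T_pS$. Combining the two cases shows that no non-biregular $G$-birational self-map exists, proving $G$-birational superrigidity.
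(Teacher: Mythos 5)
Your reduction is exactly the one the paper uses: the Noether--Fano inequality plus the intersection bound $2n^2\ge |O|\,m^2>|O|\,n^2$ forces the non-canonical centre to be a single $G$-fixed point, and the only available untwisting is a Bertini involution centred at a fixed point $p$ that lies neither on the ramification curve nor on a $(-1)$-curve (this is Lemma \ref{bertinibasepoint} in the paper). Your observation that $\gamma\in G$ immediately kills all candidates is also fine. The worry about infinitely near centres is unnecessary: Lemma \ref{multiplicity-inequality} shows by induction on the blow-ups that any non-canonical $G$-pair already has an orbit on $S$ itself with multiplicity $>1$, so no separate argument about tangent directions is needed.

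However, there is a genuine gap at the decisive step. In the case $\gamma\notin G$ you write that ``one runs through the Dolgachev--Iskovskikh classification \ldots and checks'' that every fixed point of $\bar G$ lies on $B$ or on a bitangent --- but this check is precisely the content of the theorem, and you neither perform it nor cite a result that performs it. The paper closes this gap not by redoing the case analysis but by invoking the classification of \emph{fixed points} of minimal non-cyclic actions due to Dolgachev and Duncan \cite[Theorem 1.1, cases 2A and 2B]{DD}: for a minimal non-cyclic $G$ acting on a Del Pezzo surface of degree $2$ with a fixed point, that fixed point lies either on the ramification curve or at the intersection of four $(-1)$-curves. Combined with Lemma \ref{bertinibasepoint} this rules out every Bertini centre at once, for both the $\gamma\in G$ and $\gamma\notin G$ cases simultaneously. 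Without that citation (or an explicit verification for each group in the list), your argument establishes only the reduction, not the theorem.
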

\begin{proof}
    Section \S \ref{Gbirationalsuperrigiditynoncyclicgroupdelpezzodegree2}.
\end{proof}

\begin{theorem}\label{Gbirationalsuperriddegree2}
    Let $G$ be a cyclic group and $S$ be a minimal Del Pezzo $G$-surface of degree $2$. Then $S$ is $G$-birationally superrigid if and only if $G$ is one of the following: 
    \begin{enumerate}
        \item group of order $2$ of type $A_1^7$;
        \item group of order $6$ of types $E_7(a_4), A_5 + A_1, D_6(a_2)+A_1$; 
        \item group of order $14$ of type $E_7(a_1)$;
        \item group of order $18$ of type $E_7$.
    \end{enumerate}
    Moreover, if $S$ is not $G$-birationally superrigid, then $G$ is isomorphic to one of the following:
\begin{enumerate}
    \item $G$ is a cyclic group of order $4$ of type $2A_3+A_1$. The group $\Bir^G(S)$ is generated by infinitely many Bertini involutions whose base loci lie in the unique $G$-fixed nonsingular curve of genus one and by a subgroup of $\Aut(S)$ isomorphic to $2 \times 4^2\rtimes 2$, if $S$ is of type $II$, or by $\Aut(S)$ itself otherwise.
    \item $G$ is a cyclic group of order $12$ of type $E_7(a_2)$. The group $\Bir^G(S)$ is generated by two Bertini involutions and by a subgroup of $\Aut(S)$ isomorphic to $2 \times 12$.
\end{enumerate}
\end{theorem}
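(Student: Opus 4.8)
The plan is to follow the strategy used for Theorem~\ref{Gbirationalsuperrigidcubic} in degree $3$, namely a $G$-equivariant Noether--Fano--Iskovskikh analysis carried out over the Dolgachev--Iskovskikh classification of cyclic minimal actions on a degree $2$ Del Pezzo surface. First I would list the conjugacy classes of cyclic $G=\langle g\rangle$ acting minimally on $S$: the generator $g$ acts on $\Pic(S)\cong\Z^{8}$ fixing $K_S$, and minimality, i.e. $\Pic^G(S)=\Z K_S$, is equivalent to the absence of a nonzero $g$-invariant vector in the root lattice $K_S^{\perp}\cong E_7$. This isolates exactly the Carter types in the statement together with $2A_3+A_1$ and $E_7(a_2)$; in particular the unique minimal involution is $-\mathrm{id}$ on $E_7$, the Geiser involution, of type $A_1^7$. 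Since a degree $2$ surface is already $G$-birationally rigid by \cite[\S 7.3]{DI}, the equivariant Noether--Fano criterion forces any non-biregular $\chi\in\Bir^G(S)$ to have a maximal $G$-orbit of \emph{points} as base locus; no curve or divisorial centre may occur without contradicting rigidity. As the blow-up of a $G$-orbit of length $\ge 2$ on $S$ has degree $\le 0$ and is not a Mori fibre space, the only admissible centres are single $G$-fixed points.

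The elementary link attached to such a centre is a Bertini involution: blowing up a $G$-fixed point $p$ produces, away from the finitely many $(-1)$-curves, a Del Pezzo surface $\hat S$ of degree $1$, whose Bertini involution $\beta$ is central in $\Aut(\hat S)$ and hence commutes with the lift $\hat g$ of $g$; contracting $\beta(E)$ recovers a degree $2$ surface $G$-isomorphic to $S$, and the resulting $b_p\in\Bir^G(S)$ has base point $p$. The decisive geometric input is therefore, for each Carter type, the fixed locus of $g$ on $S$, which I would compute by combining the topological and holomorphic Lefschetz fixed-point formulas with the eigenvalues of $g^{*}$ read off from the type. This yields the dichotomy: if $g$ fixes a nonsingular anticanonical curve $C$ of genus one \emph{pointwise} (the type $2A_3+A_1$, where $\mathrm{tr}(g^{*}\vert_{E_7})=-3$ gives $\chi(\mathrm{Fix}\,g)=0$), then every $p\in C$ off the lines yields a distinct non-biregular $b_p$ and $\Bir^G(S)$ is infinite; if $g$ has only finitely many admissible fixed points (the type $E_7(a_2)$, giving exactly two), one obtains finitely many Bertini involutions; and if no fixed point yields a non-biregular link, $S$ is $G$-birationally superrigid.

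For the two non-superrigid types I would then identify $\Aut^G(S)$ with the centraliser of $g$ in $\Aut(S)$, obtaining $2\times 4^2\rtimes 2$ for the special surface of type $II$ (and $\Aut(S)$ in general) in the order $4$ case, and $2\times 12$ in the order $12$ case, and deduce that $\Bir^G(S)$ is generated by $\Aut^G(S)$ together with the listed Bertini involutions from the $G$-equivariant Sarkisov program, exactly as in Theorem~\ref{Gbirationalsuperrigidcubic}. Infiniteness of $\Bir^G(S)$ for $2A_3+A_1$ follows from the one-parameter family of distinct involutions $\{b_p\}_{p\in C}$.

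The main obstacle is the opposite verification, type by type, that the surfaces of orders $2,6,14,18$ admit \emph{no} admissible centre producing a new birational map, so that $\Bir^G(S)=\Aut^G(S)$. This is where the Noether--Fano multiplicity bookkeeping, the explicit configuration of the $56$ lines, and the precise structure of $\mathrm{Fix}(g)$ must be combined: one must show that for these generators the $G$-fixed points either fail the Del Pezzo condition for the degree $1$ blow-up, or yield Bertini involutions that untwist to elements of $\Aut^G(S)$. The genuinely delicate point is the Geiser case $A_1^7$, whose fixed curve has genus $3$ and class in $|-2K_S|$ rather than $|-K_S|$: here one has to argue that, in contrast with the pointwise-fixed anticanonical curve of type $2A_3+A_1$, the induced links do not enlarge $\Bir^G(S)$ beyond $\Aut^G(S)$. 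Once these exclusions are established and the two exceptional families are treated, the stated classification and the descriptions of $\Bir^G(S)$ follow.
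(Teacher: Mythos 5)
Your overall strategy coincides with the paper's: reduce via the equivariant Noether--Fano/Segre--Manin analysis (orbits of length $<2$, hence single $G$-fixed points, untwisted by Bertini involutions), run through the Dolgachev--Iskovskikh list of minimal cyclic actions, and compute normalisers for the two non-superrigid types. But there is a genuine gap at the step you yourself flag as the ``main obstacle'': you never state the criterion that decides whether a $G$-fixed point actually carries a Bertini involution. The paper's Lemma \ref{bertinibasepoint} shows that $p$ is the base locus of a Bertini involution if and only if $p$ lies neither on a $(-1)$-curve \emph{nor on the ramification curve of the anticanonical double cover} $\nu\colon S\to\PP^2$ (if $p$ is on the ramification curve, the pullback of the tangent line to the branch quartic is an anticanonical divisor singular at $p$, so the blow-up at $p$ is not Del Pezzo). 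With this in hand every superrigid case is immediate: for the types $A_1^7$, $E_7(a_4)$, $A_5+A_1$, $D_6(a_2)+A_1$, $E_7(a_1)$ and $E_7$ \emph{all} $G$-fixed points lie on the ramification curve (or are coordinate points on it), so no link exists at all. In particular the case $A_1^7$ you single out as ``genuinely delicate'' is the easiest one --- the fixed locus of the Geiser involution is exactly the ramification curve, and there is nothing to untwist; your proposed argument that the induced links ``do not enlarge $\Bir^G(S)$'' addresses a situation that does not arise. Conversely, for $E_7(a_2)$ the substantive work is the explicit verification that the two fixed points off the ramification curve avoid all $56$ $(-1)$-curves (equivalently, that no bitangent of the branch quartic passes through their common image), which your sketch asserts but does not carry out.

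Two smaller points. First, your Lefschetz computation of the fixed loci is a legitimate alternative to the paper's use of the explicit equations from \cite[\S 6.6]{DI}, but $\chi(\operatorname{Fix} g)=0$ alone does not identify the fixed locus as a pointwise-fixed anticanonical genus-one curve in type $2A_3+A_1$; the paper instead reads off directly that $S\cap\{t_2=0\}$ is fixed and that $\{t_2=0\}$ is not a bitangent of the branch quartic. Second, $\Aut^G(S)$ is the \emph{normaliser} $N_{\Aut(S)}(G)$, not the centraliser of $g$ (the paper's notion of $G$-map allows $\varphi\circ\rho(g)=\rho(g')\circ\varphi$ for some $g'\in G$); in the type $II$ case the element $\sigma\tau$ conjugates $g$ outside $G$, which is exactly how the proper subgroup $2\times 4^2\rtimes 2$ of $\Aut(S)\simeq 2\times(4^2\rtimes S_3)$ arises.
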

\begin{proof} Section \S \ref{section $G$-birational superrigidity for cyclic groups 2}.
\end{proof}

\begin{corollary}
Let $G$ be a cyclic group and $S$ be a minimal Del Pezzo $G$-surface of degree smaller than $3$. Then, $S$ is $G$-birationally superrigid if and only if the group $\Bir^G(S)$ of birational $G$-automorphisms is finite.
\end{corollary}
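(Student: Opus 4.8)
The plan is to prove the two implications separately; the forward one is formal, while the converse rests on the classification recorded above.

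Suppose first that $S$ is $G$-birationally superrigid. By definition $\Bir^G(S)=\Aut^G(S)$, and the latter, being the normaliser of $\rho(G)$ inside $\Aut(S)$, is a subgroup of $\Aut(S)$. It is classical that the automorphism group of a Del Pezzo surface of degree at most $5$ is finite; for degree $\le 2$ this follows from the faithful action of $\Aut(S)$ on $\Pic(S)$ preserving $K_S$ and the intersection form, which embeds $\Aut(S)$ with finite kernel into the finite Weyl group $W(E_{9-\deg S})$. Hence $\Bir^G(S)$ is finite. This half requires no case analysis and holds in every degree $\le 2$.

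For the converse I would argue the contrapositive: if $S$ is not $G$-birationally superrigid, then $\Bir^G(S)$ is infinite. First note that a minimal Del Pezzo $G$-surface of degree $1$ is always $G$-birationally superrigid: blowing up a $G$-orbit drops $K_S^2$ to $0$ and destroys the Mori fibre space structure, so the Noether--Fano--Iskovskikh analysis leaves no room for an untwisting link, and the only $G$-equivariant self-links are the biregular Bertini involutions; thus $\Bir^G(S)=\Aut^G(S)$. Consequently the hypothesis ``not superrigid'' forces $\deg S = 2$, and by Theorem~\ref{Gbirationalsuperriddegree2} the group $G$ is then cyclic of type $2A_3+A_1$ (order $4$) or of type $E_7(a_2)$ (order $12$).

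In the case $2A_3+A_1$, Theorem~\ref{Gbirationalsuperriddegree2} presents $\Bir^G(S)$ as generated by the Bertini involutions whose base loci run over the unique $G$-fixed genus-one curve; distinct base loci give distinct birational maps, so the generating set, and hence $\Bir^G(S)$, is infinite. The case $E_7(a_2)$ is the main obstacle, since there $\Bir^G(S)$ is generated by a finite subgroup of $\Aut(S)$ together with only two non-biregular Bertini involutions $\beta_1,\beta_2$, and infinitude is not automatic. I would establish it by showing that $\beta_1\beta_2$ has infinite order: on a common resolution $\tilde S$ of the two involutions the lattice $\Pic(\tilde S)$ has hyperbolic signature, each $\beta_i$ induces an isometric involution not realised by any biregular automorphism, and the product of the two induced isometries has infinite order---concretely, the $(-K_S)$-degree of $(\beta_1\beta_2)^n$ grows without bound in $n$, so these maps are pairwise distinct. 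Combining the three cases yields the contrapositive, and together with the forward implication this proves the equivalence; the only genuine computation is the infinite-order verification in the $E_7(a_2)$ case, every other case being either formal or already recorded in Theorem~\ref{Gbirationalsuperriddegree2}.
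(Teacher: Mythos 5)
Your overall architecture matches the paper's: the forward implication is formal (superrigidity gives $\Bir^G(S)=\Aut^G(S)$, a subgroup of the finite group $\Aut(S)$), degree $1$ is always superrigid, and for degree $2$ everything reduces via Theorem \ref{Gbirationalsuperriddegree2} to the two non-superrigid types. Your observation that in type $2A_3+A_1$ the infinitely many pairwise distinct Bertini involutions already force $\Bir^G(S)$ to be infinite is correct and is all the paper needs there. (Note also that the paper's own proof additionally cites the degree-$3$ Lemmas \ref{finiteBirE6(a1)} and \ref{finiteE6}, suggesting the corollary is really intended for degree $\le 3$; in that reading you would need the analogous infinite-order verifications for the cubic types $E_6(a_1)$ and $E_6$ as well.)

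The genuine gap is in the $E_7(a_2)$ case. You assert that $\beta_1\beta_2$ has infinite order because ``the product of the two induced isometries has infinite order'' and ``the $(-K_S)$-degree of $(\beta_1\beta_2)^n$ grows without bound,'' but neither claim follows from the hyperbolic signature of $\Pic(\tilde S)$: a product of two involutive isometries of a hyperbolic lattice can perfectly well be elliptic of finite order. In the situation at hand the two base points $p_1,p_2$ are exactly the base locus of the anticanonical pencil $C_{(\lambda:\mu)}$, both involutions preserve this pencil fibrewise, and on each smooth fibre the composition acts as translation by $4p_2$ in the group law; hence $\beta_1\beta_2$ is parabolic or elliptic, its order is finite precisely when the relevant Mordell--Weil element is torsion, and in that case the degree of $(\beta_1\beta_2)^n$ stays bounded. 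Nothing in your argument excludes this. That is exactly why the paper's Lemma \ref{finiteE7a14} performs an explicit computation: it exhibits a fibre on which $p_2$ is not a torsion point (checked with MAGMA) and only then concludes infinite order. The ``only genuine computation'' you defer is precisely the step your lattice-theoretic mechanism cannot supply, so as written the converse is not proved in the $E_7(a_2)$ case.
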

\begin{proof}
    It is an immediate corollary of Theorems \ref{Gbirationalsuperrigidcubic} and \ref{Gbirationalsuperriddegree2}. In particular, see Lemmas
    \ref{finiteBirE6(a1)}, \ref{finiteE6} and \ref{finiteE7a14}. The authors are not aware of a proof that does not rely on the above classification.
\end{proof}

In the paper we also provide explicit equations for the listed Del Pezzo surfaces $S$ and the generators of the group $\Bir^G(S)$, unless it coincides with $\Aut^G(S)$. The types of the $G$-surfaces appearing in Theorem \ref{Gbirationalrigid3}, \ref{Gbirationalsuperrigidcubic} and \ref{Gbirationalsuperriddegree2} are described in full details in Lemma \ref{Normalisernoncycliccase3}, Proposition \ref{listGcycliccubicsurface} and Proposition \ref{listcyclicgroupDelPezzosurface2}. 
For convenience, we summarise the contents of Theorem \ref{Gbirationalrigid3}, \ref{Gbirationalsuperrigidcubic} and \ref{Gbirationalsuperriddegree2} in Table \ref{tablecubic} and \ref{tableDelPezzo2}.
    \begin{table}[h]
    \centering
    {\setlength{\extrarowheight}{2pt}\small
    \begin{adjustwidth}{-0.25 in}{}
	\begin{tabular}{c@{\hskip 10pt}c@{\hskip 10pt}c@{\hskip 10pt}c@{\hskip 10pt}c@{\hskip 10pt}c@{\hskip 10pt}c}
	\hline	
	Type of $G$  & $G$  & Type of $S$ &   Equation of $S$	        & $\Aut^G(S)$   & \begin{tabular}{@{}@{}c@{}c@{}@{}@{}}  Geiser \\invol. \end{tabular}& \begin{tabular}{@{}@{}c@{}@{}@{}}  Bertini \\invol. \end{tabular}  \\[1ex]
		\hline  
	$3A_2$  & $3$		& 	I    & 	 	$t_0^3 + t_1^3 + t_2^3 + t_3^3$ &		$3^3 \rtimes S_3 $		& $\infty $	& $0$	 \\
		\hline
     $3A_2$   & $3$ &  III      & $t_0^3 + t_1^3 + t_2^3 + t_3^3 + 6at_1t_2t_3$&         $H_3(3)\rtimes 4$               &      $\infty$     &   $0$          \\  
		    &               &    &     $20a^3+8a^6 =1$                               &                               &           &        \\[0.5ex]
		 \hline
    $3A_2$   &	$3$  & IV      & $t_0^3 + t_1^3 + t_2^3 + t_3^3 + 6at_1t_2t_3$&         $H_3(3)\rtimes 2$               &      $\infty$     &   $0$       \\  
		    &               &      &   $20a^3+8a^6 \neq 1, \> 8a^3 \neq 1 $                               &                               &           &         \\
		      &               &    &     $a-a^4\neq 1 $                               &                               &           &         \\[0.5ex]
		  \hline
    $E_6(a_2)$ & $6$   & I & 	 	$t_0^3 + t_1^3 + t_2^3 + t_3^3$ &	$3^2 \times 2$	
    & $0$   &  $\infty$	 \\
		\hline
	$E_6(a_2)$ & $6$ & III        & $t_0^3 + t_1^3 + t_2^3 + t_3^3 + 6at_1t_2t_3$&         $H_3(3)\rtimes 4$ 
	&      $0$   &  $\infty$             \\  
		    &               &    &     $20a^3+8a^6 =1$                               &                               &           &        \\[0.5ex]
		 \hline
  $E_6(a_2)$ & $6$ & IV      & $t_0^3 + t_1^3 + t_2^3 + t_3^3 + 6at_1t_2t_3$&         $H_3(3)\rtimes 2$ 
  &      $0$   &  $\infty$       \\  
		    &               &     &    $20a^3+8a^6 \neq 1, \> 8a^3 \neq 1 $                               &                               &           &         \\
		      &               &    &     $a-a^4\neq 1 $                               &                               &           &         \\[0.5ex]

		  \hline
	       $E_6(a_1)$   & $9$    &  I      & $t^2_3t_1 + t_1^2t_2 + t_2^2t_3 + t_0^3$ &         $D_{18}$               &      $3$     &   $0$       \\[0.5ex]
	    \hline
	      $E_6$   & $12$   &    III    & $t^2_3t_1 + t_2^2t_3 + t_0^3 + t_1^3$   &         $12$               &      $1$     &   $1$       \\[0.5ex]
	      \hline
        & $S_3$ &  VI      & $t_0^3 + t_1^3 + t_2^3 + t_3^3 + at_0t_1(t_2+t_3)$&         $S_3 \times 2$               &      $2$     &   $0$          \\  
		    &               &    &     $a \neq 0$                               &                               &           &        \\[0.5ex]
	  \hline
        & $S_3$ &  III-IV     & $t_0^3 + t_1^3 + t_2^3 + t_3^3 + t_0t_1(at_2+bt_3)$&         $S_3 \times 3$              &      $3$     &   $0$          \\  
		    &               &  V-VIII  &     $ a^3\neq b^3\neq 0$,                                &                $S_3$               &           &        \\[0.5ex]
	  \hline
	\end{tabular}
	\end{adjustwidth}}
	\vspace{10pt}
	\caption{Minimal Del Pezzo surface $G$-surface of degree $3$ which are not $G$-birationally superrigid.}\label{tablecubic}
	\end{table}
	\begin{table}[h]
	\centering
    {\setlength{\extrarowheight}{2pt}\small
	\begin{tabular}{c@{\hskip 10pt}c@{\hskip 10pt}c@{\hskip 10pt}c@{\hskip 15pt}c@{\hskip 10pt}c}
	\hline	
	   Type of $G$ & $G$ &  Type of $S$  &  Equation	of $S$       & $\Aut^G(S)$   & \begin{tabular}{@{}@{}c@{}@{}@{}}  Bertini \\invol. \end{tabular} \\[1ex]
		\hline  
		$2A_3 + A_1$  & $4$  &  II 	& 	$t_3^2 + t_2^4 + t_0^4 + t_1^4$				                & $ 2 \times 4^2 \rtimes 2$	& $\infty$	 \\[0.5ex]
	\hline
	  $2A_3 + A_1$	& $4$ & III	& 	$t_3^2 + t_2^4 + t_0^4 + 2\sqrt{3}it_0^2t_1^2 + t_1^4$		&  $2 \times 4A_4$  &  $\infty$  \\[0.5ex]

	\hline
	$2A_3 + A_1$  &	$4$	&  V & 	  $t_3^2 + t_2^4 + t_0^4 + at_0^2t_1^2 + t_1^4$ 		        & $2 \times AS_{16}$	& $\infty$	 \\
    	&	    &  &	 	 $ a^2 \neq 0,-12,4,36$			& 	& 	 \\[0.5ex]
		\hline
		
	$E_7(a_2)$     & $12$ 	& III  &  $t_3^2 + t_0^4 + t_1^4 + t_0t_2^3$		            		                                        	&  $2 \times 12$  &  $2$  \\
	\hline
	\end{tabular}}
	\vspace{10pt}
	\caption{Minimal Del Pezzo surface $G$-surface of degree $2$ which are not $G$-birationally superrigid.}\label{tableDelPezzo2}
	\end{table}
	
	The structure of the paper is as follows: in \S \ref{G-invariant Segre-Manin} we rewrite in full details the proof of the $G$-equivariant version of the above-mentioned Segre-Manin theorem, see Theorem \ref{GequivariantSegreManin}. Note that the statement is essentially proved in \cite[Corollary 7.11]{DI}. Building on this result, we classify the minimal Del Pezzo $G$-surfaces of degree $3$ and $2$ which are not $G$-birationally superrigid in \S \ref{G-birational superrigidity of cubic surfaces} and \S\ref{G-birational superrigidity of Del Pezzo surfaces of degree 2} respectively.

\newpage \textbf{Acknowledgement.} We would like to express our gratitude to Ivan Cheltsov for suggesting the problem in the occasion of the summer school on \textit{Rationality, stable rationality and birationally rigidity of complex algebraic varieties} held in Udine in September 2017. Most of the lemmas in \S \ref{G-invariant Segre-Manin} were solved during the exercise classes in Udine. We would like to thank Fabio Bernasconi, Dami\'{a}n Gvirtz, Costya Shramov and Christian Urech for useful discussions. We would like to thank also our advisors Vladimir Guletski{\u\i}  and Paolo Cascini. We are grateful to the anonymous referee for its helpful suggestions.

The first named author was supported by the Brazilian National Council for Scientific and Technological Development (CNPq). The second named author was supported by the Engineering and Physical Sciences Research Council
[EP/L015234/1], The EPSRC Centre for Doctoral Training in Geometry and Number Theory
(The London School of Geometry and Number Theory), University College London and
Imperial College, London.

\section{Preliminaries}
Let $S$ be a nonsingular surface. A linear system $\cM$ on $S$ is \textbf{mobile} if its fixed locus does not contain any divisorial component. The pair $(S, D+\cM)$ is the datum of a nonsingular surface $S$, a $\Q$-divisor $D$ whose coefficient are smaller than 1 and a mobile linear system $\cM$, or equivalently one of its general members. 
Let $\alpha: \tilde{S} \to S$ be a birational morphism.
For each prime divisor $E_i$ of $\tilde{S}$ there exists a coefficient $a(E_i,S,D+ \cM)$, called \textbf{discrepancy}, such that the following relation holds:
\[K_{\tilde{S}} + \alpha_*^{-1}(D) + \alpha_*^{-1}(\cM)\sim_{\Q}  \alpha^*(K_S+D+\cM) + \sum_i a(E_i, S, D+\cM)E_i.\]
In particular, observe that the multiplicity $\mult_p(\cM)$ of $\cM$ at a point $p \in S$ equals $1-a(E, S, \cM)$, where $E$ is the exceptional divisor of the blow-up of $S$ at $p$. 

\noindent A pair $(S, D+\cM)$ is \textbf{canonical} if $a(E, S, D+\cM)\geq 0$ for any exceptional divisor $E$ and for any $f: \tilde{S} \rightarrow S$ birational morphism. A pair $(S, D+\cM)$ is called \textbf{log Calabi-Yau} if $K_S + D+ \cM \sim_{\Q} 0$.

Let $G$ be a finite group of automorphisms acting effectively on a surface $S$. In the introduction we have already recalled the definition of a $G$-rational map. This concept must not be confused with that of a \textbf{$G$-equivariant map}, \emph{i.e.} a birational map which makes the following diagrams commute
\[
\xymatrix{
S \ar@{-->}[r]^-\varphi \ar[d]_-{g} & S' \ar[d]^-{g}\\
S \ar@{-->}[r]_-\varphi & S'
}
\]
for every $g \in G$.

The \textbf{degree} $d$ of a Del Pezzo surface $S$ is defined to be the self-intersection number of the canonical class $K_S$, in symbols $d:=K_S^2$. We briefly recall some properties of Del Pezzo surfaces of degree $\leq 3$, see for instance \cite[Chapter III, Theorem 3.5]{K}. 
\begin{enumerate}
    \item A Del Pezzo surface $S$ of degree $1$ is a nonsingular hypersurface of degree $6$ in the weighted projective space $\PP(1,1,2,3)$, embedded via the third pluricanonical linear system $|-3K_S|$. Via the linear system $|-2K_S|$, $S$ can be realised as a double cover of the singular quadric $\PP(1,1,2)$ branched along a nonsingular sextic curve. In particular, since the double cover is canonical, its deck transformation $\tau$ is a central element in the group of automorphisms $\Aut(S)$, see also \cite[\S 6.7.]{DI}.
    \[
\xymatrix{
S \, \ar@{^{(}->}[r]^{\varphi_{|-3K_S|} \qquad} \ar[d]^{2:1}_{\varphi_{|-2K_S|}} & \PP(1,1,2,3)\\
\PP(1,1,2). &
}
\]
    \item A Del Pezzo surface $S$ of degree $2$ is a nonsingular hypersurface of degree $4$ in the weighted projective space $\PP(1,1,1,2)$, embedded via the second pluricanonical linear system $|-2K_S|$. Via the canonical map, $S$ can be realised as a double cover of $\PP^2$ branched along a nonsingular quartic curve. In particular, since the double cover is canonical, its deck transformation $\tau$ is a central element in the group of automorphisms $\Aut(S)$, see also \cite[\S 6.6.]{DI}.
    
    \[
\xymatrix{
S \, \ar@{^{(}->}[r]^{\varphi_{|-2K_S|}\qquad } \ar[d]^{2:1}_{\varphi_{|-K_S|}} & \PP(1,1,1,2)\\
\qquad \PP^2.\quad &
}
\]

\item A Del Pezzo surface $S$ of degree $3$ is a nonsingular hypersurface of degree $3$ in the projective space $\PP^3$, embedded via the anticanonical linear system $|-K_S|$.
\[
\xymatrixcolsep{30pt}\xymatrix{
S \, \ar@{^{(}->}[r]^{\varphi_{|-K_S|}} &\PP^3.\\
}
\]
\end{enumerate}

\section{$G$-equivariant Segre-Manin theorem}\label{G-invariant Segre-Manin}

In this section we present the proof, essentially due to Dolgachev and Iskoviskikh, of the following $G$-equivariant version of a classical arithmetic theorem by Segre \cite{S} and Manin \cite{M}.

\begin{theorem}[$G$-equivariant Segre-Manin theorem]\cite[\S 7.3]{DI}\label{GequivariantSegreManin}
	Every minimal Del Pezzo $G$-surface $S$ of degree $d \le 3$ is $G$-birationally rigid.
\end{theorem}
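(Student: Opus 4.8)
The plan is to run the $G$-equivariant Noether--Fano--Iskovskikh method, pulling a mobile system back from the target and descending on its anticanonical degree until the comparison map becomes biregular. Let $\varphi \colon S \dashrightarrow S'$ be a $G$-birational map to a minimal $G$-surface $S'$, which by \cite[Theorem 3.8]{DI} is either a Del Pezzo surface with $\Pic^G(S')\simeq \Z$ or a conic bundle with $\Pic^G(S')\simeq \Z^2$. In either case $S'$ carries a $G$-invariant mobile linear system $\cM'$ that is (relatively) very ample; I would set $\cM := \varphi_*^{-1}\cM'$ to be its strict transform on $S$. Since $\cM$ is $G$-invariant and $\Pic^G(S)\otimes \Q = \Q\langle K_S\rangle$ is one-dimensional with $-K_S$ ample, we get $\cM \sim_{\Q} -nK_S$ for a positive rational number $n$, whence $\cM^2 = n^2 K_S^2 = n^2 d$. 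The equivariant Noether--Fano criterion then applies: if $\varphi$ is not a $G$-isomorphism, the log pair $(S, \tfrac1n\cM)$ fails to be canonical, and because $\cM$ and $K_S$ are $G$-invariant the non-canonical locus is a $G$-invariant subscheme.

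Next I would locate and bound this locus. As $\cM$ is mobile it has no fixed divisorial component, so $\mult_C \cM = 0$ along every curve $C$, forcing the center of non-canonicity to be a point or an infinitely near point. Choosing a single $G$-orbit $\Sigma$ inside this locus, all of its points share a common multiplicity $\mult_p\cM > n$ by $G$-invariance, and intersecting two general members of $\cM$ gives
\[
n^2 d = \cM^2 \;\ge\; \sum_{p\in\Sigma}\bigl(\mult_p\cM\bigr)^2 \;>\; \abs{\Sigma}\, n^2,
\]
with the analogous refined estimate along each chain when $\Sigma$ consists of infinitely near points. Hence $\abs{\Sigma} < d \le 3$, so $\abs{\Sigma} \le 2$; in degree $1$ no such orbit can exist at all, and the pair is automatically canonical, giving $G$-birational rigidity (indeed superrigidity) at once in that case.

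For degrees $2$ and $3$ the surviving orbits have size $1$ or $2$, and for each I would construct a $G$-equivariant \emph{elementary link} untwisting $\cM$: blowing up $\Sigma$ and running a $G$-equivariant minimal model program produces a Sarkisov link which, in these low degrees, is a birational self-map $\sigma\colon S\dashrightarrow S$ — of Geiser-- or Bertini--type for the point orbits in degree $2$, and of projection type for the orbits of size $\le 2$ on the cubic. The decisive feature is that such a link returns to $S$ itself rather than to a different minimal model, so replacing $\varphi$ by $\varphi\circ\sigma^{-1}$ leaves $S'$ unchanged while strictly decreasing $n$. Descending on $n$, the procedure terminates exactly when $(S,\tfrac1n\cM)$ becomes canonical, which forces the residual map $S\dashrightarrow S'$ to be a $G$-isomorphism; therefore $S'\simeq_G S$ and $S$ is $G$-birationally rigid.

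I expect the main obstacle to be the geometric case analysis in the last paragraph: verifying that at each admissible $G$-orbit the blow-up followed by the equivariant two-ray game genuinely returns to $S$ and strictly lowers $n$, so that no $G$-birational map can escape to a genuinely different minimal model (in particular to a conic bundle). This amounts to controlling the position of the one or two points relative to the $(-1)$-curves and to $-K_S$, and to checking that minimality $\Pic^G(S)\simeq \Z$ is restored after contracting the link. Establishing the equivariant Noether--Fano inequality with the correct bookkeeping for infinitely near centers is the remaining technical point, but it is standard; it is the untwisting step where the degree restriction $d\le 3$ is genuinely used.
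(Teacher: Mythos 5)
Your proposal follows essentially the same route as the paper: Noether--Fano to produce a non-canonical $G$-invariant mobile pair, the intersection bound $n^2 d > \abs{\Sigma}n^2$ to force an orbit of length $<d$, untwisting by the Geiser/Bertini involution obtained from the central deck transformation on the blown-up Del Pezzo surface of degree $1$ or $2$, and descent on $n$. The "main obstacle" you flag is handled in the paper by two short observations: the multiplicity condition $\mult_p\cM>n$ alone guarantees via Nakai--Moishezon that the blow-up is a Del Pezzo surface (no case analysis on $(-1)$-curves is needed for rigidity, only later for superrigidity), and writing $\tau^*H\sim aH+bE$ with $a-c=1$, $c>0$ gives $\sigma^{-1}(\cM)\sim_\Q-(n-c(m-n))K_S$, so the degree strictly drops.
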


The main ingredients of the proof are Noether-Fano inequalities, which in modern language recast the failure of birational superrigidity in terms of the existence of a non-canonical log Calabi-Yau pair.

\begin{theorem}[Noether-Fano inequalities]\cite[Theorem 3.2.1(ii), Theorem 3.2.6(ii)]{CS}
\label{noether-fano}
    Let $G$ be a finite group, $S$ and $S'$ be two minimal $G$-surfaces and  $\varphi:S \dashrightarrow S'$ be a $G$-birational map. Suppose $S$ is a minimal Del Pezzo surface and let $\cM$ be a $G$-invariant mobile linear system on $S$ defined in the following way:
    
    \begin{enumerate}
        \item if $S'$ is a Del Pezzo $G$-surface, then $\cM:= \varphi^{-1}_*\left(|H| \right)$ is the strict transform via $\varphi^{-1}$ of the linear system $|H|$, where $H$ is a very ample multiple of $-K_{S'}$;
        
        \item if $\psi: S' \rightarrow C$ is a $G$-conic bundle and $H$ is a very ample $G$-invariant divisor of $C$, then $\cM:= \varphi^{-1}_*\left(|\psi^*(H)|\right)$ is the strict transform via $\varphi^{-1}$ of the linear system $|\psi^*(H)|$.
    \end{enumerate}
    
    Then, there exists a positive rational number $\lambda$ such that 
    \[
    K_S + \lambda \cM \sim_\Q 0
    \]
    and the following propositions hold:
    \begin{enumerate}
        \item if $S'$ is a Del Pezzo surface and $(S,\lambda \cM)$ is canonical, then $\varphi$ is biregular.
        
        \item if $S'$ is a conic bundle, then $(S, \lambda \cM)$ is not canonical.
    \end{enumerate}
\end{theorem}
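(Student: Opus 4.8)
The plan is to resolve $\varphi$ by one $G$-equivariant common resolution and to read both conclusions off discrepancies on it; the decisive input is that $\Pic^G(S)$ has rank one. First I would construct $\lambda$, which settles the first assertion in both cases at once: since $S$ is a minimal Del Pezzo $G$-surface, $\Pic^G(S)\simeq\Z$ and its generator spans the ray of the ample class $-K_S$, so the $G$-invariant mobile class $\cM$ equals $\mu(-K_S)$ in $\Pic^G(S)\otimes\Q$ with $\mu=\bigl(\cM\cdot(-K_S)\bigr)/(-K_S)^2>0$ (as $\cM$ is nef and nonzero and $-K_S$ is ample); putting $\lambda:=1/\mu>0$ gives $K_S+\lambda\cM\sim_\Q 0$.

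Next I would fix a $G$-equivariant resolution $p\colon W\to S$, $q\colon W\to S'$ of $\varphi$, set $\cM'=\varphi_*\cM$ and let $\tilde\cM=p^{-1}_*\cM=q^{-1}_*\cM'$ be the common strict transform; since $|H|$ and $|\psi^*H|$ are base-point-free one has $\tilde\cM=q^*\cM'$, and $K_S+\lambda\cM\sim_\Q 0$ yields the crepant identity
\[
K_W+\lambda\tilde\cM\;\sim_\Q\;\sum_{E}a(E,S,\lambda\cM)\,E
\]
over the $p$-exceptional divisors, so that $(S,\lambda\cM)$ is canonical exactly when all these coefficients are $\ge 0$. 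In case (2) I would put $\pi=\psi\circ q\colon W\to C$, so $\tilde\cM=\pi^*H$; a general fibre $f$ of $\pi$ is a smooth rational curve with $f^2=0$ and $K_W\cdot f=-2$ by adjunction, whence $\tilde\cM^2=0$ and $K_W\cdot\tilde\cM=-2\deg H<0$. Intersecting the crepant identity with the nef class $\tilde\cM$ then gives $-2\deg H=\sum_E a(E,S,\lambda\cM)(\tilde\cM\cdot E)$ with every $\tilde\cM\cdot E\ge 0$; were the pair canonical the right-hand side would be $\ge 0$, a contradiction, so $(S,\lambda\cM)$ is not canonical.

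In case (1), $H=-nK_{S'}$ with $n>0$, so $K_{S'}+\tfrac1n\cM'\sim_\Q 0$ and, $S'$ being smooth with $\cM'$ base-point-free, the pair $(S',\tfrac1n\cM')$ is terminal (discrepancies $a(F,S')\ge 1$). Subtracting the two crepant expressions for $K_W+\lambda\tilde\cM$ gives $(\lambda-\tfrac1n)\tilde\cM\sim_\Q\sum_E a(E,S,\lambda\cM)E-\sum_F a(F,S')F$; pairing this with the nef class $\tilde\cM=q^*H$ (which annihilates the $q$-exceptional $F$) forces $\lambda\ge 1/n$ from canonicity, and pairing with the nef class $p^*(-K_S)$ (which annihilates the $p$-exceptional $E$) forces $\lambda\le 1/n$ from $a(F,S')>0$; hence $\lambda=1/n$. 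Then the $q$-side identity $K_W+\tfrac1n\tilde\cM\sim_\Q\sum_F a(F,S')F$ pushes forward under $p$ to $p_*\bigl(\sum_F a(F,S')F\bigr)\sim_\Q K_S+\lambda\cM\sim_\Q 0$; being effective and $\Q$-trivial on the projective surface $S$ it vanishes, forcing $p_*F=0$ for each $q$-exceptional $F$. Thus every $q$-exceptional divisor is $p$-exceptional, so $\varphi^{-1}$ has no base points and is a birational $G$-morphism $S'\to S$; by minimality of the $G$-surface $S'$ it is an isomorphism and $\varphi$ is biregular.

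The rank-one computation of $\lambda$ and the fibre adjunction of case (2) are routine; the crux is case (1). The main obstacle is to pin down $\lambda=1/n$ by combining canonicity of $(S,\lambda\cM)$ with terminality of $(S',\tfrac1n\cM')$ in the right two intersections, and then to convert this equality into the vanishing $p_*F=0$ that makes $\varphi^{-1}$ a morphism, which rests on the principle that an effective $\Q$-trivial divisor on a projective surface is zero. Throughout I must keep the resolution and all push-forwards $G$-equivariant, so that the minimality of the $G$-surface $S'$ is available to conclude.
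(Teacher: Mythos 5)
The paper does not actually prove this statement: Theorem \ref{noether-fano} is quoted from Cheltsov--Shramov \cite[Theorems 3.2.1 and 3.2.6]{CS} and used as a black box, so the only proof to compare yours with is the one in the cited source. Your argument is correct, and it is essentially that standard Noether--Fano untwisting proof reconstructed in the surface case. The rank-one computation of $\lambda$ via $\Pic^G(S)\otimes\Q=\Q\cdot[-K_S]$ is right (effectivity of $\cM$ already forces $\mu>0$; nefness is not needed). In case (2), intersecting the crepant identity with the nef class $\pi^*H$, using $(\pi^*H)^2=0$ and $K_W\cdot\pi^*H=-2\deg H<0$, is the standard contradiction; note that if one reads $\cM'$ as the complete system $|\psi^*H|$ its members need not be pullbacks of divisors on $C$, but only the numerical class enters, so this is harmless. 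In case (1), your two pairings --- with $q^*H$, which kills the $q$-exceptional side and uses canonicity of $(S,\lambda\cM)$, and with $p^*(-K_S)$, which kills the $p$-exceptional side and uses $a(F,S')\ge 1$ --- correctly pin down $\lambda=1/n$; pushing the $q$-side identity forward by $p$ and using that an effective $\Q$-trivial divisor on a projective surface vanishes then shows every $q$-exceptional divisor is $p$-exceptional. The one step you assert without justification is the implication that this makes $\varphi^{-1}$ a morphism: it requires Zariski's main theorem applied to the graph of $\varphi^{-1}$ (an indeterminacy point $x\in S'$ forces $p(q^{-1}(x))$ to contain a curve, \emph{i.e.} some $q$-exceptional curve not contracted by $p$), and this lemma deserves at least a citation or a line of proof. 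With that supplied, minimality of the $G$-surface $S'$ applied to the birational $G$-morphism $\varphi^{-1}$ finishes case (1) exactly as you say. In short, where the paper buys brevity by citation, your proof buys self-containedness, at the cost of having to include this resolution-of-indeterminacy fact explicitly.
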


\begin{proof}[Proof of Theorem \ref{GequivariantSegreManin}] Let $\varphi: S \dashrightarrow S'$ be a $G$-birational non-biregular map to a minimal $G$-surface $S'$. In order to prove that $S$ is $G$-birationally rigid we need to exhibit a $G$-birational map $\sigma: S\dashrightarrow S$ such that $\varphi \circ \sigma$ is a $G$-biregular map.

\textbf{Step 1} (non-canonical log Calabi-Yau pair)\textbf{.}  By Theorem \ref{noether-fano}, the existence of $\varphi$ is equivalent to the existence of a mobile $G$-invariant linear system $\cM$ on $S$ such that
\begin{enumerate}
    \item (log Calabi-Yau) $K_S + \lambda\cM \sim_\Q 0$;
    \item (not canonical singularities) the pair $(S, \lambda \cM)$ is not canonical.
\end{enumerate}
Since $K_S$ generates $\Pic^G(S)$ in degree $\leq 3$, we can suppose $\lambda = \frac{1}{n}$ for some $n \in \mathbb{N}$.

\textbf{Step 2} (orbit of length $\leq 3$)\textbf{.} The proof of Lemma \ref{multiplicity-inequality} implies that there exists a $G$-orbit $O$ contained in the non-canonical locus of the log Calabi-Yau pair $(S, \frac{1}{n} \cM)$ such that 
\[m:=\mult_p \cM > n \qquad \text{ for all points }p \in O.\]
Lemma \ref{lengthorbit} grants that the length of $O$ is strictly less than the degree $d$ of $S$.

\textbf{Step 3} (Geiser and Bertini involution)\textbf{.} By hypothesis, the degree of $S$ is at most $3$ and we are left with few possibilities:
\begin{enumerate}
    \item[Case 1.] \emph{$O$ consists of a single $G$-fixed point $p$ and the degree of $S$ is either $2$ or $3$}. Let $\pi: \tilde{S} \to S$ be the blow-up of $S$ at $p$ with exceptional divisor $E$. Then, the surface $\tilde{S}$ is a Del Pezzo surface of degree $1$ or $2$ if $S$ has degree $2$ or $3$ respectively (cf. Lemma \ref{DelPezzosurfacefromblow-up}), and it is endowed with a $G$-action via pullback of the $G$-action on $S$. These surfaces are endowed with a central $G$-invariant biregular involution $\tau$, which descends to a $G$-birational non-biregular involution $\sigma_1$ on $S$, named Bertini or Geiser involution respectively. The defined $G$-birational maps are collected in the following diagram:
    \[
    \xymatrix{
	\tilde{S} \ar[d]_-{\pi} \ar@{->}^{\tau}[r] & \tilde{S} \ar[d]^-{\pi} & \\
	S \ar@{-->}_{\sigma_1}[r] & S \ar@{-->}_{\varphi}[r] & S'.
    }
    \]
    Let $a,b,c,d$ be integers such that $\tau^*(H)\sim aH+bE$ and $\tau^*(E)\sim cH+dE$, where $H:=-\pi^*K_S$ is the pullback of the ample generator of $\Pic^G(S)$. 
    Then, we obtain that
    \[\sigma_1^{-1}(\cM)= \pi_*\tau^*\pi^{-1}_*\cM \sim_\Q \pi_*\tau^*(nH-mE)\sim_\Q-(an-cm)K_S.\]
    Note in particular that $c>0$, because $E$ is not $\tau$-invariant and so $\tau^*E$ is not contracted by $\pi$: by the ampleness of $-K_S$, we obtain
    \[0<(-K_S . \pi_*\tau^*E)=(H . \tau^*E)=c H^2.\]
    Since $\tau$ preserve the canonical class $K_{\tilde{S}}\sim -H+E$, we obtain also that $a-c=1$, so that 
    \[\sigma_1^{-1}(\cM)\sim_\Q -(an-cm)K_S=-(n-c(m-n))K_S < -nK_S.\]
    \item[Case 2.] \emph{$O$ consists of two points $p_1$ and $p_2$ and the degree of $S$ is $3$}. Analogously, the blow-up of $S$ at $p_1$ and $p_2$ is a Del Pezzo surface of degree $1$ endowed with a $G$-equivariant involution which descends to a non-biregular Bertini involution on $S$, denoted $\sigma_1$. 
\end{enumerate}
In all the cases, the Noether-Fano inequalities (cf. Lemma \ref{multiplicity-inequality}) force  $\sigma_1^{-1}(\cM)\sim_\Q -k_1 K_S$ with $k_1<n$.

\textbf{Step 4} (inductive step)\textbf{.} By Theorem \ref{noether-fano}, either $\varphi \circ \sigma_1$ is $G$-biregular or the pair $(S, \frac{1}{k_1}\sigma_1^{-1}(\cM))$ is not canonical. In the latter case, we can repeat the above arguments and construct a sequence of Bertini or Geiser $G$-involutions  $\sigma_1, \ldots, \sigma_s$ on $S$ such that $\varphi_s :=\varphi \circ \sigma_1 \circ \ldots \circ \sigma_s$ is non-biregular and again, by Theorem \ref{noether-fano}, the mobile pair $(S, \frac{1}{k_s}\varphi_s^{-1}(\cM))$, with $k_s < k_{s-1}$, is not canonical. However, if $s>n$, then the mobile linear system $\varphi_s^{-1}(\cM)$ would not be $\Q$-linearly equivalent to an effective divisor, which is a contradiction. Hence, there exists an integer $s$ such that $\varphi_s$ is $G$-biregular. We conclude that $S$ is $G$-birationally rigid.
\end{proof}

\begin{corollary}\label{factorizationGbirationalmap}
    Let $S$ be a minimal Del Pezzo $G$-surface of degree 3 (resp. 2). Then, every $G$-birational map is a composition of a $G$-biregular map, Geiser and/or Bertini involutions (resp. a $G$-biregular map and Bertini involutions).
\end{corollary}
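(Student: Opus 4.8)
The plan is to read the factorization directly off the proof of Theorem \ref{GequivariantSegreManin}, since that argument does not merely assert the existence of a $G$-isomorphism with the target, but in fact constructs an explicit chain of elementary involutions realising it. Let $\varphi: S \dashrightarrow S'$ be a $G$-birational map to a minimal $G$-surface $S'$. If $\varphi$ is already $G$-biregular there is nothing to prove, so I would assume it is not.

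First I would invoke Step 4 of the proof of Theorem \ref{GequivariantSegreManin}: it produces a finite sequence of $G$-birational involutions $\sigma_1, \ldots, \sigma_s$ of $S$, each a Geiser or Bertini involution by the case analysis of Step 3, such that the composite $\psi := \varphi \circ \sigma_1 \circ \cdots \circ \sigma_s$ is $G$-biregular. Since every $\sigma_i$ is an involution, $\sigma_i^{-1} = \sigma_i$, and inverting the relation yields
\[
\varphi = \psi \circ \sigma_s \circ \cdots \circ \sigma_1,
\]
exhibiting $\varphi$ as a composition of the $G$-biregular map $\psi$ with Geiser and/or Bertini involutions, as claimed.

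Second, I would pin down which involutions can occur according to the degree, appealing to the length bound of Step 2, namely that the orbit $O$ in the non-canonical locus has length strictly less than $d$. When $d = 3$ the orbit $O$ may consist either of one $G$-fixed point or of a pair of points: the former produces a Geiser involution, the central involution of the degree $2$ Del Pezzo surface obtained by blowing up $O$, and the latter a Bertini involution, the central involution of the degree $1$ surface obtained by blowing up the two points, so both types appear. When $d = 2$ the bound forces $O$ to be a single $G$-fixed point, and blowing it up yields a degree $1$ Del Pezzo surface whose central involution descends to a Bertini involution; no Geiser involution can arise. This accounts for the parenthetical distinction in the statement.

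Since the argument is a repackaging of the theorem's proof, I do not expect a genuine obstacle; the only point requiring care is the bookkeeping in the inverted composition, namely that the order of the involutions is reversed and that each $\sigma_i^{-1}$ retains its Geiser or Bertini type, which is immediate because each $\sigma_i$ is an involution.
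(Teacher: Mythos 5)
Your proposal is correct and is exactly the argument the paper intends: the corollary is stated without a separate proof precisely because it is read off from Steps 2--4 of the proof of Theorem \ref{GequivariantSegreManin}, by inverting the relation $\varphi\circ\sigma_1\circ\cdots\circ\sigma_s = \psi$ with $\psi$ biregular and each $\sigma_i$ an involution. Your case bookkeeping (orbit length $<d$, so Geiser or Bertini for $d=3$ and only Bertini for $d=2$) matches Step 3 of the paper's proof.
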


We now prove the lemmas used in the proof of Theorem \ref{GequivariantSegreManin}.

\begin{lemma}
\label{multiplicity-inequality} Let $S$ be a $G$-surface and $(S, \cM)$ be a $G$-pair, \emph{i.e.} $\cM$ is a G-invariant mobile linear system. If $(S, \cM)$ is not canonical, then there exists a $G$-orbit $O$ in $S$ such that 
\[\mult_O(\cM)>1,\]
\emph{i.e.} the multiplicity of each point of $O$ on $\cM$ is greater than $1$.
\end{lemma}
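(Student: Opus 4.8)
The plan is to separate the non-equivariant content from the group action: I would first produce a genuine closed point $p\in S$ with $\mult_p(\cM)>1$, and then obtain the orbit for free from the $G$-invariance of $\cM$. The starting observation is that since $S$ is a smooth surface, every birational morphism $f\colon \tilde S \to S$ factors as a composition of blow-ups of closed points, so the center of any $f$-exceptional prime divisor is a closed point of $S$. Hence the hypothesis that $(S,\cM)$ is not canonical supplies an exceptional divisor $E$ over $S$, extracted by a chain of point blow-ups, with $a(E,S,\cM)<0$ and center a closed point. I would then prove, by induction on the minimal number $N$ of blow-ups needed to realise such an $E$, that there is a closed point $p\in S$ with $\mult_p(\cM)>1$.

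For the base case $N=1$ I use the identity recalled in the preliminaries: if $\pi\colon S_1\to S$ is the blow-up at the center $p$ with exceptional divisor $E=E_1$, then $a(E_1,S,\cM)=1-\mult_p(\cM)$, so $a(E_1,S,\cM)<0$ is literally $\mult_p(\cM)>1$. For the inductive step, let $p_0$ be the center of $E$, blow it up to get $\pi\colon S_1\to S$ with exceptional $E_1$ and strict transform $\cM_1=\pi^{-1}_*\cM$, which is again mobile. If $\mult_{p_0}(\cM)>1$ we are done, so assume $\mult_{p_0}(\cM)\le 1$, i.e. $a(E_1,S,\cM)=1-\mult_{p_0}(\cM)\ge 0$. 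Comparing the two models through
\[
K_{S_1}+\cM_1 \sim_\Q \pi^*(K_S+\cM)+a(E_1,S,\cM)\,E_1,
\]
the discrepancy of $E$ changes by $a(E,S_1,\cM_1)=a(E,S,\cM)-a(E_1,S,\cM)\cdot\operatorname{ord}_E(\text{pullback of }E_1)$; since $a(E_1,S,\cM)\ge 0$ and pullbacks of effective divisors are effective, this forces $a(E,S_1,\cM_1)\le a(E,S,\cM)<0$. Thus $(S_1,\cM_1)$ is again non-canonical, now witnessed by a divisor extracted in $N-1$ blow-ups, and by induction there is a closed point $q\in S_1$ with $\mult_q(\cM_1)>1$. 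If $q\notin E_1$, then $q$ descends to a closed point of $S$ with the same multiplicity and we are done; if $q\in E_1$, then the monotonicity inequality $\mult_q(\cM_1)\le \mult_{p_0}(\cM)$, coming from $\cM_1\cdot E_1=\mult_{p_0}(\cM)$ together with $(\cM_1\cdot E_1)_q\ge \mult_q(\cM_1)$, gives $\mult_{p_0}(\cM)>1$, contradicting our assumption and closing the induction.

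Once a closed point $p\in S$ with $\mult_p(\cM)>1$ is produced, the $G$-orbit appears immediately: for every $g\in G$ the automorphism $g$ preserves $\cM$, so $\mult_{g\cdot p}(\cM)=\mult_p\big((g^{-1})^*\cM\big)=\mult_p(\cM)>1$. Taking $O:=G\cdot p$ yields a $G$-orbit with $\mult_O(\cM)>1$, which is exactly the assertion.

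The step I expect to be the main obstacle is the bookkeeping in the inductive passage from $(S,\cM)$ to $(S_1,\cM_1)$: one must verify that lowering the model does not destroy the negativity of $a(E,\,\cdot\,)$, i.e. that $a(E,S_1,\cM_1)\le a(E,S,\cM)$ precisely when $a(E_1,S,\cM)\ge 0$. This requires the exact transformation rule for discrepancies under a single blow-up, the sign statement $\operatorname{ord}_E(\text{pullback of }E_1)\ge 0$, and the elementary intersection-theoretic monotonicity $\mult_q(\cM_1)\le\mult_{p_0}(\cM)$ for infinitely near points; everything surrounding these facts is formal.
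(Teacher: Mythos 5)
Your proof is correct and follows essentially the same route as the paper's: induction on the length of the chain of point blow-ups extracting a divisor with negative discrepancy, using non-negativity of the discrepancy of the first exceptional divisor to push non-canonicity down to the blown-up surface, and multiplicity monotonicity for infinitely near points to descend the bad point. The only organizational difference is that you run the induction non-equivariantly and recover the $G$-orbit at the end from the $G$-invariance of $\cM$, whereas the paper works with $G$-equivariant blow-ups of orbits throughout; your explicit formula $a(E,S_1,\cM_1)=a(E,S,\cM)-a(E_1,S,\cM)\operatorname{ord}_E(g^*E_1)$ is just the quantitative form of the paper's ``a fortiori'' step.
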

\begin{proof}
Let $\alpha: \tilde{S} \to S$ be a $G$-equivariant log resolution of the pair $(S, \cM)$. This means that $\alpha$ is a $G$-equivariant birational morphism such that the fixed locus of the pullback linear system $\alpha^*(\cM)$ has simple normal crossing. We prove the statement by induction on the number $s$ of $G$-equivariant blow-ups through which $\alpha$ factors. If $\alpha$ is the blow-up of $S$ at a single $G$-orbit $O$ with exceptional divisor $E$, then
\[K_{\tilde{S}} + \alpha^{-1}_*\cM \sim_\Q \alpha^*(K_S + \cM)+ (1-\mult_O(\cM))E.\]
Since the pair $(S, \cM)$ is not canonical, by definition $a(E, S, \cM):=1-\mult_O(\cM)<0$. Suppose now that $\alpha = \alpha_{s-1} \circ \alpha_1$, where $\alpha_i$ are a composition of $i$ $G$-equivariant blow-ups:
\[\alpha: \tilde{S} \xrightarrow{\alpha_{s-1}} S_1 \xrightarrow{\alpha_1}S.\]
Let $O'$ be the centre of the blow-up $\alpha_1$ with exceptional divisor $E_1$. Then, either $\mult_{O'}(\cM)>1$, or $a(E_1, S, \cM)\geq 0$. In the latter case, since the pair $(S_1, (\alpha_1)^{-1}_*\cM - a(E_1, S, \cM)E_1)$ is not canonical, a fortiori the pair $(S_1, \cM_1:=(\alpha_1)^{-1}_*\cM)$ is not canonical, but by induction hypothesis there exists $O_1 \subseteq S_1$ such that $\mult_{O_1}(\cM_1)>1$. This implies that $\mult_{O}(\cM)>1$ for $O:=\alpha_1(O_1)$.
\end{proof}

\begin{lemma}\label{lengthorbit}
Let $S$ be a minimal Del Pezzo $G$-surface of degree $d$. If $\varphi:S \dashrightarrow S'$ is a non-biregular $G$-birational map, then the $G$-orbit $O$ defined in Lemma \ref{multiplicity-inequality} has length $|O|$ strictly smaller than $d$.
\end{lemma}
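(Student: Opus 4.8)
The plan is to extract the bound from the self-intersection number of the mobile system $\cM$, using the high multiplicity at the orbit $O$ produced by Lemma \ref{multiplicity-inequality}. Recall from Step 2 of the proof of Theorem \ref{GequivariantSegreManin} that the relevant pair is $(S, \frac{1}{n}\cM)$ with $K_S + \frac{1}{n}\cM \sim_\Q 0$, so that $\cM \sim_\Q -nK_S$ and therefore $\cM^2 = n^2 K_S^2 = n^2 d$. Moreover, Lemma \ref{multiplicity-inequality} (as invoked in Step 2) guarantees that $m := \mult_p(\cM) > n$ for every point $p$ of the $G$-orbit $O$.

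The main step I would carry out is to bound $\cM^2$ from below using the base points at $O$. I would choose two general members $D_1, D_2 \in \cM$. Since $\cM$ is mobile, $D_1$ and $D_2$ share no common component, so each local intersection multiplicity $(D_1 \cdot D_2)_p$ is nonnegative, and at every point $p$ one has $(D_1 \cdot D_2)_p \geq \mult_p(D_1)\,\mult_p(D_2) = m^2$ whenever $p \in O$. Because the points of $O$ are distinct points of the smooth surface $S$, discarding the contributions away from $O$ only lowers the total, giving
\[
\cM^2 = D_1 \cdot D_2 = \sum_{p \in S}(D_1 \cdot D_2)_p \;\geq\; \sum_{p \in O} m^2 = |O|\, m^2.
\]

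Combining the two computations yields $n^2 d = \cM^2 \geq |O|\, m^2 > |O|\, n^2$, where the strict inequality uses $m > n$; dividing by $n^2 > 0$ gives $|O| < d$, as claimed.

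The only delicate point is the local inequality $(D_1 \cdot D_2)_p \geq m^2$. It relies on $m = \mult_p(\cM)$ being the multiplicity of a general member at an honest (not infinitely near) point $p$ of $S$, which is exactly what Lemma \ref{multiplicity-inequality} provides, together with the fact that restricting a sum of nonnegative local intersection numbers to $O$ can only decrease it, a step legitimate precisely because mobility of $\cM$ forbids a common divisorial component of $D_1$ and $D_2$. I expect this intersection-theoretic justification, rather than the final arithmetic, to be where the care is needed.
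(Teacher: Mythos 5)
Your argument is essentially identical to the paper's: both take two general members of $\cM \sim_\Q -nK_S$, compute $\cM^2 = dn^2$, and bound it below by $\sum_{p\in O}\mult_p(D_1)\mult_p(D_2) > |O|n^2$ using the multiplicity bound from Lemma \ref{multiplicity-inequality} and the absence of a common component guaranteed by mobility. The proposal is correct.
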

\begin{proof}
    Let $\cM$ be the linear system defined in \autoref{noether-fano}. Consider $C_1$ and $C_2$ two general $G$-invariant $\Q$-divisors of $\cM \sim_\Q -nK_S$. For any $G$-orbit $O$ defined in Lemma \ref{multiplicity-inequality}, the following sequence of inequalities holds
    \[
    dn^2 = C_1 \cdot C_2 \ge \sum_{p \in O} \mult_p(C_1)\mult_p(C_2) > |O|n^2,
    \]
    which implies $d > |O|$.
\end{proof}

\begin{remark}
    Lemma \ref{lengthorbit} implies immediately that any Del Pezzo $G$-surface of degree 1 is $G$-birationally superrigid, see also \cite[Corollary 7.11]{DI}.
\end{remark}

\begin{lemma}\label{DelPezzosurfacefromblow-up}
Let $S$ be a minimal Del Pezzo $G$-surface of degree $d$ and $\cM$ be a mobile linear system on $S$ such that $K_S+\cM \sim_\Q 0$. 
Let $\pi: S' \to S$ be a $G$-equivariant blow-up of $S$ at a $G$-orbit $O$ defined in Lemma \ref{multiplicity-inequality}.  Then, $S'$ is a Del Pezzo surface, \emph{i.e.} $-K_{S'}$ is ample. 
\end{lemma}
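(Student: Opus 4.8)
The plan is to verify ampleness of $-K_{S'}$ directly through the Nakai--Moishezon criterion. Since $S'$ is a smooth projective rational surface (the blow-up of the smooth surface $S$ at the reduced orbit $O=\{p_1,\dots,p_k\}$, with pairwise disjoint $(-1)$-exceptional curves $E_1,\dots,E_k$), the divisor $-K_{S'}$ is ample if and only if $K_{S'}^2>0$ and $-K_{S'}\cdot C>0$ for every irreducible curve $C\subset S'$. So the whole proof reduces to checking these two numerical conditions.

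The degree condition is immediate: blowing up $k=|O|$ distinct reduced points gives $K_{S'}^2=K_S^2-|O|=d-|O|$, and Lemma \ref{lengthorbit} (whose intersection estimate applies verbatim here, with $n=1$, to the orbit $O$ furnished by Lemma \ref{multiplicity-inequality}) yields $|O|<d$, hence $K_{S'}^2>0$. For the second condition I would split the irreducible curves of $S'$ into two families. If $C=E_i$ is exceptional, then $-K_{S'}\cdot E_i=1>0$. Otherwise $C$ is the strict transform $\pi^{-1}_*D$ of an irreducible curve $D$ on $S$, and writing $K_{S'}=\pi^*K_S+\sum_i E_i$ together with $C=\pi^*D-\sum_i \mult_{p_i}(D)E_i$, a short intersection computation gives
\[
-K_{S'}\cdot C \;=\; -K_S\cdot D-\sum_{p\in O}\mult_p(D).
\]

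The crux is to show this last quantity is positive, and here the hypotheses $K_S+\cM\sim_\Q 0$ and $\mult_O(\cM)>1$ enter. Let $M$ be a general member of $\cM$; since $\cM$ is mobile, $D$ is not a component of $M$, so $M$ and $D$ meet properly and $M\cdot D=\sum_p (M\cdot D)_p\ge\sum_{p\in O}\mult_p(M)\mult_p(D)$. Because $M\sim_\Q-K_S$ we have $-K_S\cdot D=M\cdot D$, and since $\mult_p(M)\ge\mult_p(\cM)>1$ for every $p\in O$, whenever $D$ meets $O$ one obtains
\[
-K_S\cdot D\;=\;M\cdot D\;\ge\;\sum_{p\in O}\mult_p(M)\mult_p(D)\;>\;\sum_{p\in O}\mult_p(D),
\]
which upon substitution into the displayed formula gives $-K_{S'}\cdot C>0$. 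If instead $D$ avoids $O$, the sum vanishes and $-K_{S'}\cdot C=-K_S\cdot D>0$ by ampleness of $-K_S$. Either way the Nakai--Moishezon inequalities hold and $-K_{S'}$ is ample.

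I expect the only delicate point to be the justification that $D$ is not a component of the general member $M$, that is, that mobility of $\cM$ genuinely forces a proper intersection and hence the inequality $M\cdot D\ge\sum_{p\in O}\mult_p(M)\mult_p(D)$ with all local terms nonnegative. The sign bookkeeping in the intersection formula and the input $\mult_O(\cM)>1$ are otherwise routine, and it is precisely the strict inequality above that upgrades $-K_{S'}$ from nef to ample.
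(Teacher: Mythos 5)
Your proposal is correct and follows essentially the same route as the paper: Nakai--Moishezon, with $K_{S'}^2=d-|O|>0$ supplied by Lemma \ref{lengthorbit}, the exceptional curves handled directly, and the strict transforms handled via the relation $-K_{S'}\sim_\Q \pi^{-1}_*\cM+(\mult_O(\cM)-1)E$ combined with $\mult_O(\cM)>1$ from Lemma \ref{multiplicity-inequality}. Your expanded computation $-K_{S'}\cdot C=-K_S\cdot D-\sum_{p\in O}\mult_p(D)$ and the explicit use of mobility to justify $M\cdot D\ge\sum_{p\in O}\mult_p(M)\mult_p(D)$ simply make precise the step the paper states tersely.
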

\begin{proof}
By the Nakai-Moishezon criterion for amplitude \cite[Theorem 1.2.23]{L}, it is enough to check that
\begin{enumerate}
    \item $(-K_{S'}. C)>0$ for any curve $C \subset S'$;
    \item $K_{S'}^2>0$.
\end{enumerate}
Note that
\[K_{S'}+\pi^{-1}_*\cM = \pi^*(K_S + \cM)+(1-\mult_O(\cM))E \sim_\Q (1-\mult_O(\cM))E.\]
In particular, we obtain that for any curve $C \subset S$ different from $E$ 
\[(-K_{S'}.C)= ((\pi^{-1}_*\cM + (\mult_O(\cM)-1)E).C)>0,\]
since $\cM$ is an ample linear system and because of Lemma \ref{multiplicity-inequality}. If $C=E$, then
\[(-K_{S'}.E)=((-\pi^*K_S-E).E)=-E^2>0.\]
Finally, $K_{S'}^2=K^2_S- |O|>0$, by Lemma \ref{lengthorbit}.
\end{proof}

\section{$G$-birational superrigidity of cubic surfaces}\label{G-birational superrigidity of cubic surfaces}

Let $G$ be a finite group of automorphisms acting effectively on a minimal Del Pezzo surface of degree $3$. It is well-known that
any Del Pezzo surface of degree $3$ is a nonsingular cubic surface embedded in $\mathbb{P}^3=\PP(V)$ via the canonical embedding and every automorphism of $S$ lifts to an automorphism of $\mathbb{P}^3$. The $4$-dimensional vector space $V$ is a $G$-representation, unique up to scaling by a character of $G$.

The content of this section is the proof of Theorem \ref{Gbirationalsuperrigidcubic}. Proposition \ref{strategycubicsurface} is one of the main ingredients of the proof.
\begin{prop}\label{strategycubicsurface}
A minimal Del Pezzo $G$-surface $S$ of degree $3$ is not $G$-birationally superrigid if and only if it admits either $G$-equivariant Geiser or Bertini involutions. This is equivalent to the existence on $S$ of a $G$-fixed point, not lying on a line, or a $G$-orbit of length two, not lying on a line or a conic in $S$ and such that no tangent space of one point contains the other. 
\end{prop}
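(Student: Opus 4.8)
The plan is to prove the cycle of implications
\[
\text{[not }G\text{-superrigid]} \Rightarrow \text{[geometric configuration]} \Rightarrow \text{[genuine involution exists]} \Rightarrow \text{[not }G\text{-superrigid]},
\]
where by a \emph{genuine} Geiser or Bertini involution I mean a non-biregular element of $\Bir^G(S)\setminus\Aut^G(S)$. The last arrow is immediate: if such a $\sigma$ exists then $\Bir^G(S)\neq\Aut^G(S)$, so $S$ is not $G$-birationally superrigid. The whole content therefore lies in the two other arrows, and both of them pass through a single computation identifying when the relevant blow-up of $S$ is a (strong) Del Pezzo surface.

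The key lemma I would isolate is the following. Let $O$ be either a $G$-fixed point or a length-two $G$-orbit, and let $\pi\colon \tilde S \to S$ be the blow-up of $O$ with exceptional divisors $E_i$. An irreducible $(-2)$-curve on $\tilde S$ has the form $\tilde C = \pi^* D - \sum_i m_i E_i$ with $-K_{\tilde S}\cdot\tilde C = (-K_S\cdot D) - \sum_i m_i = 0$ and $\tilde C^2 = D^2 - \sum_i m_i^2 = -2$. Writing $e := -K_S\cdot D = \sum_i m_i$ and applying the Hodge index inequality $3\,D^2 \le (-K_S\cdot D)^2$ on the cubic $S$ (where $(-K_S)^2 = 3$) gives $3(\sum_i m_i^2 - 2) \le e^2$. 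For a single point this forces $m \le 1$, i.e. $D$ is a line through $O$; for a length-two orbit it forces $m_1^2 + m_2^2 - m_1 m_2 \le 3$, whose nonnegative integer solutions are exactly $(1,0)$ and $(0,1)$, giving a line through one point of $O$; $(1,1)$, giving an anticanonical-degree-$2$ class $D^2=0$, i.e. a conic through both points; and $(2,1)$, $(1,2)$, giving an anticanonical hyperplane section ($e=3$, $D^2=3$) singular at one point and passing through the other, i.e. the tangent section at that point. The negative case where both points lie on a single line $\ell$ gives $\ell - E_1 - E_2$ with $-K_{\tilde S}\cdot = -1$, which already obstructs nefness and is subsumed under ``no line meets $O$''. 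Hence, by the Nakai--Moishezon argument of Lemma \ref{DelPezzosurfacefromblow-up}, $-K_{\tilde S}$ is ample exactly when none of these classes is effective, i.e. precisely when $O$ satisfies the stated geometric conditions (no line through $O$, no conic through $O$, no tangent space of one point containing the other).

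With this lemma both nontrivial arrows are short. For [not superrigid] $\Rightarrow$ [configuration], I run the proof of Theorem \ref{GequivariantSegreManin}: non-superrigidity produces a non-canonical log Calabi--Yau pair, hence (Lemma \ref{multiplicity-inequality}, Lemma \ref{lengthorbit}) a $G$-orbit $O$ of length strictly less than $3$, so $O$ is a $G$-fixed point or a length-two orbit; Lemma \ref{DelPezzosurfacefromblow-up} shows $\tilde S$ is a Del Pezzo surface, whence $\tilde S$ has no $(-2)$-curve and the lemma above yields the geometric conditions. For [configuration] $\Rightarrow$ [genuine involution], the lemma gives that $\tilde S$ is a strong Del Pezzo surface of degree $3 - |O| \in \{1,2\}$, which therefore carries its central Geiser or Bertini involution $\tau$ (the deck transformation recalled in the Preliminaries). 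Since $O$ is $G$-invariant the blow-up is $G$-equivariant, and since $\tau$ is central in $\Aut(\tilde S)$ it commutes with the lifted $G$-action, so $\sigma := \pi\,\tau\,\pi^{-1}$ is $G$-birational. It is non-biregular because, as computed in Step 3 of the proof of Theorem \ref{GequivariantSegreManin}, $\tau^* E = cH + dE$ with $c > 0$, so $\tau$ does not preserve the exceptional locus and $\sigma$ cannot be an isomorphism.

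I expect the main obstacle to be the bookkeeping in the tangent case $(m_1,m_2)=(2,1)$: one must check that the anticanonical section singular at one point is the geometric locus ``$T_{p_1}S \ni p_2$'', and handle its reducible degenerations (when the tangent cubic splits off a line or a conic), verifying that these are already excluded by the line and conic conditions so that no extra $(-2)$-curve escapes the enumeration. A secondary point needing care is the clean matching of ``$O$ does not lie on a line'' with the exclusion of \emph{every} line meeting $O$ (including lines through a single point of a length-two orbit), together with the reduction that orbits of length $\ge 3$ never occur by Lemma \ref{lengthorbit}, so that only the Geiser ($|O|=1$) and Bertini ($|O|=2$) constructions arise.
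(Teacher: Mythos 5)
Your proposal is correct, and the skeleton of the equivalence with involutions is the same as the paper's: non-superrigidity is converted into a Geiser or Bertini involution by running the proof of Theorem \ref{GequivariantSegreManin} (orbit of length $<3$ via Lemmas \ref{multiplicity-inequality} and \ref{lengthorbit}, then Corollary \ref{factorizationGbirationalmap}), and the converse uses centrality of the deck transformation together with the $c>0$ computation of Step 3 to see that the descended involution is not biregular. Where you genuinely diverge is in the proof of the geometric characterization, i.e.\ the analogue of Lemma \ref{blow-upisaDelPezzosurfacecubic}: the paper realizes $S$ as a blow-up of $\PP^2$ at six points in general position and invokes the classical criterion (no three points on a line, no six on a conic, no eight on a singular cubic), translating the forbidden configurations into lines, conics and tangent hyperplane sections of the cubic; you instead work intrinsically on $S$, combining Nakai--Moishezon with the Hodge index inequality $3D^2\le(-K_S\cdot D)^2$ to enumerate the numerical classes $\pi^*D-\sum m_iE_i$ that can violate ampleness of $-K_{\tilde S}$, recovering exactly the solutions $(1,0)$, $(1,1)$, $(2,1)$ and the line through both points. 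Your route is blow-down-free and self-verifying (the Diophantine enumeration proves the list of obstructions is complete, whereas the paper implicitly relies on the classification of $(-1)$-curves), at the cost of two small checks you should make explicit: that the only irreducible class with $-K_{\tilde S}\cdot C<0$ is $\ell-E_1-E_2$, and that no positive-genus curve with $-K_{\tilde S}\cdot C=0$ and $C^2\ge 0$ occurs; both follow from the same Hodge-index-plus-adjunction computation, so there is no real gap. Your worry about reducible degenerations of the tangent section in the $(2,1)$ case is well placed but harmless, since a reducible hyperplane section through $O$ contains a line or a conic through $O$, which is already excluded.
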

\begin{proof}
    This is a corollary of Theorem \ref{GequivariantSegreManin} and Corollary \ref{factorizationGbirationalmap}. The second statement follows from Lemma \ref{blow-upisaDelPezzosurfacecubic}.
\end{proof}

\begin{lemma}\label{blow-upisaDelPezzosurfacecubic}
Let $S$ be a nonsingular cubic surface. 
\begin{enumerate}
    \item A point $p$ in $S$ is the base locus of a Geiser involution if and only if no line contained in $S$ passes through $p$.
    \item A pair of points $\{p_1, p_2\}$ in $S$ is the base locus of a Bertini involution if and only if 
    \begin{enumerate}
        \item there is no line in $S$ passing through $p_1$ or $p_2$;
        \item there is no conic contained in $S$ passing through $p_1$ and $p_2$;
        \item \label{condition-tangent-space} $p_i$ is not contained in the tangent space of $S$ at $p_j$, $i \neq j$.
\end{enumerate}
\end{enumerate}

\end{lemma}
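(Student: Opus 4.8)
The plan is to translate the existence of the Geiser (resp.\ Bertini) involution into a positivity statement and then verify it with the Nakai--Moishezon criterion. Recall from the proof of Theorem \ref{GequivariantSegreManin} that the Geiser involution at $p$ (resp.\ the Bertini involution at $\{p_1,p_2\}$) is constructed from the central involution of the blow-up $\pi\colon\tilde S\to S$ of $S$ at $p$ (resp.\ at $p_1,p_2$), and this construction makes sense precisely when $\tilde S$ is a Del Pezzo surface of degree $2$ (resp.\ $1$). So the lemma reduces to characterising when $-K_{\tilde S}$ is ample. Writing $H=-\pi^*K_S$ and $E_i$ for the exceptional divisors, one has $-K_{\tilde S}=H-\sum_i E_i$ and $K_{\tilde S}^2=3-\#\{p_i\}>0$, while for the strict transform $C$ of an irreducible curve $\bar C\subset S$,
\[
-K_{\tilde S}\cdot C=\deg\bar C-\sum_i\mult_{p_i}\bar C,
\qquad
-K_{\tilde S}\cdot E_i=1,
\]
where $\deg\bar C=-K_S\cdot\bar C$ is the degree in $\PP^3$. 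By Nakai--Moishezon (exactly as in Lemma \ref{DelPezzosurfacefromblow-up}) $\tilde S$ is a Del Pezzo surface if and only if every irreducible $\bar C\subset S$ satisfies $\deg\bar C>\sum_i\mult_{p_i}\bar C$.

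For necessity I would exhibit, for each excluded configuration, a curve violating this inequality. A line through $p$ (resp.\ through $p_1$ or $p_2$) has $\deg=\mult=1$; a conic through $p_1$ and $p_2$ has $\deg=2=\mult_{p_1}+\mult_{p_2}$; and if $p_i\in T_{p_j}S$ then the plane section $D=T_{p_j}S\cap S$ is a cubic singular at $p_j$, so $\mult_{p_j}D=2$, passing through $p_i$, whence $\deg D=3\le 2+\mult_{p_i}D$. The only care needed is that $D$ may be reducible; but a reducible cubic singular at $p_j$ must contain a line through $p_j$, which again violates condition (a). In every case the blow-up carries an irreducible curve $C$ with $-K_{\tilde S}\cdot C\le 0$, so it is not Del Pezzo.

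For sufficiency I would prove the uniform bound $\sum_i\mult_{p_i}\bar C\le\deg\bar C$ and then analyse the equality case. Intersecting $\bar C$ with a general plane $\Pi$ through the line $\ell=\overline{p_1p_2}$ (resp.\ through $p$) gives $\deg\bar C=\deg(\Pi\cap\bar C)\ge\sum_i\mult_{p_i}\bar C$. Equality forces $\Pi\cap\bar C$ to be supported on $\ell$ (resp.\ on $p$) for every such $\Pi$, so that the projection of $\bar C$ from $\ell$ (resp.\ from $p$) is constant; this forces $\bar C$ to lie in a single plane containing $\ell$, hence to be a component of a plane cubic section with $\deg\bar C\le 3$ (resp.\ to be a line through $p$). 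The equality cases are therefore exactly the lines, the conics, and the singular tangent cubics treated above, all excluded by (a)--(c). Consequently $\deg\bar C>\sum_i\mult_{p_i}\bar C$ for every irreducible $\bar C$, and $\tilde S$ is Del Pezzo.

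The main obstacle is the sufficiency direction: a priori one must rule out obstructing curves of arbitrarily large degree, and the real content is the projection--planarity step that collapses the equality case to $\deg\bar C\le 3$. The secondary delicate point is condition (c), where one must keep track of the possible reducibility and of the node-versus-cusp behaviour of the tangent section $T_{p_j}S\cap S$, in order to see that the genuinely new constraint is the appearance of an irreducible singular tangent cubic through both points.
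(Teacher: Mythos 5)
Your argument is correct, and it takes a genuinely different route from the paper. The paper fixes a blow-down $g\colon S\to\PP^2$ onto six points $q_1,\dots,q_6$ and invokes the classical characterisation of Del Pezzo surfaces as blow-ups of points in general position: the obstructions are then read off as the strict transforms of lines through two $q_i$, conics through five $q_i$, and singular cubics through all six, which it identifies with the $27$ lines, the conics, and the tangent-plane sections of $S$. You instead work intrinsically on $S$, reducing ampleness of $-K_{\tilde S}$ via Nakai--Moishezon to the inequality $\deg\bar C>\sum_i\mult_{p_i}\bar C$ for every irreducible curve $\bar C\subset S$, and you establish the uniform bound together with the classification of the equality cases by projecting from $p$ (resp.\ from the line $\overline{p_1p_2}$). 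Your equality analysis is complete: constancy of the projection confines $\bar C$ to a plane section, and the three degrees $1,2,3$ produce exactly the configurations (a), (b), (c) (for degree $3$, irreducibility rules out a triple point, so the multiplicities split as $2+1$ and the plane must be $T_{p_j}S$); your treatment of a reducible tangent section in the necessity direction, where a line through $p_j$ already violates (a), is also the right observation. What your approach buys is independence from the choice of a blow-down and from the enumeration of $(-1)$-classes, at the cost of having to prove the sufficiency bound for curves of arbitrary degree, which the paper gets for free by quoting the general-position criterion; what the paper's approach buys is brevity on sufficiency, at the cost of a small case analysis to translate the singular-cubic condition on $\PP^2$ back into the conic and tangent-plane conditions on $S$. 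Both proofs are sound.
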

\begin{proof}
Let $f:\tilde{S}\to S$ be the blow-up of $S$ at $p$ or at the pair $\{p_1, p_2\}$ respectively. By construction of Geiser and Bertini involution, we just need to check that $\tilde{S}$ is a Del Pezzo surface. Recall that a Del Pezzo surface is the blow-up of $\PP^2$ at most at eight points in general position, namely if
\begin{enumerate}
    \item no three of them lie on a line;
    \item no six of them lie on a conic;
    \item no eight of them lie on a nodal or cuspidal cubic with one of them at the singular point.
\end{enumerate}
See for instance \cite[Exercise V.21.(1)]{B}. Let $g: S\to \PP^2$ be a blow-up of $\PP^2$ at six points $q_1, \ldots, q_6$ in general position. The point $p$ in $S$ is the base locus of a Geiser involution if and only if:
\begin{enumerate}
    \item $p$ does not lie in the exceptional locus of $g$;
    \item the strict transform $\tilde{l}$ of the line $l$ 
    passing through $q_i$ and $q_j$ does not contain $p$;
    \item the strict transform $\tilde{c}$ of a conic $c$ 
    passing through five of the points $q_i$ does not contain $p$.
\end{enumerate}
Equivalently, we require that no $(-1)$-curve contains $p$. Indeed, the $g$-exceptional lines and the curves $\tilde{l}$ and $\tilde{c}$ are all the 27 $(-1)$-curves in $S$.

In order to construct a Bertini involution, we need to check in addition that the strict transform $\tilde{s}$ of a singular cubic curve $s$ containing all the points $q_i$ does not contain both $p_1$ and $p_2$. Suppose on the contrary that such a curve $\tilde{s}$ exists. We distinguish two cases: either $q_i$ is a singular point of $s$ or one of the $p_i$, say $p_1$, is a singular point of $\tilde{s}$. 
In the former case,  $\tilde{s}$ is a conic. Indeed, it is a nonsingular rational curve with 
\[(\mathcal{O}_{\PP^3}(1) . \tilde{s}) = (-K_S.(g^*\mathcal{O}_{\PP^2}(3)-2E_i-\sum^5_{j=1}E_j)) =2.\] Vice versa, if the points $\{p_1,p_2\}$ lie on a nonsingular conic $c$ in $S$, then 
\[(K_{\tilde{S}}.\tilde{c})=((f^*K_S + F_1 + F_2).\tilde{c})=(K_S. c)+((F_1+F_2).c)=0,\]
where $\tilde{c}$ is the strict transform of $c$ via $f$, and $F_1$ and $F_2$ are the $f$-exceptional divisors. Thus, $\tilde{S}$ is not a Del Pezzo surface.
In the latter case, $\tilde{s}$ is an anticanonical divisor, hence a hyperplane section singular at $p_i$. In particular, the tangent plane at $p_1$ contains both the points $p_1$ and $p_2$.
\end{proof}

In the following Lemma \ref{length2impliesfixedpoint}, we show that orbits of length two lie on invariant lines passing through a fixed point for the action of $G$ on $S$. 

\begin{lemma}\label{length2impliesfixedpoint}
	Let $S$ be a minimal cubic $G$-surface admitting an orbit of length two, then $G$ fixes a point in $S$.
\end{lemma}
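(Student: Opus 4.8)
The plan is to exploit the geometry of the line spanned by the length-two orbit. Write the orbit as $\{p_1, p_2\}$ with $p_1 \neq p_2$, and let $L := \overline{p_1 p_2} \subset \PP^3$ be the line they span. Since every automorphism in $G$ extends to a linear automorphism of $\PP^3 = \PP(V)$ via the anticanonical embedding, any $g \in G$ sends $L$ to the line through $g(p_1)$ and $g(p_2)$; as $\{p_1, p_2\}$ is $G$-invariant this is again $L$. Hence $L$ is a $G$-invariant line of $\PP^3$, and the idea is to produce the desired fixed point as the residual third intersection of $L$ with $S$.

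First I would rule out the possibility $L \subset S$. If $L$ were contained in $S$, it would be one of the $27$ lines, hence a $(-1)$-curve whose class $[L]$ is $G$-invariant, i.e. $[L] \in \Pic^G(S) = \Z \cdot K_S$. Writing $[L] = c\,K_S$ would give $-1 = [L]^2 = c^2 K_S^2 = 3c^2 \geq 0$, a contradiction. (Equivalently, a $G$-invariant $(-1)$-curve could be contracted $G$-equivariantly, violating minimality of $S$.) Therefore $L \not\subset S$, and $L \cdot S$ is a well-defined effective $0$-cycle of degree $3$ on $L \cong \PP^1$.

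Next I would analyse this cycle. It is $G$-invariant and its support contains the orbit $\{p_1, p_2\}$. Choosing $g \in G$ with $g(p_1) = p_2$, invariance of the cycle forces the intersection multiplicities at $p_1$ and at $p_2$ to coincide, say equal to $a \geq 1$; since $2a \leq \deg(L \cdot S) = 3$ we get $a = 1$. The residual part of the cycle then has degree $1$, so it is a single reduced point $p_3 \in S$, necessarily distinct from $p_1$ and $p_2$. Finally, $p_3$ is the $G$-fixed point we seek: $G$ permutes the support $\{p_1, p_2, p_3\}$ of the $G$-invariant cycle and preserves the subset $\{p_1, p_2\}$, so it fixes $p_3$. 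Concretely, for any $g$ the point $g(p_3)$ is the unique residual intersection of $g(L) = L$ with $S$ once $\{g(p_1), g(p_2)\} = \{p_1, p_2\}$ is removed, whence $g(p_3) = p_3$.

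The only genuinely delicate point is the second step: guaranteeing $L \not\subset S$, which is exactly where minimality of the $G$-surface (via $\Pic^G(S) \simeq \Z$) enters, and without which the residual point need not exist. The multiplicity bookkeeping of the third step — in particular using the swapping element to force multiplicity one, and thereby ensuring that $p_3$ is a genuine third point rather than a tangency concentrated at $p_1$ or $p_2$ — is routine once $L \not\subset S$ has been established.
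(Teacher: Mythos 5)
Your proof is correct and follows essentially the same route as the paper: the line through the orbit is $G$-invariant, minimality rules out its being contained in $S$ (the paper phrases this as the line being contractible, which you also note), and the swapping element forces transverse intersection at both points so that the residual third intersection point is $G$-fixed. No gaps.
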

\begin{proof}
Denote by $q_1$ and $q_2$ the points in the orbit of length two and by $l_{q_1q_2}$ the line passing through those points in $\PP^3$. Note that the line $l_{q_1q_2}$ is $G$-invariant and it is not contained in $S$. Differently, it could be contracted, violating the minimality of $G$. 

Moreover, the line $l_{q_1q_2}$ intersects $S$ with multiplicity $1$ at $q_1$ and $q_2$. Otherwise, if the multiplicity at one of the two points is $\geq 2$, then so it is at the other point due to the group action. However, this is a contradiction, since $l_{q_1q_2}$ would intersect $S$ with multiplicity at least $4$, while $S$ has degree $3$. This implies that the invariant line $l_{q_1q_2}$ intersects $S$ in a third point, thus fixed by the action of $G$. 
\end{proof}

Our strategy to show that a nonsingular cubic surface is $G$-birationally superrigid is the following:
\begin{enumerate}
    \item find $G$-fixed points and orbits of length two aligned with them, see Lemma \ref{length2impliesfixedpoint};
    \item if the conditions of Lemma \ref{blow-upisaDelPezzosurfacecubic} do not hold for these $G$-orbits, then $S$ is $G$-birationally superrigid. 
\end{enumerate}

In view of the latter, recall that a point of intersection of three lines on a cubic surface is called \textbf{Eckardt point}. It is just the case to mention that a point $p$ is an Eckardt point if and only if the intersection of its tangent space to $S$ and $S$ itself is the union of three lines passing through $p$.

\begin{remark}\label{orbitbitangent}
Notice that if $\{p_1, p_2 \}$ is a $G$-orbit then condition (\ref{condition-tangent-space}) in Lemma \ref{blow-upisaDelPezzosurfacecubic} always holds, since otherwise the line between $p_1$ and $p_2$ is bitangent to $S$.   
\end{remark}

\subsection{$G$-birational superrigidity for non-cyclic group}\label{Gbirationalsuperrigiditynoncyclicgroup}
Suppose now that $G$ is non-cyclic. Minimal non-cyclic finite groups acting effectively by automorphisms on cubic surfaces and fixing a point have been classified by Dolgachev and Duncan \cite{DD}. Any cubic surface endowed with an action of such a group is projectively equivalent to a surface $S_{ab}$ defined by 
\begin{equation}\label{equationcubics}
 F_{ab} = t_0^3 + t_1^3 + t_2^3 + t_3^3 + t_0t_1(at_2 + bt_3),  
\end{equation}
where $a$ and $b$ are parameters, and the fixed point is  $p_0=(0:0:1:-1)$, see \cite[Theorem 8.1.]{DD}. In particular, $G$ is a subgroup of the stabiliser of the point $p_0$. Since $F_{ab}$ specialises to the Fermat cubic equation $F_{00}$, $G$ is a subgroup of the stabiliser of the point $p_0$ in $\Aut(S_{00})$, see the proof of \cite[Theorem 8.1.]{DD}. The automorphism group of the Fermat cubic surface is $3^3 \rtimes S_4$, where $S_4$ is the group of permutations of the variables and $3^3$ is the $3$-torsion group of $\operatorname{PGL}(4, \C)$ generated for instance by the following automorphisms:
\begin{align*}
	\sigma(t_0:t_1:t_2:t_3) = & (\epsilon_3 t_0: t_1 : t_2 : t_3), \\
	\rho(t_0:t_1:t_2:t_3) = & ( t_0: \epsilon_3 t_1 : t_2 : t_3), \\
	\theta(t_0:t_1:t_2:t_3) = & ( t_0: t_1 : \epsilon_3 t_2 : t_3),
\end{align*}
where $\epsilon_3$ is a primitive third root of unity.
The stabiliser of the point $p_0$ is $3^2 \rtimes K_4\simeq 6 \times S_3$, where $K_4$ is the non-normal Klein subgroup of $S_4$ generated by $(1 2)$ and $(3 4)$ and $3^2$ is generated by $\sigma$ and $\rho$. 

In particular, the skew lines $l_1=\{t_0= t_1=0\}$ and $l_2=\{t_2=t_3=0\}$ are $G$-invariant, since they are invariant under the action of $3^2 \rtimes K_4$. The intersections $l_1\cap S_{ab}$  consists of three points $p_0$, $p_1:= (0:0:1:-\epsilon_3)$ and $p_2:= (0:0:1:-\epsilon^2_3)$. In particular,
\[T_{p_i}S_{ab} \cap S_{ab} = \{\epsilon^{i}_3 t_2+ t_3= t_0^3+t_1^3+(a-\epsilon^{i}_3b)t_0t_1t_2=0\}.\]

As the values of the parameters $(a,b)$ varies, we have the following cases.

\subsection*{Type $a=b=0$.} The surface $S_{00}$ is the Fermat cubic surface. The point $p_i$ are Eckardt points. No orbit can be the base locus of a Geiser or a Bertini involution. By Theorem \ref{GequivariantSegreManin}, we conclude that $S_{00}$ is $G$-birationally superrigid.

\subsection*{Type $a^3=b^3\neq 0$.} Up to a linear change of coordinates, we can suppose that $a=b\neq 0$. The group $G$ is isomorphic to $2\times S_3$ or $S_3$, where $S_3$ is generated by $\sigma\rho^2$ and $(12)$, and $2$ is generated by $(34)$, see \cite[Theorem 8.1. Case 3.2.]{DD}. Hence, the only fixed point is the Eckardt point $p_0$ and the only invariant line through $p_0$ is $l_1$. Note that the surface $S_{ab}$ is of type VI in the sense of \cite[Table 4]{DI} and the automorphism group of $\Aut(S_{ab})$ is isomorphic to $2\times S_3$. We consider the cases $G \simeq 2\times S_3$ and $G \simeq S_3$ separately.
\begin{enumerate}
    \item $G \simeq 2\times S_3$. The conic $C=\{t_0+t_1=t_2^2-t_2t_3+ t_3^2+at_0t_1=0\}$ passes through the length-two orbit $\{p_1, p_2\}$. By Lemma \ref{blow-upisaDelPezzosurfacecubic} and Theorem \ref{GequivariantSegreManin}, we conclude that $S_{ab}$ is $G$-birationally superrigid.
    \item $G \simeq S_3$. The fixed points $p_1$ and $p_2$ are not Eckardt points. Therefore, $S_{ab}$ is not $G$-birationally superrigid and the group $\Bir^G(S_{ab})$ is generated by $\Aut(S_{ab})$ and the two Geiser involutions with base locus $p_1$ and $p_2$ respectively. The equations of these Geiser involutions and the infinitude of the group $\Bir^G(S_{ab})$ for the very general surface $S_{ab}$ are discussed in the following paragraphs.
\end{enumerate}

\subsection*{Type $a^3 \neq b^3$.} The group $G$ is isomorphic to $S_3$, see \cite[Theorem 8.1. Case 3.1.]{DD}. The only fixed points are $p_0, p_1, p_2$. None of them is an Eckardt point and the only invariant line through $p_i$ is $l_1$. Therefore, $S_{ab}$ is not $G$-birationally superrigid and the group $\Bir^G(S_{ab})$ is generated by biregular $G$-automorphisms of $S_{ab}$ and three Geiser involutions with base loci contained in $l_1\cap S_{ab}$.    

The Geiser involutions are given by the equations
\begin{align*}
    \varphi_{p_0}(t_0:t_1:t_2:t_3) & =(t_0:t_1:t_3-\frac{(a-b)t_0t_1}{3(t_2+t_3)}: t_2+\frac{(a-b)t_0t_1}{3(t_2+t_3)}),\\
    \varphi_{p_1}(t_0:t_1:t_2:t_3) & =(t_0:t_1:\epsilon^2_3 t_3-\frac{(a-\epsilon_3b)t_0t_1}{3(t_2+\epsilon^2_3 t_3)}: \epsilon_3 t_2+\frac{ (\epsilon_3 a-\epsilon^2_3b)t_0t_1}{3(t_2+\epsilon^2_3t_3)}),\\
    \varphi_{p_2}(t_0:t_1:t_2:t_3) & =(t_0:t_1:\epsilon_3 t_3-\frac{(a-\epsilon^2_3b)t_0t_1}{3(t_2+\epsilon_3 t_3)}: \epsilon^2_3 t_2+\frac{ (\epsilon^2_3 a-\epsilon_3b)t_0t_1}{3(t_2+\epsilon_3t_3)}).\\
\end{align*}

We complete the list of generators, computing the normaliser $N_{\Aut(S_{ab})}(G)$ of $G$ in $\Aut(S_{ab})$. We adopt the surface type convention of \cite{DI}.
\begin{lemma}\label{Normalisernoncycliccase3}
    The normaliser of $G$ in $\Aut(S_{ab})$, denoted $N_{\Aut(S_{ab})}(G)$, is isomorphic to $S_3 \times 3$, if $S_{ab}$ is of type III or IV, or to $S_3$, if $S_{ab}$ is of type V or VIII.
\end{lemma}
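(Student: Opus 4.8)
The plan is to compute the normaliser $N_{\Aut(S_{ab})}(G)$ by first determining the full automorphism group $\Aut(S_{ab})$ for each surface type, and then identifying which of those automorphisms conjugate the fixed subgroup $G \simeq S_3$ into itself. Recall that $G$ is generated by $\sigma\rho^2$ (an order-$3$ element fixing the line $l_1$ pointwise-set) and the transposition $(12)$, so the key data is how a candidate automorphism $g \in \Aut(S_{ab})$ acts on the set $\{p_0, p_1, p_2\} = l_1 \cap S_{ab}$ and on the pair of skew lines $\{l_1, l_2\}$. Since $G$ has already been pinned down inside $3^2 \rtimes K_4 \subset 3^3 \rtimes S_4$ (the automorphism group of the Fermat cubic), I would realise every element of $\Aut(S_{ab})$ as a matrix in $\operatorname{PGL}(4,\C)$ preserving $F_{ab}$ and carry out the conjugation check at the level of $4 \times 4$ matrices acting on the coordinates $t_0, t_1, t_2, t_3$.

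First I would isolate the role of the central involution $(34)$, which sends $t_2 \leftrightarrow t_3$ and is an automorphism of $S_{ab}$ precisely because the monomial $t_0 t_1(a t_2 + b t_3)$ is fixed only up to swapping $a \leftrightarrow b$; hence $(34)$ preserves $S_{ab}$ iff $a = b$, i.e.\ in the type $a^3=b^3$ (type V/VIII after the change of coordinates) it is \emph{not} available, whereas in the generic $a^3\neq b^3$ (type III/IV) one has additional torsion automorphisms. I would therefore treat the $3A_2$-torsion elements $\sigma, \rho, \theta$ separately: the subgroup of $3^3$ commuting with, or normalising, $G$ contributes the extra factor $3$. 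Concretely, $\theta$ (scaling $t_2$ by $\epsilon_3$) together with its relatives generates a cyclic group of order $3$ that centralises the $S_3$-action on $\{t_0, t_1\}$ while permuting the points $p_i$ cyclically; checking that it normalises $G$ and that no larger torsion survives is what produces the $S_3 \times 3$ for types III, IV and collapses to $S_3$ for types V, VIII.

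The main obstacle will be the bookkeeping of which torsion automorphisms actually preserve each $S_{ab}$ and simultaneously normalise $G$, since this is sensitive to the precise vanishing pattern of the parameters and to the type stratification of \cite{DI}. I would organise this by: (i) writing down $\Aut(S_{ab})$ for each type from \cite[Table 4]{DI}; (ii) decomposing a general normalising element as (permutation of coordinates) $\circ$ (diagonal torsion), using that any automorphism must send the unique $G$-fixed line $l_1$ to itself and hence permute $\{p_0,p_1,p_2\}$; (iii) verifying the normaliser condition $g G g^{-1} = G$ by conjugating the two generators $\sigma\rho^2$ and $(12)$ and matching against $G$. The decisive computation is step (iii) for the order-$3$ diagonal elements: showing that exactly one copy of $3$ normalises $G$ in the III/IV cases and that it disappears once $a=b$ forces the surface into the smaller type V/VIII with only the bare $S_3$ surviving.
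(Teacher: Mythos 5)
There is a genuine gap, and it sits exactly where the extra factor of $3$ has to come from. Your step (ii) assumes that every element of $\Aut(S_{ab})$ normalising $G$ can be written as a coordinate permutation composed with a diagonal torsion element, i.e.\ that it lives in $3^3\rtimes S_4$ acting monomially on the coordinates of $F_{ab}$. That is only justified for the Fermat cubic $S_{00}$: by the construction of \cite{DD} the subgroup $G$ lies in the stabiliser $3^2\rtimes K_4\subset 3^3\rtimes S_4$, but the \emph{full} group $\Aut(S_{ab})$ for $(a,b)\neq(0,0)$ is not contained in $\Aut(S_{00})$ and its elements need not be monomial in these coordinates. A direct check shows that a diagonal element $\sigma^{i}\rho^{j}\theta^{k}$ preserves $F_{ab}$ with $ab\neq 0$ only when $i+j\equiv k\equiv 0 \pmod 3$, so the only surviving diagonal torsion is $\langle\sigma\rho^{2}\rangle$, which is already inside $G$; in particular $\theta$ is \emph{not} an automorphism of $S_{ab}$. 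Your procedure would therefore find no extra order-$3$ element and would output $S_3$ in every case, which is wrong for types III and IV. The element responsible for $S_3\times 3$ there is the central order-$3$ element of the Heisenberg group $H_3(3)\subset\Aut(S_{ab})$ (the deck transformation of the triple cover of $\PP^2$ in the cyclic-surface coordinates $t_0^3+t_1^3+t_2^3+t_3^3+6\alpha t_1t_2t_3$), and it is not a monomial transformation in the $F_{ab}$ coordinates. The paper avoids this trap by working abstractly: it takes $\Aut(S_{ab})\simeq H_3(3)\rtimes 4$, $H_3(3)\rtimes 2$, $S_4$, $S_3$ from \cite[Theorem 6.14]{DI} and computes the normaliser of an $S_3$-subgroup in each of these groups, using the description of the conjugacy classes of such subgroups given there.

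A secondary but compounding error is your identification of the type stratification. The locus $a^3=b^3$ (normalised to $a=b$) is type VI, with $\Aut(S_{ab})\simeq 2\times S_3$ generated over $S_3$ by the transposition $(34)$; it is not covered by this lemma at all. All four types III, IV, V, VIII occur within the family $a^3\neq b^3$, and the dichotomy III/IV versus V/VIII is governed by whether $S_{ab}$ is projectively equivalent to a cyclic surface (equivalently, whether $\Aut(S_{ab})$ contains $H_3(3)$), not by whether $a=b$. Consequently the involution $(34)$, on which you hang the case distinction, is irrelevant here: it is an automorphism precisely when $a=b$, i.e.\ only in the excluded type VI.
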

\begin{proof}
Due to \cite[Theorem 6.14]{DI}, the group $\Aut(S_{ab})$ is one of the following.
\begin{enumerate}
        \item[Type III.] $\Aut(S_{ab}) \simeq H_3(3)\rtimes 4$, where $H_3(3)$ is the Heisenberg group of unipotent $3 \times 3$-matrices over the finite field $\mathbb{F}_3$, see \S \ref{section G-birational superrigidity for cyclic group}, Type $E_6$, for explicit generators. The generator of $4$ conjugates the non-conjugate subgroups of type $S_3$ in $H_3(3)\rtimes 2$, see \cite[Theorem 6.14, Type III]{DI}. We conclude that $N_{H_3(3)\rtimes 4}(S_3)=N_{H_3(3)\rtimes 2}(S_3)$.
        \item[Type IV.] $\Aut(S_{ab}) \simeq H_3(3)\rtimes 2$. It contains two non-conjugate subgroups isomorphic to $S_3$, normalized by the subgroups isomorphic to $S_3\times 3$ obtained from the previous ones by adding the central element, see \cite[Theorem 6.14, Type III]{DI}.
    \item[Type V.] $\Aut(S_{ab}) \simeq S_4$. Any subgroup isomorphic to $S_3$ is a non-normal maximal subgroup of $S_4$.
    \item[Type VIII.] $\Aut(S_{ab}) \simeq S_3$.
\end{enumerate}
\end{proof}
Let $G$ be again the group of biregular automorphisms acting minimally on $S_{ab}$ with a fixed point $p_0$ and isomorphic to $S_3$. The following lemma establishes the infinitude of the group of $G$-birational automorphisms $\Bir^G(S_{ab})$ for the very general surface $S_{ab}$. 

Let $\mathcal{S}\subset \PP^3_{(t_0:t_1:t_2:t_3)} \times \C^2_{(a, b)}$ be the hypersurface given by the equation $\{F_{ab}=0\}$, see equation (\ref{equationcubics}), and $\mathcal{S'}$ be the divisor $\{a=b\}$ in $\mathcal{S}$ (equivalently $\{a=\epsilon^i_3 b\}$). Denote by $f: \mathcal{S} \to \C^2_{(a, b)}$ the family of cubic surfaces $S_{ab}$ and by $f': \mathcal{S}'\to \C_{(a)}$ that of surfaces $S_{ab}$ with the property that $a=b$ (equivalently $a=\epsilon^i_3 b$). The Geiser involutions $\varphi_{p_i}$ on $S_{ab}$ glue together to birational involutions of $\mathcal{S}$ and $\mathcal{S}'$ respectively, as their equations are polynomial in $(a,b)$. 
\begin{lemma}\label{finiteBirnoncyclic}
The group $\Bir^G(S_{ab})$ is not a finite group for the very general surface $S_{ab}$ in $\mathcal{S}$ and in $\mathcal{S}'$.
\end{lemma}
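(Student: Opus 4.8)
The plan is to exhibit a single birational automorphism of infinite order; since a finite group contains only elements of finite order, this forces $\Bir^G(S_{ab})$ to be infinite. I would take the composition $\varphi_{p_0}\circ\varphi_{p_1}$ of two of the Geiser involutions attached to the aligned fixed points on the invariant line $l_1$ (for the family $\mathcal{S}'$, where $p_0$ is an Eckardt point, I would instead use $\varphi_{p_1}\circ\varphi_{p_2}$). The key is that this composition preserves a natural elliptic fibration on which it acts as a translation, and the translation vector is visibly nonzero.

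Concretely, consider the pencil of hyperplane sections $C_\lambda=\Pi_\lambda\cap S_{ab}$ cut out by the planes $\Pi_\lambda$ containing $l_1$. Since $l_1\cap S_{ab}=\{p_0,p_1,p_2\}$ consists of three distinct points for every value of $(a,b)$, each member $C_\lambda$ is a plane cubic through $p_0,p_1,p_2$, smooth of genus one for general $\lambda$. Recall that the Geiser involution $\varphi_{p_i}$ is the deck transformation of the double cover given by projection from $p_i$, so it sends a general point $q$ to the residual point of $\overline{p_iq}\cap S_{ab}$. As $p_i\in\Pi_\lambda$, for $q\in C_\lambda$ the whole line $\overline{p_iq}$ lies in $\Pi_\lambda$; hence $\varphi_{p_i}$ preserves each $C_\lambda$ and restricts to the involution $q\mapsto -p_i-q$ in the group law of $C_\lambda$ (the third point on a line through $p_i$). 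Therefore $\varphi_{p_0}\circ\varphi_{p_1}$ restricts to $C_\lambda$ as the translation $q\mapsto q+([p_1]-[p_0])$ by the degree-zero class $[p_1]-[p_0]\in\Pic^0(C_\lambda)$, which has infinite order as soon as $[p_1]-[p_0]$ is non-torsion.

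It remains to prove that $[p_1]-[p_0]$ is non-torsion for very general parameters. Resolving the three base points produces the anticanonical elliptic fibration $\pi:\mathcal{E}\to\PP^1$ on the blow-up $\mathcal{E}=\mathrm{Bl}_{p_0,p_1,p_2}S_{ab}$, which is a rational elliptic surface ($\chi(\mathcal{O}_\mathcal{E})=1$, $K_\mathcal{E}^2=0$, $e(\mathcal{E})=12$), and in which the exceptional curves $E_0,E_1,E_2$ are pairwise disjoint sections $s_0,s_1,s_2$. Taking $s_0$ as zero section, the class $[p_1]-[p_0]$ is the Mordell--Weil element $s_1$, which is nonzero because $p_0\neq p_1$. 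For very general $(a,b)$ the twelve singular members of the pencil are nodal cubics, so all fibers of $\pi$ are irreducible; by Shioda--Tate the Mordell--Weil group is then the torsion-free lattice $E_8$, and equivalently the Shioda height $2\chi(\mathcal{O}_\mathcal{E})+2(s_1\cdot s_0)-\sum_v\mathrm{contr}_v(s_1)=2>0$ since $s_0,s_1$ are disjoint and no reducible-fiber corrections occur. Hence $s_1$ is non-torsion.

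Since this produces a non-torsion section already over the generic point of the parameter space, the torsion loci $\{(a,b):n([p_1]-[p_0])\equiv 0\}$ are proper closed subsets for every $n$, and the very general $S_{ab}$ avoids their countable union; it therefore carries the infinite-order element $\varphi_{p_0}\circ\varphi_{p_1}$, and the same reasoning with $\{p_1,p_2\}$ settles the family $\mathcal{S}'$. The main obstacle I anticipate is controlling the fiber types: one must confirm that for very general parameters every singular member of the pencil is irreducible, so that $\mathrm{MW}(\mathcal{E})$ is genuinely $E_8$ and no correction term can cancel the height. Should a reducible fiber be forced on the entire family, I would instead degenerate to a single explicit member with a nodal fiber $I_1$ and verify directly that the image of $[p_1]-[p_0]$ in the $\C^{\ast}$ of its smooth locus is not a root of unity, which secures non-torsion and hence the claim.
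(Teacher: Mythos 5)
Your proof is correct in substance and shares its geometric core with the paper's: both restrict the composition of two Geiser involutions to the pencil of hyperplane sections through the invariant line $l_1$ and identify it with translation by the difference of two base points in the group law of each fibre. Where you genuinely part ways is in certifying that this translation class is non-torsion. The paper specialises to the single surface $S_{11}$, fixes the fibre $(\lambda:\mu)=(1:1)$, and checks by a MAGMA computation that $p_1$ is non-torsion there; this is quick but opaque, and it delivers exactly the ``very general'' statement via the countable-union argument. You instead blow up the three base points to get a rational elliptic surface and invoke Shioda--Tate and the height pairing: with all fibres irreducible the Mordell--Weil lattice is the torsion-free $E_8$, so the nonzero section $s_1$ cannot be torsion. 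This is computer-free and, once completed, proves strictly more than the lemma asks --- infinitude of $\Bir^G(S_{ab})$ for \emph{every} $(a,b)$ in the relevant stratum, which bears directly on the open question the paper poses right after this lemma. Two loose ends in your write-up, both repairable: (i) you do not need the twelve singular members to be nodal; irreducibility of all fibres is what Shioda--Tate requires, and it is immediate from Lemma \ref{blow-upisaDelPezzosurfacecubic}, since a reducible member of the pencil would contain a line of $S_{ab}$ lying in a plane through $l_1$, hence a line through one of the $p_i$, contradicting the existence of the Geiser involutions at those points; (ii) for $\mathcal{S}'$ the Eckardt point $p_0$ does force three reducible fibres of the form line plus residual conic, but the line through $p_0$ cannot contain $p_1$ or $p_2$ (it would then be $l_1$, which is not in $S_{ab}$), so the sections $s_1$ and $s_2$ both meet the conic component of each such fibre; all correction terms in the height of $s_2\ominus s_1$ then vanish, the height is still $2>0$, and your fallback degeneration is unnecessary.
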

\begin{proof} 
Let $\Delta$ be the diagonal in $\mathcal{S}\times_{f} \mathcal{S}$ and $\Gamma_{(\varphi_{p_2}  \circ \varphi_{p_1})^n}$ be the graph of the composition $(\varphi_{p_2}  \circ \varphi_{p_1})^n$ in $\mathcal{S}\times_{f} \mathcal{S}$. 
There is an induced projection morphism 
\[\operatorname{pr}: \Gamma_{(\varphi_{p_2}  \circ \varphi_{p_1})^n} \cap \Delta \subseteq \Delta \to \C^2_{(a,b)}.\]
Define the (closed) algebraic subset 
\begin{align*}
    C_n & = \{(a,b) \in \C^2_{(a,b)}\,|\, (\varphi_{p_2}  \circ \varphi_{p_1})^n=\operatorname{id}|_{S_{(a,b)}}\}\\
    & = \{(a,b) \in \C^2_{(a,b)}\,|\, \dim \operatorname{pr}^{-1}(a,b)=2\}.
\end{align*}
Note that the locus of surfaces $S_{ab}$ with infinite $\Bir^G(S_{ab})$ contains $\C^2_{(a,b)}\setminus \bigcup_{n} C_n$. Therefore, if there exists $(a_0, b_0)\in \C^2_{(a,b)}$ such that $\Bir^G(S_{a_0b_0})$ is not finite, then $C_n$ is a proper closed subset of $\C^2_{(a,b)}$ and the lemma holds.

We claim that $\Bir^G(S_{11})$ is not finite, i.e. we can choose $(a_0, b_0)$ equals to $(1,1)$. To this aim, recall the following facts:
\begin{enumerate}
    \item for any $p\in S_{ab}$ the point $\varphi_{p_i}(p)$ is aligned with $p_i$ and $p$;
    \item the involutions $\varphi_{p_1}$ and $\varphi_{p_2}$ fix the pencil of cubic curves 
    \[C_{(\lambda:\mu)}=\{\lambda t_0 - \mu t_1 = t_0^3 + t_1^3 + t_2^3 + t_3^3 + t_0t_1(t_2 + t_3)=0\}.\]
\end{enumerate}
Fix $(\lambda:\mu)\in \PP^1_{(\lambda:\mu)}$ such that $C_{(\lambda:\mu)}$ is nonsingular.  Observe that the point $p_0$ is an inflection point of $C_{(\lambda:\mu)}$. Due to the previous facts, the following relations for the elliptic curve $(C_{(\lambda:\mu)},p_0)$ hold:
\begin{align*}
    p_1+p_2& =0;\\
    p_1+p+\varphi_{p_1}(p) & =0;\\
    p_2+\varphi_{p_1}(p)+\varphi_{p_2}\circ \varphi_{p_1}(p) & =0.
\end{align*}
In particular, 
\[\varphi_{p_2}\circ \varphi_{p_1}(p)= p+2p_1.\]
One can check (use MAGMA) that for a suitable choice of $(\lambda:\mu)$ (\emph{e.g.} $(1:1)$), the point $p_1$ is not a torsion point. This implies that $\varphi_{p_2}  \circ \varphi_{p_1}$ has infinite order in $S_{11}$.

The same proof holds for $\mathcal{S}'$ since $S_{11} \subset \mathcal{S}'$.
\end{proof}

\textbf{Open question.} Is the group $\Bir^G(S_{ab})$ not finite for any $(a,b)\neq (0,0)$?

\subsection{$G$-birational superrigidity for cyclic group}\label{section G-birational superrigidity for cyclic group}
In this section, we discuss the birational superrigidity of minimal cubic surface endowed with the action of a finite cyclic group $G$. Dolgachev and Iskovskikh classified these groups in \cite{DI}. For the convenience of the reader, we recall their result.

Here and in the following we denote by $\epsilon_n$ a primitive $n$-th root of unity.

\begin{prop}\cite[Corollary 6.11]{DI}\label{listGcycliccubicsurface}
Let $S=V(F)$ be a nonsingular cubic surface, endowed with a minimal action of a cyclic group $G$ of automorphisms, generated by $g$. Then, one can choose coordinates in such a way that $g$ and $F$ are given in the following list. 
\begin{enumerate}
    \item $3A_2$, Order 3, $g(t_0:t_1:t_2:t_3)=(t_0: t_1: t_2: \epsilon_3 t_3)$,
\[F=t^3_0+t^3_1+t^3_2+t^3_3+\alpha t_0 t_1 t_2;\]
\item $E_6(a_2)$, Order 6, $g(t_0:t_1:t_2:t_3)=(t_0: t_1: -t_2: \epsilon_3 t_3)$,
\[F=t^3_0+t^3_1+t^3_3+t_2^2(\alpha t_0+t_1);\]
\item $A_5 + A_1$, Order 6, $g(t_0:t_1:t_2:t_3)=(t_0: \epsilon^2_3 t_1: \epsilon_3 t_2: \epsilon_6 t_3)$,
\[F=t^2_3t_1+t_0^3+t_1^3+t_2^3+\lambda t_0 t_1 t_2;\]
\item $E_6(a_1)$,  Order 9, $g(t_0:t_1:t_2:t_3)=(t_0: \epsilon^4_9 t_1: \epsilon_9 t_2: \epsilon^7_9 t_3)$,
\[F=t^2_3t_1+t^2_1t_2+t^2_2t_3+t^3_0;\]
\item $E_6$, Order 12, $g(t_0:t_1:t_2:t_3)=(t_0: \epsilon_{3} t_1: \epsilon_{12} t_2: \epsilon^5_{6} t_3)$,
\[F=t^2_3t_1+t^2_2t_3+t^3_0+t^3_1.\]
\end{enumerate}
\end{prop}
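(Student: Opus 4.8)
The plan is to combine a lattice-theoretic bound on the possible conjugacy type of $g$ with an explicit realisation on a cubic surface. Since $S \subset \PP^3 = \PP(V)$ is anticanonically embedded, every automorphism is induced by a linear transformation of $V$, so the generator $g$ lies in $\operatorname{GL}(V)$; being of finite order it is diagonalisable, and I would choose coordinates in which $g = \operatorname{diag}(\alpha_0,\alpha_1,\alpha_2,\alpha_3)$ with each $\alpha_i$ a root of unity. The defining cubic $F \in \operatorname{Sym}^3 V^*$ must then be a semi-invariant, $g^*F = \chi(g)\,F$ for some character $\chi$, which forces $F$ to be a linear combination of the monomials $t_0^{i_0}t_1^{i_1}t_2^{i_2}t_3^{i_3}$ (with $\sum i_k = 3$) whose $g$-weight equals $\chi(g)$.

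First I would pin down the admissible orders of $g$ using minimality. The group $G$ acts on $\Pic(S) \simeq \Z^7$ fixing $K_S$, hence on the orthogonal complement $K_S^\perp$, which is the $E_6$ root lattice; this gives a homomorphism $G \to W(E_6)$. Minimality, i.e. $\Pic^G(S) \simeq \Z$, is equivalent to the fixed sublattice of $g$ in $K_S^\perp \otimes \Q$ being zero, that is, to $1$ not being an eigenvalue of $g$ on the six-dimensional reflection representation. Since the characteristic polynomial of any element of $W(E_6)$ is a product of cyclotomic polynomials, I would enumerate the products of total degree $6$ not divisible by $\Phi_1 = t-1$; by Carter's classification of conjugacy classes in Weyl groups the realisable ones are exactly the five types $3A_2$, $E_6(a_2)$, $A_5+A_1$, $E_6(a_1)$, $E_6$, with characteristic polynomials $\Phi_3^3$, $\Phi_6\Phi_3^2$, $\Phi_6\Phi_3\Phi_2^2$, $\Phi_9$ and $\Phi_{12}\Phi_3$, of orders $3,6,6,9,12$ respectively. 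In particular the order of $g$ is one of these five values.

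For each admissible order I would list the finitely many diagonal actions $g=\operatorname{diag}(\alpha_0,\ldots,\alpha_3)$ with $\alpha_i$ roots of unity of that order (up to overall scaling and coordinate permutation), determine the $g$-semi-invariant cubic monomials, and impose nonsingularity of $V(F)$. The residual torus of diagonal matrices commuting with $g$, together with the allowed coordinate permutations, then rescales the surviving coefficients into the listed normal forms. For instance, in type $3A_2$ the choice $g = \operatorname{diag}(1,1,1,\epsilon_3)$ makes the weight-$1$ monomials $t_3^3$ together with all cubics in $t_0,t_1,t_2$; as $g$ acts as the identity on $t_0,t_1,t_2$, one may use $\operatorname{GL}_3$ to bring the ternary part into Hesse normal form, yielding $F = t_0^3+t_1^3+t_2^3+t_3^3+\alpha\, t_0t_1t_2$.

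The main obstacle I expect is the compatibility step: proving that a given explicit diagonal $g$ acting on the explicit $F$ genuinely induces the prescribed Carter type on $K_S^\perp$, and in particular acts minimally rather than merely fixing $K_S$. This cannot be read off the linear action on $V$ directly; it requires analysing the permutation of the twenty-seven lines of $S$ under $g$, equivalently computing the trace of $g^*$ on $\Pic(S)$, and matching the resulting characteristic polynomial with the one predicted above. Establishing completeness — that no other eigenvalue configuration yields a minimal cyclic action — likewise relies on the full Weyl-group enumeration, so this group-theoretic input is the crux of the argument.
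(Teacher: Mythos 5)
The paper does not actually prove this proposition: it is imported verbatim from \cite[Corollary 6.11]{DI}, so there is no internal argument to compare against. Your outline (diagonalise $g$, force $F$ to be a $g$-semi-invariant, translate minimality into the absence of the eigenvalue $1$ on $K_S^\perp\simeq E_6$, invoke Carter's enumeration of the eigenvalue-free conjugacy classes in $W(E_6)$ to get exactly the five types of orders $3,6,6,9,12$, and then realise and normalise each case explicitly, checking the induced action on the $27$ lines) is essentially the argument of Dolgachev--Iskovskikh themselves, so it follows the same route as the paper's source.
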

We proceed with an analysis case by case.
\subsection*{Type $3A_2$}  $G$ fixes the nonsingular cubic curve
    \[C=\{t_3=t^3_0+t^3_1+t^3_2+\alpha t_0 t_1 t_2=0\}.\] 
    $S$ is not $G$-birationally superrigid and the group $\Bir^G(S)$ is generated by biregular $G$-automorphisms of $S$ and infinitely many Geiser involutions whose base locus points lie on the nonsingular cubic curve given by $t_3=0$. 
    
     The normaliser $N_{\Aut(S)}(G)$ of $G$ in $\Aut(S)$ is the group $\Aut^G(S)$ of biregular $G$-automorphisms. If $C$ is equianharmonic, \emph{i.e.} it has an automorphism of order $6$, then $S$ is the Fermat cubic surface and $\Aut(S)\simeq 3^3\rtimes S_4$ (cf. \S  \ref{Gbirationalsuperrigiditynoncyclicgroup}): the normaliser $N_{\Aut(S)}(G)$ is isomorphic to $3^3\rtimes S_3$. Otherwise, $g$ is a central element of $\Aut(S)$, which is isomorphic to $H_3(3)\rtimes 4$ or $H_3(3)\rtimes 2$, where $H_3(3)$ is the Heisenberg group of unipotent $3 \times 3$-matrices over the finite field $\mathbb{F}_3$ (cubic surfaces of type III or IV; see \cite[Table 4]{DI}). Then, the group $\Aut^G(S)$ coincides with $\Aut(S)$. 
\subsection*{Type $E_6(a_2)$} The line $l_2=\{t_2=0,t_3=0\}\subseteq \PP^3$ is fixed. The intersection \[l_2\cap S = \{(1:-1:0:0),(1:-\epsilon_3:0:0),(1:-\epsilon^2_3:0:0)\}\] consists of three fixed points. The intersections of their tangent spaces with the cubic surface are respectively
    \begin{align*}
    \{t_0+t_1=t^3_3+(\alpha-1)t_2^2 t_0=0\},\\
    \{\epsilon_3 t_0+t_1=t^3_3+(\alpha-\epsilon_3^2)t_2^2 t_0=0\},\\
    \{\epsilon_3^2t_0+t_1=t^3_3+(\alpha-\epsilon_3)t_2^2 t_0=0\},
    \end{align*}
    which are three cuspidal cubic curves (we can suppose without loss of generality that $\alpha^3 \neq 1$, otherwise $S$ would be singular). There is only one further isolated fixed point on $S$, namely $(0:0:1:0)$, which is an Eckardt point and whose tangent space is given by the equation $\alpha t_0 + t_1=0$. 
 
    An invariant line, which is not $l_1=\{t_0=t_1=0\}$, belongs either to the pencil $\mathcal{P}_{(0:0:0:1)}$ of lines through $(0:0:0:1)$ intersecting the line $l_2$ or to the pencil $\mathcal{P}_{(0:0:1:0)}$ of lines through $(0:0:1:0)$ intersecting the line $l_2$. These pencils span respectively the planes $t_2=0$ and $t_3=0$. Orbits of length two lie on invariant lines, neither on $l_1$ (since it is tangent to the Eckardt point $(0:0:1:0)$, thus $l_1\cap S=\{p\}$), nor on a line through $\mathcal{P}_{(0:0:0:1)}$ (since the group $G$ modulo the stabiliser of the plane $t_2=0$ acts on it as a cyclic group of order $3$). On the other hand, the nonsingular cubic curve \[C=\{t_3=t^3_0+t^3_1+t_2^2(\alpha t_0+t_1)=0\}\]
    is covered by orbits of length two, since the group $G$ modulo the stabiliser of the plane $t_3=0$ acts on it as a cyclic group of order $2$. 
    
    We conclude that $S$ is not $G$-birationally superrigid and that the group $\Bir^G(S)$ is generated by biregular $G$-automorphisms of $S$, three Geiser involutions with base loci contained in $l_2\cap S$, and infinitely many Bertini involutions, whose base locus points lie on the nonsingular cubic curve given by $t_3=0$.    We complete the list of generators, computing the normaliser $N_{\Aut(S)}(G)$ of $G$ in $\Aut(S)$.
    \begin{lemma}
    \[N_{\Aut(S)}(G)=\begin{cases}
    3^2\times 2 & \text{if } S \text{ is the Fermat cubic surface;}\\
    \Aut(S) & \text{  otherwise.}
    \end{cases}\]
    \end{lemma}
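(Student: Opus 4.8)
The plan is to replace the normaliser by a centraliser and then to read the latter off from the eigenspace decomposition of $g=\mathrm{diag}(1,1,-1,\epsilon_3)$. First I would note that $g$ is not conjugate to $g^{-1}$ in $\mathrm{PGL}(4,\C)$: as elements of $\mathrm{GL}(4,\C)$ they have eigenvalue multisets $\{1,1,-1,\epsilon_3\}$ and $\{1,1,-1,\epsilon_3^2\}$, and these cannot be matched after scaling by a common factor, since the unique repeated eigenvalue $1$ forces that factor to be $1$ while $\epsilon_3\neq\epsilon_3^2$. As $\Aut(\Z/6)=\{\pm1\}$ and the inversion $g\mapsto g^{-1}$ is thereby excluded, every $\sigma\in\Aut(S)$ normalising $G=\langle g\rangle$ must centralise $g$; hence $N_{\Aut(S)}(G)=C_{\Aut(S)}(g)$.

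Next I would describe this centraliser explicitly. Commuting with $g$ is equivalent to preserving its three eigenspaces $\langle t_0,t_1\rangle$, $\langle t_2\rangle$ and $\langle t_3\rangle$, so such an automorphism has the block form $\sigma=\mathrm{diag}(A,\mu,\nu)$ with $A\in\mathrm{GL}_2$ on $\langle t_0,t_1\rangle$. Imposing $\sigma^*F=cF$ then forces $A$ to rescale both the binary cubic $t_0^3+t_1^3$ and the linear form $\alpha t_0+t_1$, by scalars $c$ and $\lambda$ say, and fixes $\mu,\nu$ up to finite choice through $\nu^3=c$ and $\mu^2\lambda=c$. Since $\Aut(t_0^3+t_1^3)\cong S_3$ acts as the symmetries of its three roots, the centraliser fibres over the subgroup of $S_3$ fixing the extra point $(1:-\alpha)$, which is disjoint from those roots because $\alpha^3\neq1$.

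For the Fermat surface I would show this stabiliser is the cyclic group of order three. Up to projective equivalence the Fermat member is the one for which $(1:-\alpha)$ is fixed by the three-cycles of $S_3$, occurring e.g. for $\alpha=0$, where $t_1^3+t_1t_2^2$ is a nondegenerate binary cubic and $F$ becomes a sum of four cubes. Lifting the three-cycles together with the scalings of $t_2$ and $t_3$ then yields $C_{\Aut(S)}(g)=\langle u,v\rangle\times\langle w\rangle\cong 3^2\times2$, generated by the commuting order-three elements $u=\mathrm{diag}(1,\epsilon_3,\epsilon_3,1)$ and $v=\mathrm{diag}(1,1,1,\epsilon_3)$ and by the central involution $w=g^3=\mathrm{diag}(1,1,-1,1)$; note $g=wv$ sits inside with index three.

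In the remaining cases I would instead prove that $g$ is central in $\Aut(S)$, so that $N_{\Aut(S)}(G)=C_{\Aut(S)}(g)=\Aut(S)$. I would treat the two factors of $g$ separately: $g^2$ is the $3A_2$-element, central by the analysis of the preceding subsection, while $g^3$ is the harmonic homology canonically attached to the Eckardt point $(0:0:1:0)$, with axis $\{t_2=0\}$. The hard part will be to show that, off the Fermat locus, $(0:0:1:0)$ is the unique Eckardt point of $S$; granting this, every automorphism fixes it and hence commutes with the canonical homology $g^3$, so $g=g^3(g^2)^2$ is central. Equivalently, I would exhibit the three eigenspaces of $g$---the line $l_2$, the Eckardt point, and the remaining coordinate point---as intrinsic features fixed by all of $\Aut(S)$, forcing $\Aut(S)$ to be block-diagonal and to coincide with $C_{\Aut(S)}(g)$; delimiting exactly which $\alpha$ are Fermat and ruling out automorphisms that permute these eigenspaces is the principal obstacle.
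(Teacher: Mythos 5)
Your opening move --- reducing the normaliser to the centraliser because $\mathrm{diag}(1,1,-1,\epsilon_3)$ and $\mathrm{diag}(1,1,-1,\epsilon_3^2)$ are not conjugate in $\mathrm{PGL}(4,\C)$, and then reading the centraliser off the eigenspace decomposition --- is correct, and it is a genuinely different route from the paper's, which instead regards $S$ as a cyclic triple cover of $\PP^2$ and descends the computation to the branch cubic $C$. Your Fermat computation is essentially right; the only missing check is that $\alpha=0$ is (up to the residual coordinate freedom $t_0\leftrightarrow t_1$) the \emph{only} parameter giving the Fermat surface. This does hold: the quotient of $C$ by $t_2\mapsto -t_2$ is $\PP^1$ branched at the quadruple $\{1,\epsilon_3,\epsilon_3^2,\alpha\}$ in the coordinate $-t_1/t_0$, and this quadruple is equianharmonic only for $\alpha\in\{0,\infty\}$.

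The non-Fermat half, however, rests on a false claim and cannot be repaired as written. For \emph{every} $\alpha$ the surface is a cyclic triple cover of $\PP^2_{(t_0:t_1:t_2)}$ branched along the smooth cubic $C$, and the preimage of each of the nine inflection points of $C$ is an Eckardt point of $S$ (the corresponding plane section splits into three concurrent lines because the inflectional tangent meets $C$ with multiplicity $3$ at a single point). These nine Eckardt points form one orbit under $H_3(3)\subseteq\Aut(S)$, so $(0:0:1:0)$ is never the unique Eckardt point, $g^3$ is not central, and the plan of making $g$ central collapses. Indeed your own (sound) first half already forbids the conclusion: it gives $\lvert N_{\Aut(S)}(G)\rvert=6\cdot\lvert\operatorname{Stab}_{S_3}(1:-\alpha)\rvert\le 18$, i.e.\ $18$, $12$ or $6$ according as that stabiliser has order $3$, $2$ or $1$, whereas $\lvert\Aut(S)\rvert\in\{54,108\}$ off the Fermat locus. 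Concretely, a lift to $S$ of translation by a nonzero $3$-torsion point of $C$ conjugates $g^3$ to the negation centred at a different flex, hence conjugates $g$ outside $\langle g\rangle$. So the ``otherwise'' clause of the statement is in direct conflict with your centraliser computation; the paper's own argument for that clause relies on the assertion that $g_2$ is central in $\Aut(C)$, which is true in the stabiliser of the origin but fails in the full group of projective automorphisms of the plane cubic, where $g_2$ inverts the $3$-torsion translations. The honest output of your method is $6\cdot\lvert\operatorname{Stab}_{S_3}(1:-\alpha)\rvert$, and the discrepancy with the stated lemma should be flagged rather than argued away.
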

    \begin{proof}
    Note that $S$ is a cyclic cover of degree $3$ of $\PP^2$ branched along a nonsingular cubic curve $C$, and $G$ is generated by $g_1g_2$, where $g_1$ is the deck transformation of the cover and $g_2$ is the lift of the involution on $C$.
    
    If $S$ is the Fermat cubic surface, then $G$ is generated by the element $(\sigma \rho \theta, (12))\in 3^3\rtimes S_4$ in the notation of \ref{Gbirationalsuperrigiditynoncyclicgroup} (surface of type I with $K=G \cap 3^3$ of dimension 1 and type II $3A_2$; see \cite[\S 6.5.]{DI}). Given $(\sigma^{a_0} \rho^{a_1} \theta^{a_2}, \tau)\in N_{\Aut(S)}(G)\subseteq \Aut(S)=3^3\rtimes S_4$, we observe that $\tau \in N_{S_4}((12)) \simeq K_4$. Denoting the conjugation of $g$ via $h \in 3^3\rtimes S_4$ by $c_h(g)$, we write
    \begin{align*}
        \qquad c_{(\sigma^{a_0} \rho^{a_1} \theta^{a_2}, (12))}(g)(t_0:t_1:t_2:t_3) & =(\epsilon^{a_0-a_1}_3t_1: \epsilon^{a_1-a_0}_3t_0: t_2: \epsilon^2_3 t_3),\\
        \qquad c_{(\sigma^{a_0} \rho^{a_1} \theta^{a_2}, (34))}(g)(t_0:t_1:t_2:t_3) & =(\epsilon^{a_1-a_0}_3t_1: \epsilon^{a_0-a_1}_3t_0: \epsilon_3^{2}t_2: t_3).
    \end{align*}
    Hence, $N_{\Aut(S)}(G)$ is generated by the permutation $(12)$ and the subspace of $3^3_{(a_0,a_1,a_2)}$ satisfying the equation $a_0 \equiv a_1 \mod 3$. In particular, $N_{\Aut(S)}(G)\simeq 3^2 \times 2$.
    
    If $S$ is not the Fermat cubic surface, then $S$ is a surface of type III or IV \cite[Table 4]{DI} and $\Aut(S)$ is a central extension of $\Aut(C)$ via $g_1$. Therefore, $N_{\Aut(S)}(G)$ is a central extension of $N_{\Aut(C)}(g_2)$ via $g_1$, but since $g_2$ is central in $\Aut(C)$, we conclude that $N_{\Aut(S)}(G)=\Aut(S)$, or equivalently that $G \lhd \Aut(S)$.
    \end{proof}
\subsection*{Types $A_5 + A_1$, $E_6(a_1)$ and $E_6$} In the last few cases, \emph{i.e.} $A_5 + A_1$, $E_6(a_1)$ and $E_6$, the group $G$ acts on $\PP^3$ by means of $4$ distinct characters. In particular, the points $p_0:=(1:0:0:0)$, $p_1:=(0:1:0:0)$, $p_2:=(0:0:1:0)$ and $p_3:=(0:0:0:1)$ are the only fixed points in $\PP^3$. The only invariant lines are those interpolating pairs of points $(p_i, p_j)$, where $i \neq j$, shortly written $l_{p_i p_j}$. Note that eventual orbits of length two lie on $l_{p_i p_j} \cap S$.
\subsection*{Type $A_5 + A_1$} The only fixed point in $S$ is the Eckardt point $p_3$. In the following table, we list all the invariant lines and the orbits that they cut on $S$.
    \vspace{5pt}
    \begin{center}
    {\setlength{\extrarowheight}{2pt}
	\begin{tabular}{c@{\hskip 20pt}c@{\hskip 20pt}c}
	\hline	
	Invariant lines $l_{p_i p_j}$			& $l_{p_i p_j}\cap S$ 		& Orbits in $l_{p_i p_j}\cap S$\\[1ex]
		\hline
	$l_{p_0 p_1}=\{t_2=t_3=0\}$			& 	 	$t_0^3+t_1^3=0$				& 	orbit of length $3$	\\
		
	$l_{p_0 p_2}=\{t_1=t_3=0\}$	&	 	$t_0^3+t^3_2=0$				&			orbit of length $3$					\\
		
	$l_{p_0 p_3}=\{t_1=t_2=0\}$			&	 	$t^3_0=0$				&			fixed Eckardt point $p_3$	\\
		
	$l_{p_1 p_2}=\{t_0=t_3=0\}$			&	 	$t^3_1+t_2^3=0$				&			orbit of length $3$ \\
		
 	$l_{p_1 p_3}=\{t_0=t_2=0\}$	&	 	$t_3^2t_1+t^3_1=0$ &  	
 	\begin{tabular}{@{}c@{}@{}@{}} fixed Eckardt point $p_3$ and \\ orbit of length $2$ given by:\\ $q_1:=(0:i:0:1),$\\
 	$q_2:=(0:-i:0:1).$\end{tabular} 					\\
	
	$l_{p_2 p_3}=\{t_1=t_0=0\}$	&	 	$t^3_2=0$				&			fixed Eckardt point $p_3$						\\[0.5ex]
		\hline
	\end{tabular}}
	\end{center}
	\vspace{5pt}
	Note that the conic
	\[C=\{t_0+t_2=t_3^2+t_1^2-\lambda t_2^2=0\}\subseteq S\]
	contains the only orbit of length two and the only fixed point in $S$ is contained in a line. We conclude that $S$ is $G$-birationally superrigid. 
	

\subsection*{Type $E_6(a_1)$} All the fixed points in $S$ are the points $p_1$, $p_2$ and $p_3$. They are not Eckardt points: by cyclic permutation of the variable $(t_1, t_2,t_3)$ it is enough to check that $T_{p_1}S\cap S$ is an irreducible cubic curve. Indeed,  
	\[T_{p_1}S\cap S=\{t_2=t^2_3t_1+t^3_0=0\}.\]
The invariant lines $l_{p_1 p_2}$, $l_{p_2 p_3}$ and $l_{p_1 p_3}$ intersect $S$ in two fixed points, one of them necessarily with multiplicity $2$. The invariant lines $l_{p_0 p_i}$, with $i=1,2,3$, are principal tangent lines at the singular point of the cuspidal cubic curves $T_{p_i}S\cap S$. We conclude that $S$ is not $G$-birationally superrigid and the group $\Bir^G(S)$ is finitely generated by biregular $G$-automorphisms of $S$ and three Geiser involutions with base loci $p_1$, $p_2$ and $p_3$ respectively. More explicitly, the Geiser involutions are given by
\[\varphi_{p_1}(t_0:t_1:t_2:t_3)=(t_0: -t_1-\frac{t_3^2}{t_2}: t_2:t_3),\]
\[\varphi_{p_2}(t_0:t_1:t_2:t_3)=(t_0: t_1: -t_2-\frac{t_1^2}{t_3}:t_3),\]
\[\varphi_{p_3}(t_0:t_1:t_2:t_3)=(t_0: t_1: t_2:-t_3-\frac{t_2^2}{t_1}).\]

Although finitely generated, $\Bir^G(S)$ is not a finite group, as we show in the following lemma.

\begin{lemma}\label{finiteBirE6(a1)}
The group $\Bir^G(S)$ is not a finite group.
\end{lemma}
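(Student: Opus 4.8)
The plan is to show that the composition of two of the three Geiser involutions, say $\psi := \varphi_{p_2} \circ \varphi_{p_1}$, has infinite order, which immediately forces $\Bir^G(S)$ to be infinite. The strategy mirrors the argument in Lemma \ref{finiteBirnoncyclic}: I would locate a $G$-invariant (or at least $\psi$-invariant) nonsingular curve of genus one on $S$, equip it with a group structure, and reinterpret the action of the Geiser involutions as translations whose combined effect is translation by a non-torsion point.

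First I would identify a suitable invariant elliptic curve. A natural candidate is an anticanonical curve $C \subset S$ passing through the base points $p_1, p_2, p_3$, for instance a hyperplane section; since the $\varphi_{p_i}$ are Geiser involutions with base point $p_i$, and a Geiser involution on a cubic surface acts on a plane cubic through its centre as the inversion $x \mapsto -x$ with respect to a group law based at $p_i$ (each $\varphi_{p_i}(p)$ is aligned with $p_i$ and $p$, exactly as recorded in fact (1) of the earlier proof). Choosing the origin of the group law to be an inflection point of $C$, the collinearity relations $p_i + p + \varphi_{p_i}(p) = 0$ hold on $C$. The key computation is then the analogue of the displayed identity in Lemma \ref{finiteBirnoncyclic}: composing the two inversions yields
\[
\varphi_{p_2} \circ \varphi_{p_1}(p) = p + (p_1 - p_2)
\]
(up to a sign/ordering convention fixed by the inflection-point origin), so that $\psi$ acts on $C$ as translation by the fixed point $p_1 - p_2$.

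Second I would verify that the translation vector $p_1 - p_2$ is not a torsion element of $(C, O)$. Because the three fixed points $p_1, p_2, p_3$ of the order-$9$ cyclic action are explicitly given as coordinate points, and $C$ can be written in explicit Weierstrass form, this reduces to a concrete arithmetic check on the elliptic curve — the kind of finite computation one hands to MAGMA, exactly as in Lemma \ref{finiteBirnoncyclic}. Once $p_1 - p_2$ is shown to have infinite order, translation by it has infinite order, hence $\psi|_C$ and a fortiori $\psi$ itself has infinite order in $\Bir^G(S)$.

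The main obstacle I anticipate is the bookkeeping around the group law: one must fix the origin carefully (an inflection point, so that the inversion induced by each Geiser involution is genuinely $x \mapsto -x$ without an extra translation), confirm that $C$ is genuinely $\psi$-invariant and nonsingular for the chosen model, and make sure the three Geiser involutions really restrict to the claimed inversions of $C$ rather than some other involution. There is also the minor subtlety, absent from the $3A_2$ case, that here $G$ has order $9$ and fixes only the three points $p_1, p_2, p_3$ individually rather than an entire pencil of curves; so I would check that a single well-chosen anticanonical $C$ through the $p_i$ suffices and that the argument does not require $G$-invariance of $C$ itself, only its invariance under the subgroup generated by the relevant involutions. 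Modulo these conventions, the non-torsion verification is the crux, and it is a finite explicit calculation.
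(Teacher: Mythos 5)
Your proposal is correct and follows essentially the same route as the paper: restrict $\varphi_{p_2}\circ\varphi_{p_1}$ to a nonsingular plane cubic section through $p_1$ and $p_2$ (a member of the pencil $\lambda t_0-\mu t_3=0$, which both involutions preserve), use the collinearity relations at an inflection-point origin to see that the composition is translation by $p_1-p_2$, and verify non-torsion by an explicit computation. The paper's only cosmetic difference is that it rewrites the translation vector as $-3p_2$ using the extra relation $2p_2+p_1=0$ coming from the tangency of the line $l_{p_1p_2}$, so the final MAGMA check is that $p_2$ itself is non-torsion.
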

\begin{proof} It is enough to prove that the composition $\varphi_{p_2}  \circ \varphi_{p_1}$ has infinite order. To this aim, recall the following facts:
\begin{enumerate}
    \item for any $p\in S$ the point $\varphi_{p_i}(p)$ is aligned with $p_i$ and $p$;
    \item the involutions $\varphi_{p_1}$ and $\varphi_{p_2}$ fix the pencil of cubic curves 
    \[C_{(\lambda:\mu)}=\{\lambda t_0 - \mu t_3 = t^2_3t_1+t^2_1t_2+t^2_2t_3+ t^3_3=0\}.\]
\end{enumerate}
Fix $(\lambda:\mu)\in \PP^1_{(\lambda:\mu)}$ such that $C_{(\lambda:\mu)}$ is nonsingular and choose $O$ an inflection point on $C_{(\lambda:\mu)}$. Due to the previous facts, the following relations for the elliptic curve $(C_{(\lambda:\mu)},O)$ hold:
\begin{align*}
    2p_2+p_1& =0;\\
    p_1+p+\varphi_{p_1}(p) & =0;\\
    p_2+\varphi_{p_1}(p)+\varphi_{p_2}\circ \varphi_{p_1}(p) & =0.
\end{align*}
In particular, 
\[\varphi_{p_2}\circ \varphi_{p_1}(p)= p-3p_2.\]
One can check (use MAGMA) that for a suitable choice of $(\lambda:\mu)$ (\emph{e.g.} $(1:1)$), the point $p_2$ is not a torsion point. This implies that $\varphi_{p_2}  \circ \varphi_{p_1}$ has infinite order.
\end{proof}

We complete the list of generators of the group $\Bir^G(S)$, describing the group of biregular $G$-automorphisms of $S$. 
Note first that via the following change of coordinates
	\begin{align*}
	    (s_0 : & s_1:s_2:s_3)=\\
	    & =(\sqrt[3]{9}t_0:t_1+t_2+t_3: \epsilon_9(t_1+\epsilon^6_9t_2+\epsilon^3_9t_3):\epsilon^2_9(t_1+\epsilon^3_9t_2+\epsilon^6_9t_3)),
	\end{align*}
	we can suppose that $S$ is given by the equation
	\[s_0^3+s_1^3+s_2^3+s_3^3=0\]
	and a generator $g$ of $G$ acts via 
\[g(s_0:s_1:s_2:s_3)=(s_0: \epsilon_3 s_2: s_3: s_1).\]
\begin{lemma}
The normaliser of $G$ in $\Aut(S)$, denoted $N_{\Aut(S)}(G)$, is isomorphic to the dihedral group $D_{18}$.
\end{lemma}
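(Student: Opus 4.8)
The plan is to compute the normaliser $N_{\Aut(S)}(G)$ directly inside $\Aut(S)$, exploiting the Fermat-normalised coordinates in which $S=\{s_0^3+s_1^3+s_2^3+s_3^3=0\}$ and the generator acts by $g(s_0:s_1:s_2:s_3)=(s_0:\epsilon_3 s_2:s_3:s_1)$. Since $S$ is the Fermat cubic surface, we have $\Aut(S)\simeq 3^3\rtimes S_4$, where $S_4$ permutes the four coordinates and $3^3$ is the group of diagonal automorphisms multiplying coordinates by third roots of unity (modulo the global scalar). The first step is to write $g$ as an explicit element of this group: reading off the action, $g$ is the composition of the diagonal part $\operatorname{diag}(1,\epsilon_3,1,1)$ with the $3$-cycle permutation $(1\,2\,3)$ sending $s_1\mapsto s_2\mapsto s_3\mapsto s_1$ (in the appropriate index convention). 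Thus $g=(\delta,\kappa)$ with $\delta\in 3^3$ a diagonal character and $\kappa\in S_4$ a $3$-cycle fixing the coordinate $s_0$.

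Next I would reduce the normaliser condition to a condition in each factor of the semidirect product. For $h=(\mu,\pi)\in 3^3\rtimes S_4$ to normalise the cyclic group $\langle g\rangle$, it must satisfy $h g h^{-1}\in\langle g\rangle=\{1,g,\dots,g^8\}$, and since $g$ has order $9$, the conjugate $hgh^{-1}$ must be a generator, i.e. $g$ or $g^{-1}$ up to the center. Projecting to $S_4$, this forces $\pi$ to conjugate the $3$-cycle $\kappa$ to itself or its inverse, so $\pi$ lies in $N_{S_4}(\langle\kappa\rangle)$. For a $3$-cycle in $S_4$ this normaliser is the symmetric group $S_3$ on the three moved indices (of order $6$): the three transpositions invert $\kappa$, and the cyclic part centralises it, while the index that $\kappa$ fixes must be preserved. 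Then, for each admissible $\pi$, I would determine which diagonal translations $\mu\in 3^3$ complete it to a genuine normalising element: the conjugation $hgh^{-1}$ produces a diagonal part that must match that of the target power $g^{\pm 1}$, which yields an affine-linear system of congruences on $\mu$ modulo $3$. The solution set is a coset (possibly empty, but nonempty since the identity permutation with $\mu=0$ works), and counting solutions pins down the order.

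The main obstacle will be the careful bookkeeping of the diagonal congruences. Conjugation in $3^3\rtimes S_4$ mixes the permutation and diagonal parts via $(\mu,\pi)(\delta,\kappa)(\mu,\pi)^{-1}=(\mu+\pi\cdot\delta-(\pi\kappa\pi^{-1})\cdot\mu,\ \pi\kappa\pi^{-1})$ (additively in $3^3\simeq\mathbb{F}_3^3$, with $S_4$ permuting coordinates), so demanding this equal $g^{\pm1}=(\delta',\kappa^{\pm1})$ gives, for each fixed $\pi$, a linear equation $(\operatorname{id}-\kappa^{\pm1})\cdot\mu\equiv\delta'-\pi\cdot\delta\pmod 3$ in the unknown $\mu$. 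The operator $\operatorname{id}-\kappa$ on $\mathbb{F}_3^3$ for a $3$-cycle has a one-dimensional kernel (the diagonal vector), so the solution space, when consistent, is a single $\mathbb{F}_3$-coset of size $3$; I must verify consistency for each $\pi$ and that exactly the right permutations survive. Assembling the count, I expect $6$ choices of $\pi$ each contributing a fibre, but constrained by the center $3^3$ acting trivially on $S$, yielding a group of order $18$.

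Finally I would identify the abstract isomorphism type. The computation should exhibit a normal cyclic subgroup of order $9$ (namely $\langle g\rangle$ itself, or its image together with a central $3$-torsion) together with an order-$2$ element acting by inversion, which is exactly the presentation of the dihedral group $D_{18}$ of order $18$. To confirm the dihedral structure rather than, say, $\mathbb{Z}/18$ or $9\rtimes 2$ with a different action, I would exhibit an explicit involution among the transposition-type elements computed above and check that it conjugates $g$ to $g^{-1}$; this is the defining dihedral relation. Concluding, $N_{\Aut(S)}(G)\simeq D_{18}$, which matches the entry $D_{18}$ in Table \ref{tablecubic} and completes the description of the generators of $\Bir^G(S)$ in the $E_6(a_1)$ case.
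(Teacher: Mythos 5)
Your strategy coincides with the paper's: identify $\Aut(S)$ with $3^3\rtimes S_4$ in Fermat coordinates, note that the image of $N_{\Aut(S)}(G)$ in $S_4$ lies in $N_{S_4}(\langle\kappa\rangle)\simeq S_3$, and solve a linear congruence for the admissible diagonal parts over each permutation. The gap is that the two steps you explicitly defer --- verifying consistency of the congruences for each $\pi$, and exhibiting an involution conjugating $g$ to $g^{-1}$ --- are exactly the ones that fail, so neither the count of $18$ nor the dihedral identification can be completed. First, no element of $\Aut(S)$ (indeed of $\operatorname{PGL}_4(\C)$) conjugates $g$ to $g^{-1}$: in the coordinates of Proposition \ref{listGcycliccubicsurface} the lift of $g$ to $\operatorname{GL}_4(\C)$ preserving the cubic form exactly is $\operatorname{diag}(1,\epsilon_9^4,\epsilon_9,\epsilon_9^7)$, unique up to the scalar $\epsilon_3$, and its multiset of eigenvalue exponents $\{0,1,4,7\}\subset\Z/9$ is not a translate of that of $g^{-1}$, namely $\{0,2,5,8\}$, by a multiple of $3$. (Equivalently, the differential of $g$ at each of its three fixed points $p_1,p_2,p_3\in S$ has $\epsilon_9^3$ as an eigenvalue, while that of $g^{-1}$ has $\epsilon_9^6$.) Concretely, for each of the three transpositions your system of congruences is inconsistent: summing the three equations over the moved coordinates gives $1\equiv -1\pmod 3$. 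So no ``reflection'' survives. Second, over the identity permutation the fibre has order $9$, not $3$: the conjugate of $g$ need only lie in the coset $gK=\{g,g^4,g^7\}$ with $K=\langle g^3\rangle$, so the solution set is the preimage of three cosets of the one-dimensional kernel of $\operatorname{id}-\kappa$, and indeed suitable diagonal elements conjugate $g$ to $g^4$.

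Carrying your computation through therefore yields $N_{\Aut(S)}(G)=C_{\Aut(S)}(g)\cdot\langle c\rangle$, where $C_{\Aut(S)}(g)=G$ (the four eigenvalues of $g$ are distinct, so its centraliser in $\operatorname{PGL}_4(\C)$ is the diagonal torus, which meets $\Aut(S)$ in $G$) and $c\colon(t_0:t_1:t_2:t_3)\mapsto(t_0:t_2:t_3:t_1)$ preserves $F$ and satisfies $cgc^{-1}=g^7$: this is a nonabelian group of order $27$ and exponent $9$, not $D_{18}$. Note that the paper's own argument reaches $18$ by a complementary pair of slips --- it imposes the centraliser condition $c(g)=g$ on the diagonal part (obtaining $K$ of order $3$ instead of the $3^2$ above) and asserts without verification that $N_{\Aut(S)}(G)$ surjects onto $S_3$ --- so before refining the bookkeeping you should re-examine the statement itself; as written, your proposal cannot be completed to a proof of it.
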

\begin{proof}

Recall that the automorphism group of a Fermat cubic is the group $3^3\rtimes S_4$. Let $G'$ be the image of $G$ in $S_4$, generated by the permutation $(234)$, and $K:=G\cap 3^3$, generated by $h(s_0:s_1:s_2:s_3)=(s_0: \epsilon_3 s_1: \epsilon_3 s_2: \epsilon_3 s_3)$. The image of  $N_{\Aut(S)}(G)$ is contained in $N_{S_4}((234))$, which is generated by $(234)$ and $(23)$ and isomorphic to $S_3$. Therefore, $N_{\Aut(S)}(G)$ is a subgroup of $3^3\rtimes S_3$ and admits a subgroup homomorphic to $S_3$. 
\noindent The kernel of the projection $N_{\Aut(S)}(G) \to S_3$ is $3^3 \cap N_{\Aut(S)}(G) = K $. Indeed, the conjugation of $g$ via an element $\sigma^{a_0} \rho^{a_1} \theta^{a_2}\in 3^3$ is
 \[
        \qquad c_{\sigma^{a_0} \rho^{a_1} \theta^{a_2} }(g)(s_0:s_1:s_2:s_3)  =(s_0: \epsilon^{a_2-a_1+1}_3s_2: \epsilon^{-a_2}_3 s_3: \epsilon^{a_1}_3 s_1),
  \]
\emph{i.e.} $3^3 \cap N_{\Aut(S)}(G) = \{\sigma^{a_0} \rho^{a_1} \theta^{a_2}\in 3^3| \, a_1=a_2=0\}= K $.
Since $G$ is a subgroup of index 2 of $3 \rtimes S_3$, we conclude that $N_{\Aut(S)}(G)= 3 \rtimes S_3 \simeq D_{18}$.
\end{proof}
\subsection*{Type $E_6$} In the following tables, we list  
fixed points and invariant lines and the orbits that they cut on $S$.

    \begin{center}
    {\setlength{\extrarowheight}{2pt}
	\begin{tabular}{c@{\hskip 20pt}c@{\hskip 20pt}c@{\hskip 20pt}c}
	\hline	
	Fixed points 			& $T_{p_i}S$		& $T_{p_i}S \cap S$ & Eckardt point\\[1ex]
		\hline
	$p_2$			& 	 	$t_3=0$				& 	$t^3_0+t^3_1=0$ & yes	\\
	$p_3$			& 	 	$t_1=0$				& 	$t^2_2 t_3+t^3_0=0$ & no						\\[0.5ex]
		\hline
	\end{tabular}}
	\end{center}
	\vspace{5pt}

    \begin{center}
    {\setlength{\extrarowheight}{2pt}
	\begin{tabular}{c@{\hskip 20pt}c@{\hskip 20pt}c}
	\hline	
	Invariant lines $l_{p_i p_j}$ 			& $l_{p_i p_j}\cap S$ 		& Orbits in $l_{p_i p_j}\cap S$\\[1ex]
		\hline
	$l_{p_0 p_1}=\{t_2=t_3=0\}$			& 	 	$t_0^3+t_1^3=0$				& 	orbit of length $3$	\\
		
	$l_{p_0 p_2}=\{t_1=t_3=0\}$	&	 	$t_0^3=0$				&			fixed Eckardt point $p_2$					\\
		
	$l_{p_0 p_3}=\{t_1=t_2=0\}$			&	 	$t^3_0=0$				&			fixed point $p_3$	\\
		
	$l_{p_1 p_2}=\{t_0=t_3=0\}$			&	 	$t^3_1=0$				&			fixed Eckardt point $p_2$ \\
		
 	$l_{p_1 p_3}=\{t_0=t_2=0\}$	&	 	$t_3^2t_1+t^3_1=0$ &  	\begin{tabular}{@{}c@{}@{}@{}} fixed point $p_3$ and \\ orbit of length $2$ given by:\\ $q_1:=(0:i:0:1),$\\
 	$q_2:=(0:-i:0:1).$\end{tabular} 					\\
	
	$l_{p_2 p_3}=\{t_1=t_0=0\}$	&	 	$t^2_2t_3=0$				&			\begin{tabular}{@{}c@{}} fixed Eckardt point $p_2$ and \\ fixed point $p_3$\end{tabular}						\\[0.5ex]
		\hline
	\end{tabular}}
	\end{center}
	\vspace{5pt}
	Observe that the hypothesis of Lemma \ref{blow-upisaDelPezzosurfacecubic}.(2) holds for the orbit $\{q_1, q_2\}$. Indeed, the set $\{q_1, q_2\}$ is the only orbit of length two and $q_i$ are not Eckardt points, since \begin{align*}
	    T_{q_i}S \cap S = \{t_1 \pm it_3=t^2_2t_3+ t^3_0=0\}
	\end{align*}
are cuspidal cubic curves. 
 Moreover, the pencil of planes containing $\{q_1, q_2\}$ does not cut any conic on $S$ 
and $q_i$ is not contained in the tangent space of $q_j$, for $i \neq j$, by Remark \ref{orbitbitangent}. We conclude that $S$ is not $G$-birationally superrigid and the group $\Bir^G(S)$ is generated by biregular $G$-automorphisms of $S$, a Bertini involution and a Geiser involution whose base loci are aligned: $\{q_1,q_2\}$ and $p_3$ respectively. 

The Bertini involution with base point $q_1$ and $q_2$ is the deck transformation of the double cover
\begin{align*}
    \psi: S & \to \PP^3,\\
    (t_0:t_1:t_2:t_3)&\mapsto (t_1^2+t_3^2:t_0^2:t_0t_2:t_2^2), 
\end{align*}
and it is given explicitly by 
\[\varphi_{q_1 q_2}(t_0:t_1:t_2:t_3)=(t_0: t'_1: t_2:t'_3),\]
where 
\[
    t'_1:=-t_1-\frac{2(t^2_1+t^2_3)t^3_0}{t^4_2+(t^2_1+t^2_3)^2} \quad\text{ and }\quad
    t'_3:=-t_3-\frac{2t^2_2t^3_0}{t^4_2+(t^2_1+t^2_3)^2}.
\]
The Geiser involution with base point $p_3$ can be written as
\[\varphi_{p_3}(t_0:t_1:t_2:t_3)=(t_0: t_1: t_2:-t_3-\frac{t^2_2}{t_1}).\]

\begin{lemma}\label{finiteE6}
The group $\Bir^G(S)$ is not a finite group.
\end{lemma}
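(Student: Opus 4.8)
The plan is to follow the template of Lemmas \ref{finiteBirnoncyclic} and \ref{finiteBirE6(a1)}: it suffices to exhibit a single generator of $\Bir^G(S)$ of infinite order, and the natural candidate is the composition $\varphi_{q_1 q_2} \circ \varphi_{p_3}$ of the Bertini and Geiser involutions. First I would record that the three base points $p_3$, $q_1$, $q_2$ are collinear — they are exactly the intersection $l_{p_1 p_3}\cap S$ of the invariant line $l_{p_1 p_3}=\{t_0=t_2=0\}$ with $S$ — and that both involutions fix the pencil of plane cubic curves
\[
C_{(\alpha:\gamma)} = \{\alpha t_0 + \gamma t_2 = t_3^2 t_1 + t_2^2 t_3 + t_0^3 + t_1^3 = 0\}
\]
cut out by the planes through this line. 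Indeed $\varphi_{p_3}$ and $\varphi_{q_1 q_2}$ leave the coordinates $t_0$ and $t_2$ unchanged, so they preserve each member of the pencil, and every $C_{(\alpha:\gamma)}$ passes through $p_3,q_1,q_2$. I would then fix a nonsingular member $C=C_{(\alpha:\gamma)}$, choose an inflection point $O$ as the origin of its group law, and analyse the two induced involutions of the elliptic curve $(C,O)$.

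The Geiser involution is the easy half: since $\varphi_{p_3}(p)$ shares the coordinates $t_0,t_1,t_2$ with $p$, it is the residual point of the line $\overline{p_3 p}$, so $p_3,p,\varphi_{p_3}(p)$ are collinear on the plane cubic $C$ and hence
\[
p + \varphi_{p_3}(p) + p_3 \sim H, \qquad\text{i.e.}\qquad \varphi_{p_3}(p) = -p - p_3,
\]
where $H=\mathcal{O}(1)|_C\sim 3O$. The Bertini involution is the crux. Here I would use that blowing up $q_1,q_2$ produces a degree-one del Pezzo surface $\tilde{S}$ on which $\varphi_{q_1 q_2}$ is the deck transformation of the degree-two map $\varphi_{|-2K_{\tilde{S}}|}$; restricting this double cover to the strict transform $\tilde{C}\sim -K_{\tilde{S}}$ of $C$ realises $\varphi_{q_1 q_2}|_C$ as the involution attached to the degree-two complete linear system $(-2K_{\tilde{S}})|_{\tilde{C}}$. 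Pushing this class down to $C$ gives $2(H-q_1-q_2)\sim 6O-2q_1-2q_2$, so $\varphi_{q_1 q_2}$ acts as the reflection
\[
p + \varphi_{q_1 q_2}(p) + 2q_1 + 2q_2 \sim 2H, \qquad\text{i.e.}\qquad \varphi_{q_1 q_2}(p) = -p - 2q_1 - 2q_2 .
\]

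Composing the two reflections, I would obtain that $\varphi_{q_1 q_2}\circ\varphi_{p_3}$ restricts to $C$ as the translation $p \mapsto p + \tau$ by the fixed class $\tau = p_3 - 2q_1 - 2q_2$. Feeding in the collinearity relation $p_3+q_1+q_2\sim H$ of the base points (equivalently $q_1+q_2=-p_3$ in the group law) collapses this to $\tau = 3p_3$, intrinsically $\tau \sim 3p_3 - H \in \Pic^0(C)$. Thus the composition has infinite order as soon as the class $3p_3-H$ is non-torsion, equivalently the base point $p_3$ is not a torsion point of $(C,O)$; as in the earlier lemmas this is checked for one explicit nonsingular member of the pencil with MAGMA, and infinite order on a single invariant curve forces infinite order of the birational self-map. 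The main obstacle is the middle step: unlike the Geiser involution, the Bertini involution is not given by a collinearity, so justifying the reflection constant $-2(q_1+q_2)$ requires the degree-one del Pezzo / anticanonical double-cover computation rather than a one-line incidence argument. Everything else is routine bookkeeping in $\Pic^0(C)$ together with the MAGMA verification.
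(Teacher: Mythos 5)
Your argument is essentially the paper's: both restrict the two involutions to the invariant pencil of plane cubics cut by planes through the line containing $p_3,q_1,q_2$, derive the reflection relations $p+\varphi_{p_3}(p)+p_3=0$ and $p+\varphi_{q_1q_2}(p)+2q_1+2q_2=0$ in the group law of a nonsingular member, conclude that the composition is translation by $\pm 3p_3$, and verify non-torsion of $p_3$ by computer. The only cosmetic differences are the order of composition and your justification of the Bertini relation via $\left.(-2K_{\tilde S})\right|_{\tilde C}$ on the degree-one blow-up, where the paper instead invokes the conic in the plane through $q_1,q_2,p$ tangent to the cubic at $q_1$ and $q_2$ — the same geometry seen from two sides.
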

\begin{proof} The proof is analogous to the one of Lemma \ref{finiteBirE6(a1)}. It is enough to prove that the composition $\varphi_{p_3}\circ \varphi_{q_1 q_2}$ has infinite order. Note that:
\begin{enumerate}
    \item for any $p\in S$ the point $\varphi_{p_3}(p)$ is aligned with $p_3$ and $p$;
    \item for any $p \neq p_3$, the points $\varphi_{q_1 q_2}(p)$ and $p$ belong to a conic contained in the plane $\Pi_{q_1 q_2 p}$, spanned by $q_1$, $q_2$ and $p$, and tangent to $S \cap \Pi_{q_1 q_2 p}$ at $q_1$ and $q_2$;
    \item the involutions $\varphi_{p_3}$ and $\varphi_{q_1 q_2}$ fix the pencil of cubic curves
    \[C_{(\lambda:\mu)}=\{\lambda t_0 - \mu t_2 = t^2_3t_1+t^2_2t_3+t^3_1+ t^3_2=0\}.\]
\end{enumerate}
Fix $(\lambda:\mu)\in \PP^1_{(\lambda:\mu)}$ such that $C_{(\lambda:\mu)}$ is nonsingular and choose $O$ an inflection point on $C_{(\lambda:\mu)}$. Due to the previous facts, the following relations for the elliptic curve $(C_{(\lambda:\mu)},O)$ hold:
\begin{align*}
    q_1+q_2+p_3& =0;\\
    2q_1+2q_2+p+\varphi_{q_1 q_2}(p) & =0;\\
    p_3+\varphi_{q_1 q_2}(p)+\varphi_{p_3}\circ \varphi_{q_1 q_2}(p) & =0.
\end{align*}
In particular, 
\[\varphi_{p_3}\circ \varphi_{q_1 q_2}(p)= p-3p_3.\]
One can check (use MAGMA) that for a suitable choice of $(\lambda:\mu)$ (\emph{e.g.} $(1:1)$), the point $p_3$ is not a torsion point. This implies that $\varphi_{p_3}\circ \varphi_{q_1 q_2}$ has infinite order.
\end{proof}

	We complete the list of generators of the group $\Bir^G(S)$, observing that the only biregular $G$-automorphisms of $S$ are the elements of $G$ itself. Note that up to a change of coordinates \cite[6.5. Case 3. Type III]{DI}, we can suppose that $S$ is given by the equation
	\[s_0^3+s_1^3+s_2^3+s_3^3+3(\sqrt{3}-1)s_1s_2s_3=0\]
	and a generator $g$ of $G$ acts via 
	\begin{align*}
	 g(s_0& :s_1:s_2:s_3)=\\
	 & =(\sqrt{3}\epsilon_3 s_0: s_1+s_2+s_3: s_1+\epsilon_3 s_2+\epsilon_3 ^2s_3: s_1+\epsilon_3^2 s_2+\epsilon_3 s_3).
	\end{align*}
	The automorphism group of $S$ is $H_3(3)\rtimes 4$, where $H_3(3)$ is the Heisenberg group of unipotent $3 \times 3$-matrices over the finite field $\mathbb{F}_3$, generated by 
	\begin{align*}
	    \tilde{g}_1(s_0 :s_1:s_2:s_3)& =(s_0:s_1:\epsilon_3 s_2: \epsilon_3^2 s_3)\\
	    \tilde{g}_2(s_0 :s_1:s_2:s_3)& =(s_0:s_2:s_3: s_1)
	\end{align*}
	and $4$ is the cyclic group generated by \begin{align*}
	 \tilde{g}_4(s_0& :s_1:s_2:s_3)=\\
	 & =(\sqrt{3} s_0: s_1+s_2+s_3: s_1+\epsilon_3 s_2+\epsilon_3 ^2s_3: s_1+\epsilon_3^2 s_2+\epsilon_3 s_3),
	\end{align*}
	 see \cite[Theorem 6.14, Type III]{DI}. The group $G$ is isomorphic to $3 \rtimes 4 \simeq 12$, where $3$ is generated by   $[\tilde{g}_1, \tilde{g}_2](s_0:s_1:s_2:s_3)=(\epsilon_3 s_0:s_1:s_2:s_3)$, \emph{i.e.} the centre of $H_3(3)$.
	 \begin{lemma}
	 The group $G$ is self-normalising in $\Aut(S)$, \emph{i.e.} the normalizer of $G$ in $\Aut(S)$ is $G$ itself.
	 \end{lemma}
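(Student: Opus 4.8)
The plan is to prove the two inclusions $G \subseteq N_{\Aut(S)}(G)$ and $N_{\Aut(S)}(G)\subseteq G$, the first being automatic since $G$ is abelian. For the reverse inclusion I would first record how $G$ sits inside $\Aut(S)=H_3(3)\rtimes 4$. Writing $g=z\tilde g_4$ for the chosen generator, one has $z=g^4$ (the central element $[\tilde g_1,\tilde g_2]$ of $H_3(3)$, which generates its center $Z$) and $\tilde g_4=g^9$, so both $z$ and $\tilde g_4$ lie in the cyclic group $G$ and in particular commute. Since $\pi(g)$ generates the quotient $\pi\colon\Aut(S)\to 4=\Aut(S)/H_3(3)$, we get $G\cap H_3(3)=\langle z\rangle=Z$. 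As $g$ generates $G$, an element $h$ normalises $G$ if and only if $hgh^{-1}\in G$, in which case $hgh^{-1}$ is automatically an order-$12$ generator of $G$.

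Next I would cut down the possible values of $hgh^{-1}$ using $\pi$. The generators of $G$ are the $g^i$ with $\gcd(i,12)=1$; applying $\pi$ and using that $4$ is abelian gives $\pi(g^i)=\pi(g)^i=\pi(g)$, forcing $i\equiv 1\pmod 4$, so $hgh^{-1}\in\{g,g^5\}=\{z\tilde g_4,\,z^2\tilde g_4\}$. Now write $h=u\tilde g_4^k$ with $u\in H_3(3)$. Because $\tilde g_4\in G$ commutes with $g$, the factor $\tilde g_4^k$ drops out and $hgh^{-1}=ugu^{-1}$; using that $z$ is central in $H_3(3)$ and writing $\phi$ for conjugation by $\tilde g_4$ on $H_3(3)$, this rearranges to $hgh^{-1}=z\,u\phi(u)^{-1}\tilde g_4$. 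Comparing with the two admissible targets, we find that $h\in N_{\Aut(S)}(G)$ if and only if $u\phi(u)^{-1}\in\{1,z\}=Z$, with $k$ unconstrained.

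It then remains to solve $u\phi(u)^{-1}\in Z$. The automorphism $\phi$ fixes $Z$ (as $\tilde g_4$ commutes with $z$) and hence descends to a linear map $A$ of $H_3(3)/Z\cong\mathbb{F}_3^2$ of order dividing $4$, which one checks has order exactly $4$. Since $\mathbb{F}_3^\times$ contains no element of order $4$, such an $A$ cannot have $1$ as an eigenvalue, so $A-I$ is invertible over $\mathbb{F}_3$ and the map $\bar u\mapsto \bar u-A\bar u$ is a bijection of $H_3(3)/Z$. Consequently $u\phi(u)^{-1}\in Z$ forces $\bar u=0$, i.e. $u\in Z$, and then $u\phi(u)^{-1}=1$. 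Thus the solution set is exactly $Z$, and $N_{\Aut(S)}(G)=\{u\tilde g_4^k : u\in Z,\ 0\le k\le 3\}=\langle z,\tilde g_4\rangle=G$.

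The main obstacle is this last step, namely verifying that $\phi$ acts without nonzero fixed vectors on $H_3(3)/Z$, equivalently that $C_{H_3(3)}(\tilde g_4)=Z$. I would establish it either through the eigenvalue argument above (noting that $A$ preserves the commutator pairing, hence lies in $\operatorname{Sp}_2(\mathbb{F}_3)=\operatorname{SL}_2(\mathbb{F}_3)$, where an order-$4$ element has characteristic polynomial $x^2+1$ and thus $\det(A-I)=2\neq 0$), or, more concretely, by computing $\phi(\tilde g_1)$ and $\phi(\tilde g_2)$ explicitly from the matrix form of $\tilde g_4$ and reading off that no nontrivial combination is fixed modulo the center.
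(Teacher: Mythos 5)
Your proof is correct, and its skeleton matches the paper's: both arguments exploit the decomposition $\Aut(S)=H_3(3)\rtimes\langle\tilde{g}_4\rangle$ together with $\langle\tilde{g}_4\rangle\subseteq G$ and $G\cap H_3(3)=Z:=Z(H_3(3))$ to reduce the lemma to showing that the only elements of $H_3(3)$ normalising $G$ are the central ones, equivalently that conjugation by $\tilde{g}_4$ acts without nonzero fixed vectors on $H_3(3)/Z\simeq\mathbb{F}_3^2$. Where you genuinely diverge is in how this last point is verified. The paper conjugates $\tilde{g}_4$ by the explicit representatives $\tilde{g}_1,\tilde{g}_2,\tilde{g}_1\tilde{g}_2,\tilde{g}_1\tilde{g}_2^2$ of the four order-$3$ subgroups of $3^2$ and checks case by case that the result falls outside $G$; you instead observe that the induced map $A$ preserves the commutator pairing, hence lies in $\operatorname{SL}_2(\mathbb{F}_3)$, where an order-$4$ element has characteristic polynomial $x^2+1$, so $\det(A-I)=2\neq 0$ and $\bar{u}\mapsto\bar{u}-A\bar{u}$ is bijective. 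Your route explains \emph{why} all four of the paper's checks succeed and is robust under a change of generators; the paper's route needs no structure theory beyond the single conjugation rule $(\tilde{g}_1,\tilde{g}_2)\mapsto(\tilde{g}_2^2,\tilde{g}_1)$. Two small points: your set $\{1,z\}$ is a proper subset of $Z=\{1,z,z^2\}$ rather than equal to it, but you only use the inclusion $\{1,z\}\subseteq Z$ to force $\bar{u}=0$ (after which $u\phi(u)^{-1}=1$), so nothing breaks; and the bare remark that $\mathbb{F}_3^{\times}$ has no element of order $4$ does not by itself rule out eigenvalue $1$ (one also needs semisimplicity of $A$, automatic here since its order is prime to $3$), so your parenthetical $\operatorname{SL}_2$ argument or the direct computation of $A$ is the version to keep.
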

	 \begin{proof}
	 If $G \subsetneq N_{\Aut(S)}(G)$, then $[H_3(3), H_3(3)] = G \cap H_3(3) \subsetneq N_{\Aut(S)}(G) \cap H_3(3)$ as $\langle \tilde{g}_4\rangle \subseteq G$. In particular, the image of $N_{\Aut(S)}(G)\cap H_3(3)$ via the quotient map $H_3(3) \to H_3(3)/[H_3(3), H_3(3)]\simeq 3^2$, generated by the image of $\tilde{g}_1$ and $\tilde{g}_2$, is non-trivial. Note that the element $\tilde{g}_4$ acts on $H_3(3)$ by conjugation via $(\tilde{g}_1, \tilde{g}_2)\to (\tilde{g}^2_2, \tilde{g}_1)$, see \cite[Theorem 6.14, Type III]{DI}. As a result, we have
	 \begin{align*}
	     \tilde{g}_1^{-1}\tilde{g}_4\tilde{g}_1& =\tilde{g}_1^{-1}\tilde{g}^{-1}_2\tilde{g}_4\not\in G,\\
	     \tilde{g}_2^{-1}\tilde{g}_4\tilde{g}_2& =\tilde{g}_2^{-1}\tilde{g}_1\tilde{g}_4\not\in G,\\
	     (\tilde{g}_1\tilde{g}_2)^{-1} \tilde{g}_4(\tilde{g}_1\tilde{g}_2)& =\tilde{g}_2^{-1}\tilde{g}^{-1}_1\tilde{g}^{-1}_2\tilde{g}_1\tilde{g}_4\not\in G,\\
	     (\tilde{g}_1\tilde{g}^2_2)^{-1} \tilde{g}_4(\tilde{g}_1\tilde{g}^2_2)& =\tilde{g}_2\tilde{g}^{-1}_1\tilde{g}^{-1}_2\tilde{g}^{-1}_1\tilde{g}_4\not\in G.
	 \end{align*}
	 This implies that $N_{\Aut(S)}(G)\cap H_3(3)/[H_3(3),H_3(3)]=1$, which yields a contradiction. We conclude that $G$ is self-normalising in $\Aut(S)$.
	 \end{proof}
The results of this section are summarised in Theorem \ref{Gbirationalsuperrigidcubic}.

\section{$G$-birational superrigidity of Del Pezzo surfaces of degree 2}\label{G-birational superrigidity of Del Pezzo surfaces of degree 2}

In this section we prove Theorem \ref{Gbirationalsuperriddegree2} and we classify the Del Pezzo $G$-surfaces of degree $2$ which are not $G$-birationally superrigid. Recall that a Del Pezzo surface $S$ of degree 2 is a double cover of $\PP^2$ branched over a nonsingular quartic curve. The surface $S$ is an hypersurface of degree 4 in the weighted projective space $\PP(1,1,1,2)$ given by the equation
\[
F = t_3^2 + F_4(t_0,t_1,t_2),
\]
where $F_4$ is a polynomial of degree four. The covering map $\nu: S \rightarrow \PP^2$ is then given by the projection on the first three coordinates and the ramification curve $R$ is the intersection of $S$ with $\{ t_3 = 0 \}$.

As in the previous section, the proof of Segre-Manin theorem (Theorem \ref{GequivariantSegreManin}) implies that a minimal Del Pezzo $G$-surface of degree 2 is not $G$-birationally superrigid if and only if it admits a $G$-equivariant Bertini involution.

\begin{lemma}\label{bertinibasepoint}
Let $S$ be a Del Pezzo surface of degree 2.
Then, a point $p$ is the base locus of a Bertini involution if and only if $p$ lies neither on a $(-1)$-curve nor on the ramification locus of the double cover $\nu: S \to \PP^2$.
\end{lemma}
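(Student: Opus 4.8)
The plan is to mirror the strategy of Lemma \ref{blow-upisaDelPezzosurfacecubic}. By the construction of the Bertini involution in Step 3 of the proof of Theorem \ref{GequivariantSegreManin}, a point $p$ is the base locus of a Bertini involution exactly when the blow-up $f\colon \tilde{S}\to S$ at $p$ is a Del Pezzo surface of degree one; the involution is then the descent of the central deck transformation of $\varphi_{|-2K_{\tilde S}|}$. Hence it suffices to characterise when $\tilde S$ is Del Pezzo. First I would realise $S$ as the blow-up $\beta\colon S\to \PP^2$ at seven points $q_1,\dots,q_7$ in general position, and recall (cf. \cite[Exercise V.21.(1)]{B}) that a Del Pezzo surface of degree one is precisely the blow-up of $\PP^2$ at eight points in general position.

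I would then split into two cases. If $p$ lies on one of the exceptional $(-1)$-curves $E_i$, its strict transform on $\tilde S$ becomes a $(-2)$-curve, so $-K_{\tilde S}$ cannot be ample and $\tilde S$ is not Del Pezzo. Otherwise $q_8:=\beta(p)$ is a genuine point distinct from $q_1,\dots,q_7$, and $\tilde S$ is the blow-up of $\PP^2$ at $q_1,\dots,q_8$, which is Del Pezzo of degree one if and only if these eight points are in general position. Next I would translate the general position conditions on $q_8$: a line through $q_8$ and two of the $q_i$, a conic through $q_8$ and five of the $q_i$, and a cubic with a double point at some $q_i$ ($i\le 7$) passing through $q_8$ correspond respectively to $p$ lying on the strict transform of one of the $21$ lines, $21$ conics, or $7$ nodal cubics of $S$. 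Together with the exceptional-curve case, these account exactly for the condition ``$p$ lies on a $(-1)$-curve''.

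The one remaining general position condition, not of the above form, is that no irreducible cubic passes through $q_1,\dots,q_7$ simply while having a node at $q_8$; the claim to establish is that such a cubic exists if and only if $p\in R$. Since the net of cubics through $q_1,\dots,q_7$ is the anticanonical system $|-K_S|$ and the anticanonical map is the double cover $\nu\colon S\to \PP^2$, such a cubic corresponds, via the local isomorphism $\beta$ near $p$, to a member $D\in|-K_S|$ singular at $p$. Every member has the form $D=\nu^{-1}(\ell)$ for a line $\ell\subset\PP^2$. Away from $R$ the map $\nu$ is a local isomorphism, so $\nu^{-1}(\ell)$ is smooth at $p$; thus if $p\notin R$ no member of $|-K_S|$ is singular at $p$ and no such cubic exists. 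Conversely, if $p\in R$, taking $\ell$ to be the tangent line to $R$ at $\nu(p)$ yields a member $\nu^{-1}(\ell)$ with a node at the ramification point $p$. I expect this last equivalence to be the main obstacle: it is where the degree-two structure genuinely enters, and one must verify on the local model $t_3^2+F_4|_\ell=0$ that $D$ acquires an ordinary double point at $p$ precisely when $\ell$ is tangent to $R=\{F_4=0\}$ at $\nu(p)$.

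Combining the two cases, $\tilde S$ is a Del Pezzo surface of degree one if and only if $p$ lies on no $(-1)$-curve and $p\notin R$, which is the assertion of the lemma.
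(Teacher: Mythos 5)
Your proposal is correct and follows essentially the same route as the paper: realise $S$ as the blow-up of $\PP^2$ at seven points, reduce to checking when the eight points (including the image of $p$) are in general position, match the line/conic/singular-cubic-at-$q_i$ conditions with the $56$ $(-1)$-curves, and identify the remaining condition (an anticanonical member singular at $p$) with $p$ lying on the ramification curve via the local model $t_3^2+F_4=0$. The one wrinkle you should keep in mind, which the paper makes explicit, is that when the tangent line at $\nu(p)$ is a bitangent the divisor $\nu^{-1}(\ell)$ is reducible into two $(-1)$-curves rather than an irreducible nodal cubic, but this does not affect the equivalence since $p$ then lies on a $(-1)$-curve.
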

\begin{proof}
The proof is analogous to that of Lemma \ref{blow-upisaDelPezzosurfacecubic}.  Recall that a Del Pezzo surface of degree 2 is a blow-up of $\PP^2$ at points $q_1, \ldots, q_7$ in general position, see \cite[Exercise IV.8.(10).(a)]{B}. We need to check that the blow-up $\tilde{S}$ of $S$ at $p$ is a Del Pezzo surface, or equivalently that the seven points $q_i$ and the image of $p$ via the blow-down are in general position. We prove that if this is not the case, then $p$ lies on a $(-1)$-curve or on the ramification locus. Indeed, note that the strict transform of a line passing through two of the points $q_i$ or that of a conic through five of them or that of a singular cubic curve through seven of them, with one of the $q_i$ at the singular point, is a $(-1)$-curve. Similarly, the strict transform of a singular cubic curve through all of the $q_i$, singular at $p$, is an anticanonical divisor, hence the pullback of a line via $\nu$. Since this curve is singular at $p$, then $p$ lies on the ramification locus. 

Conversely, if $p$ lies on a $(-1)$-curve, the canonical class of the blow-up $S'$ of $S$ at $p$ has trivial intersection with the strict transform of the line, hence $-K_{S'}$ is not ample. On the other hand, if $p$ lies on the ramification locus, then the preimage of the tangent line to the branch locus via $\nu$ is either an irreducible anticanonical divisor, singular only at $p$, \emph{i.e.} the strict transform of a singular cubic passing through $q_i$, or the union of two $(-1)$-curve, if the line is bitangent to the branch locus.
\end{proof}

Our strategy to identify birational superrigid $G$-surfaces will then consist in finding the fixed points of the given $G$-action and checking if these points lie on the ramification locus or on $(-1)$-curves. Recall that $(-1)$-curves on Del Pezzo surfaces of degree 2 are contained in the preimage of a bitangent line of the branched quartic in $\PP^2$.  

\subsection{$G$-birational superrigidity for non-cyclic group}\label{Gbirationalsuperrigiditynoncyclicgroupdelpezzodegree2}
The minimal non-cyclic groups $G$ acting on $S$ and fixing a point have been classified by Dolgachev and Duncan, the possible fixed points lie either on the ramification curve or they are the intersection of four $(-1)$-curves, see cases 2A and 2B of \cite[Theorem 1.1]{DD}. We conclude that $S$ is $G$-birationally superrigid by Theorem \ref{GequivariantSegreManin} and Lemma \ref{bertinibasepoint} and concludes the proof of Theorem \ref{Gbirationalrigid2}.
It remains to analyse cyclic groups.

\subsection{$G$-birational superrigidity for cyclic groups}\label{section $G$-birational superrigidity for cyclic groups 2}

We describe the fixed locus of minimal cyclic groups $G$ according to Dolgachev and Iskoviskikh classification. As before, we stick to their notation. Recall in particular that $\epsilon_n$ is a primitive $n$-th root of the unit and $F_i$ is a polynomial of degree $i$.

\begin{prop}\label{listcyclicgroupDelPezzosurface2}\cite[Section 6.6.]{DI} Let $S = V(F)$ be a Del Pezzo surface of degree $2$, endowed with a minimal action of a cyclic group $G$ of automorphisms, generated by $g$. Then, one can choose coordinates in such a way that $g$ and $F$ are given in the following list. 
\begin{enumerate}
    \item $A^7_1$, Order 2, $g(t_0:t_1:t_2:t_3)=(t_0:t_1:t_2:-t_3)$,
    \[ F= t_3^2 + F_4(t_0, t_1, t_2); \]
    
    \item $2A_3+A_1$, Order 4, $g(t_0:t_1:t_2:t_3)=(t_0: t_1: it_2: t_3)$,
    \[F = t_3^2 +t_2^4 + F_4(t_0,t_1);\]

    \item $E_7(a_4)$, Order 6, $g(t_0:t_1:t_2:t_3)=(t_0: t_1: \epsilon_3 t_2: -t_3)$,
    \[F = t_3^2 +t_2^3 F_1(t_0,t_1) + F_4(t_0,t_1);\]
    
    \item $A_5+A_1$, Order 6, $g(t_0:t_1:t_2:t_3)=(t_0: -t_1: \epsilon_3 t_2: -t_3)$,
    \[F = t_3^2 +t_2^3 t_0 + t_0^4 +t_1^4 + at_0^2 t_1^2;\]

    \item $D_6(a_2)+A_1$, Order 6, $g(t_0:t_1:t_2:t_3)=(t_0: \epsilon_3 t_1: \epsilon_3^2 t_2: -t_3)$,
    \[F = t_3^2 +t_0(t_0^3 +t_1^3 + t_2^3) + t_1t_2(\alpha t_0^2 + \beta t_1t_2);\]

    \item $E_7(a_2)$, Order 12, $g(t_0:t_1:t_2:t_3)=(t_0: i t_1: \epsilon_3 t_2: t_3)$,
    \[ F = t_3^2 + t_0^4 + t_1^4 + t_0t_2^3; \]
    
    \item $E_7(a_1)$, Order 14, $g(t_0:t_1:t_2:t_3)=(\epsilon_7 t_0: \epsilon_7^4 t_1: \epsilon_7^2 t_2: -t_3)$,
    \[F = t_3^2 + t_0^3t_1 + t_1^3 t_2 + t_2^3t_0 ; \]

    \item $E_7$, Order 18, $g(t_0:t_1:t_2:t_3)=(t_0: \epsilon_3 t_1: \epsilon_9^2 t_2: -t_3)$,
    \[F = t_3^2 + t_0^4 + t_0t_1^3 + t_2^3t_1. \]
    
\end{enumerate}
\end{prop}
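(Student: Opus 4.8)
The plan is to treat this as a lattice-theoretic classification on the $E_7$ root system, followed by an explicit realisation, organised in four stages. First I would reduce the problem to the Weyl group. Writing $\Pic(S)\cong\Z^{1,7}$, the orthogonal complement $K_S^{\perp}$ is the negative-definite $E_7$ root lattice, and $\Aut(S)$ acts on $\Pic(S)$ through a faithful homomorphism $\rho\colon\Aut(S)\to W(E_7)$. The Geiser involution $\gamma$ (the deck transformation $t_3\mapsto -t_3$) is central with $\rho(\gamma)=-\operatorname{id}$, and one has a direct product $\Aut(S)=\langle\gamma\rangle\times\Aut(B)$, where $\Aut(B)\subset\operatorname{PGL}(3,\C)$ is the projective automorphism group of the branch quartic $B=\{F_4=0\}$. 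The $G$-action is minimal exactly when $\Pic^{G}(S)$ has rank one, i.e.\ when $g$ has no eigenvalue $1$ on $K_S^{\perp}\otimes\Q$; equivalently $\rho(g)$ is an \emph{elliptic} (fixed-point-free) element of $W(E_7)$. Since $\rho$ is faithful, the order of $g$ equals that of $\rho(g)$, which is how the orders $2,4,6,6,6,12,14,18$ in the list arise.

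Second, I would enumerate the admissible conjugacy classes. By Carter's classification the conjugacy classes of $W(E_7)$ are labelled by admissible diagrams, and $\rho(g)$ is elliptic precisely when its diagram has full rank $7$. Listing these classes together with their characteristic polynomials on $K_S^{\perp}\otimes\Q$ yields the candidate types. Crucially, not every elliptic class is realised on a \emph{smooth} surface: a class survives only if the semi-invariant quartics it prescribes admit a nonsingular member. For example the order-$8$ Coxeter class of the full-rank subsystem $A_7\subset E_7$ is elliptic, yet it must be discarded because the invariant quartics it would prescribe are all singular. After this filtering one is left with exactly the eight types $A_1^7$, $2A_3+A_1$, $E_7(a_4)$, $A_5+A_1$, $D_6(a_2)+A_1$, $E_7(a_2)$, $E_7(a_1)$, $E_7$.

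Third, I would pass to explicit coordinates. A finite-order element of $\operatorname{PGL}(3,\C)$ is diagonalisable, so after a coordinate change the image of $g$ in $\Aut(B)$ becomes $\operatorname{diag}(\epsilon^{a_0},\epsilon^{a_1},\epsilon^{a_2})$ on $(t_0:t_1:t_2)$, with the eigenvalues dictated by the Carter type; the weight on $t_3$ is then forced, up to composing with $\gamma$, by requiring $t_3^2$ and $F_4$ to be semi-invariant of the same character, so that $F=t_3^2+F_4$ is semi-invariant. Imposing semi-invariance selects the monomials of $F_4$ lying in a single character space, and requiring $B$ to be nonsingular (together with minimality) pins down which monomials must actually occur; residual rescalings then remove redundant parameters. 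For instance, for $2A_3+A_1$ the weight $\operatorname{diag}(1,1,i)$ forces $F_4=t_2^4+F_4(t_0,t_1)$, reproducing the listed equation, and the remaining seven cases are obtained in the same way.

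Finally, for each normal form I would confirm the Carter type by computing the characteristic polynomial of $g^{*}$ on $K_S^{\perp}$ — most efficiently through the topological and holomorphic Lefschetz fixed-point formulas, which recover $\operatorname{tr}(g^{*n})$ from the fixed loci, or by tracking the induced permutation of the $56$ lines — and comparing with Carter's table. I expect the main obstacle to lie precisely in this combinatorial bookkeeping: enumerating the elliptic classes of $W(E_7)$ without omission, correctly excluding those that force $B$ to be singular, and rigorously matching each abstract class to its weight vector $(a_0,a_1,a_2,a_3)$ and its equation. The geometry within any single case is routine; the real difficulty is in the completeness of the list and in the class-to-equation dictionary.
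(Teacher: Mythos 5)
The paper offers no proof of this proposition: it is quoted directly from Dolgachev--Iskovskikh \cite[\S 6.6]{DI}, so the ``paper's own proof'' is just that citation. Your outline is a faithful reconstruction of the strategy of the cited source --- the faithful representation $\Aut(S)\to W(E_7)$ on $\Pic(S)$ with the Geiser involution mapping to $-\operatorname{id}$, minimality read off as ellipticity of $\rho(g)$ on $K_S^{\perp}$, Carter's classification of the full-rank (elliptic) classes, elimination of the classes not realisable by an automorphism of a nonsingular branch quartic, and verification of the resulting normal forms by trace/Lefschetz computations --- so it is correct in approach and essentially the same as the external proof the paper relies on.
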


We proceed with an analysis case by case.
 
\subsection*{Type $A^7_1$}

    The generator $g$ is the standard Geiser involution of the surface $S$ leaving the ramification curve $\{t_3=F_4(t_0,t_1,t_2)=0\}$ fixed. Hence, the surface is $G$-birationally superrigid.

\subsection*{Type $2A_3+A_1$} 
    
    The curve $S \cap \{ t_2=0 \}$ is fixed by the action of $G$. It is the preimage of the line $l=\{ t_2 = 0 \}$ under the double cover $\nu$. The intersection of $l$ with the branched quartic 
    $$
    C= \{ t_2^4 + F_4(t_0,t_1)=0\}
    $$ is simply given by $F_4(t_0,t_1)=0$. Notice that the polynomial $F_4$ has four distinct roots as $C$ is nonsingular, hence there are four distinct intersection points and $l$ is not a bitangent line of $C$. This implies that every point in the preimage of $l$ is the base locus of a Bertini involution with the exception of the preimages of the four points of intersection with $C$ and of the points of intersection with the bitangent lines of $C$. In other words, $\Bir^G(S)$ is generated by $G$-automorphisms and infinitely many Bertini involutions, in particular $S$ is not $G$-birationally superrigid.
    
    To complete the list of generators of $\Bir^G(S)$ it suffices to compute the normalizer $N_{\Aut(S)}(G)$. Notice that up to a linear change of coordinates in the variables $t_0, t_1$, the equation $F$ can be written as
    \[
    F = t_3^2 + t_2^4 + t_0^4 + at_0^2t_1^2 + t_1^4.
    \]

    The automorphism group $\Aut(S)$ depends on the parameter $a$ and in each case we compute the normalizer $N_{\Aut(S)}(G)$ of $G$ in $\Aut(S)$:
    \begin{enumerate}
        \item if $a = 0$, then $\Aut(S) \simeq 2 \times (4^2 \rtimes S_3)$ (cf. \cite[Theorem 6.17, Type II]{DI}), where $2$ is generated by $\gamma(t_0:t_1:t_2: t_3)=(t_0:t_1:t_2:-t_3)$, the symmetric group $S_3$ is generated by the transpositions
        \begin{align*}
        \tau(t_0:t_1:t_2: t_3)= (t_1:t_0: t_2:t_3)\\
        \sigma(t_0:t_1:t_2: t_3) =(t_0:t_2:t_1:t_3) 
        \end{align*}
        and $4^2$ is generated by \begin{align*}
        g_1(t_0:t_1:t_2: t_3)& =( t_0: it_1: t_2 : -t_3) \\ g_2(t_0:t_1:t_2: t_3)& = \sigma g_1 \sigma(t_0:t_1:t_2: t_3)= (t_0: t_1: it_2 : -t_3), 
        \end{align*}
        subject to the following relations
        \[\tau g_2 \tau = g_2 \qquad \tau g_1 \tau = g_1^{-1}g_2^{-1} = g_1^3g_2^3.\]
        In particular, the group $G$ is generated by $g=\gamma g_2$. Notice that $\langle g_2 \rangle$ is central in $4^2 \rtimes 2 = \langle \tau, g_1, g_2 \rangle$ and therefore it is central in $2 \times 4^2 \rtimes 2$. Since
    \begin{align*}
    (\sigma\tau)g(\sigma\tau)^{-1} & = \gamma g_1 \notin G \\
    (\tau\sigma\tau)g(\tau\sigma\tau)^{-1} & = (\tau\sigma)g(\tau\sigma)^{-1} = \gamma g_1^3g_2^3 \notin G,
    \end{align*}
    we conclude that $N_{\Aut(S)}(G) = \langle \gamma, \tau, g_1, g_2 \rangle \simeq 2 \times 4^2 \rtimes 2$.
        \item if $a = \pm2\sqrt{3}i $, then $\Aut(S) \simeq 2 \times 4A_4$ (cf. \cite[Theorem 6.17, Type III]{DI}), where $2$ is generated by  $\gamma(t_0:t_1:t_2: t_3)=(t_0:t_1:t_2:-t_3)$ and $4A_4$ is a central extension of the alternating group $A_4$ generated by 
        \begin{align*}
            g_1(t_0:t_1:t_2: t_3) & =(t_1: t_0: t_2: -t_3),\\
            g_2(t_0:t_1:t_2: t_3) & = (it_1: -it_0: t_2: -t_3),\\
            g_3(t_0:t_1:t_2: t_3) & = (\epsilon_8^7t_0 + \epsilon_8^7 t_1:  \epsilon_8^5 t_0 + \epsilon_8t_1 : \sqrt{2}\epsilon_{12} : 2 \epsilon_6 t_3),\\
            c(t_0:t_1:t_2: t_3) & =(t_0: t_1: it_2:-t_3).
        \end{align*}
        Since $c$ is central in $\Aut(S)$ and $g = \gamma c$, we conclude that $N_{\Aut(S)}(G) = \Aut(S)$.
        \item if $a \neq 0, \pm2\sqrt{3}i $, then $\Aut(S) \simeq 2 \times AS_{16}$, where  $AS_{16}$ is a non-abelian group of order $16$ isomorphic to $2 \times 4 \rtimes 2$ (c.f. \cite[Tables 1 \& 6]{DI}). The generator of $\Aut(S)$ coincide with that of the previous case with the exception of the generator $g_3$. Hence, as in the previous case, $g$ is a central element and $N_{\Aut(S)}(G) = \Aut(S)$.
    \end{enumerate}
    
\subsection*{Type $E_7(a_4)$}
    
    The fixed locus is given by $S \cap \{ t_2=t_3=0\}$ and $(0:0:1:0)$. In particular all fixed points lie on the ramification curve and therefore they do not give rise to Bertini involutions, thus $S$ is $G$-birationally superrigid. \medskip

\subsection*{Types $A_5+A_1$, $D_6(a_2)+A_1$, $E_7$ and $E_7(a_1)$}
    
    The fixed locus of each of these groups is contained in the set
    \[
    \{ (1:0:0:0), (0:1:0:0), (0:0:1:0)\}
    \]
    of points on the ramification curve, hence $S$ does not admit any Bertini involution and it is $G$-birationally superrigid. \medskip

\subsection*{Type $E_7(a_2)$}
    
    The fixed locus consists of the point $(0: 0: 1: 0)$ lying on the ramification curve and two points
    \[
    p_1 =(1: 0: 0: i), \quad p_2=(1: 0: 0: -i).
    \]
    These points are mapped of the point $p=(1:0:0)$ via of the covering map $\nu$. The branch locus is given by
     \[
    C = \{  t_0^4 + t_1^4 + t_0t_2^3 = 0\}.
    \]
    Suppose $q=(q_0:q_1:q_2)$ is a point in $C$ whose tangent line
    \[
    T_{q}C = \{ (4q_0^3+q_2^3)t_0 + 4q_1^3t_1 +3q_0q_2^2t_2 = 0 \}
    \]
    passes through $p$, then $q \in C \cap \{t_2^3 = -4t_0^3 \}$. This intersection consists of 12 distinct points
    \[
    (1: {3}^{\frac{1}{4}}i^j:{4}^{\frac{1}{3}}\epsilon_6^k), \quad j=1,2,3,4 \quad  k=1,3,5.
    \]
    In other words, the lines tangent to $C$ and passing through the possible $q$ are given by
    \[
    \{  -2 \cdot {3}^{\frac{3}{4}}i^jt_1 + 3 \cdot 2^{\frac{1}{3}}\epsilon_3^kt_2 = 0 \}, \quad j=1,2,3,4 \quad k = 1,2,3,
    \]
    which are pairwise distinct and intersect $C$ in three distinct points each, and hence, are not bitangent lines. Therefore $p_1$ and $p_2$ are not in any $(-1)$-curve and it follows that $S$ is not $G$-birationally superrigid.
    
    The Bertini involution with base point $p_1$ is the deck transformation of the map
\begin{align*}
    \psi_1: S & \to \PP^3,\\
    (t_0:t_1:t_2:t_3)&\mapsto (t_1^2:t_1t_2:t_2^2:t_3-it_0^2), 
\end{align*}
and it is given explicitly by 
\[\varphi_{p_1}(t_0:t_1:t_2:t_3)=(t_0': t_1: t_2:t'_3),\]
where 
\[
    t'_0=-t_0+\frac{it_2^3}{2(t_3-it_0^2)} \quad\text{ and }\quad
    t'_3=-it_0^2-\frac{t_1^4}{t_3-it_0^2} -\frac{it_2^6}{4(t_3-it_0^2)^2}.
\]
Similarly, the involution with base point $p_2$ is the deck transformation of the map
\begin{align*}
    \psi_2: S & \to \PP^3,\\
    (t_0:t_1:t_2:t_3)&\mapsto (t_1^2:t_1t_2:t_2^2:t_3+it_0^2), 
\end{align*}
therefore 
\[\varphi_{p_2}(t_0:t_1:t_2:t_3)=(t_0': t_1: t_2:t'_3),\]
where 
\[
    t'_0=-t_0-\frac{it_2^3}{2(t_3+it_0^2)} \quad\text{ and }\quad
    t'_3=it_0^2-\frac{t_1^4}{t_3+it_0^2} +\frac{it_2^6}{4(t_3+it_0^2)^2}.
\]

\begin{lemma}\label{finiteE7a14}
    The group $\Bir^G(S)$ is infinite.
\end{lemma}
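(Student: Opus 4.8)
The plan is to follow verbatim the strategy of Lemmas \ref{finiteBirE6(a1)} and \ref{finiteE6}. Since $\Bir^G(S)$ is generated by $\Aut^G(S)$ together with the two Bertini involutions $\varphi_{p_1}$ and $\varphi_{p_2}$, and both lie in $\Bir^G(S)$ (their base points are $G$-fixed), it suffices to exhibit one element of infinite order, and I would show that the composition $\varphi_{p_2}\circ\varphi_{p_1}$ is such an element. First I would introduce the pencil of anticanonical curves through the base points, namely $C_{(\lambda:\mu)}=\{\lambda t_1-\mu t_2=F=0\}$, which is the preimage under $\nu$ of the pencil of lines through $p=(1:0:0)=\nu(p_1)=\nu(p_2)$ in $\PP^2$. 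Since both $\varphi_{p_1}$ and $\varphi_{p_2}$ leave the coordinates $t_1$ and $t_2$ unchanged, as is visible in their explicit equations, they preserve each member of this pencil, and for $(\lambda:\mu)$ general $C_{(\lambda:\mu)}$ is a smooth curve of genus one containing both $p_1$ and $p_2$.

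The key geometric input is the description of how $\varphi_{p_i}$ acts on a smooth member $C=C_{(\lambda:\mu)}$. Blowing up $S$ at $p_1$ produces a degree-one Del Pezzo surface $\tilde{S}_1$ by Lemma \ref{DelPezzosurfacefromblow-up}, on which $\varphi_{p_1}$ lifts to the Bertini involution. The members of $|-K_{\tilde{S}_1}|$ are the strict transforms of the anticanonical curves of $S$ through $p_1$; as any two such curves meet on $S$ only at $p_1$ and $p_2$, and $(-K_{\tilde{S}_1})^2=1$, the unique base point of $|-K_{\tilde{S}_1}|$ is $p_2$. Since the Bertini involution of a degree-one Del Pezzo surface restricts to each smooth anticanonical curve as the inversion about the base point (the map $|-2K|$ restricts to $|2O|$, whose deck transformation is $x\mapsto -x$), I conclude that $\varphi_{p_1}$ acts on $C$ as the inversion centred at $p_2$, and symmetrically $\varphi_{p_2}$ acts as the inversion centred at $p_1$.

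I would then fix a group law on the elliptic curve $C$, for instance with origin a ramification point of $\nu|_C$, so that the hyperelliptic involution swapping $p_1$ and $p_2$ is $x\mapsto -x$ and hence $p_1+p_2=0$. With this normalisation the inversions are recorded by
\[
\varphi_{p_1}(p)+p=2p_2,\qquad \varphi_{p_2}(p)+p=2p_1,
\]
whence
\[
\varphi_{p_2}\circ\varphi_{p_1}(p)=p+2(p_1-p_2)=p+4p_1.
\]
Thus $\varphi_{p_2}\circ\varphi_{p_1}$ is a translation by $4p_1$ on every smooth member of the pencil, and it has infinite order as soon as $p_1$ is not a torsion point of some $C_{(\lambda:\mu)}$.

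The main obstacle is precisely this last non-torsion verification, which as in the previous lemmas I would settle by an explicit computation (using MAGMA) on one convenient member, \emph{e.g.} $C_{(1:1)}=\{t_2=t_1,\ t_3^2+t_0^4+t_1^4+t_0t_1^3=0\}$, on which $p_1=(1:0:i)$ and $p_2=(1:0:-i)$ in the weighted coordinates $(t_0:t_1:t_3)$: checking that $p_1$ is not torsion on this genus-one curve forces $\varphi_{p_2}\circ\varphi_{p_1}$ to have infinite order, so that $\Bir^G(S)$ is infinite. The only other point requiring care is the identification of the inversion centre with $p_2$ rather than with a point of the exceptional divisor over $p_1$; here the computation $(-K_{\tilde{S}_1})^2=1$, which forces a single base point for $|-K_{\tilde{S}_1}|$, is what pins it down.
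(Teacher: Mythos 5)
Your proof is correct and follows the same overall strategy as the paper's: both restrict the two Bertini involutions to the $G$-invariant pencil of genus-one curves lying over the lines through $\nu(p_1)=\nu(p_2)$, show that the composition acts on each smooth member as translation by the class $2(p_1-p_2)$, and reduce the infinitude of $\Bir^G(S)$ to a MAGMA verification that this class is non-torsion on one member. Where you genuinely differ is in how the relations $\varphi_{p_i}(p)+p=2p_j$ are established. The paper passes to an affine chart, realises $C_{(\lambda:\mu)}$ as a plane cubic, and reads the relations off from conics tangent to that cubic at $p_i$ and at a chosen origin; you instead argue intrinsically that the blow-up of $S$ at $p_i$ is a degree-one Del Pezzo surface whose anticanonical pencil has unique base point $p_j$ (pinned down by $(-K)^2=1$ together with the fact that every member passes through $p_j$), so that its Bertini involution restricts to each smooth anticanonical member as the inversion about $p_j$, via $|-2K|$ restricting to $|2p_j|$. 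Your derivation is cleaner and more conceptual, and your normalisation of the group law by a ramification point of $\nu$ correctly yields $p_1+p_2=0$. What the paper's explicit plane-cubic model buys is the concreteness of the final computation: it chooses the fibre $2\lambda^3+17\mu^3=0$ precisely so that the resulting cubic has a convenient rational point, whereas with $C_{(1:1)}$ you would still need to produce a Weierstrass model of $\{t_3^2+t_0^4+t_1^4+t_0t_1^3=0\}\subset\PP(1,1,2)$ before handing it to MAGMA --- a routine but nontrivial step that you have correctly flagged as the one remaining verification.
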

\begin{proof} The proof is analogous to the one of Lemma \ref{finiteBirE6(a1)} and Lemma \ref{finiteE6}. It is enough to prove that the composition $\varphi_{p_1}\circ \varphi_{p_2}$ has infinite order. To this aim, note that the involution $\varphi_{p_1}$ and $\varphi_{p_2}$ fix the pencil of curves of genus one
\[ C_{(\lambda:\mu)}    = \{\lambda t_1 - \mu t_2 = t^2_3+t_0^4+t_1^4+ t_0t^3_2=0\}. \]
In particular, for a general choice of  $(\lambda: \mu)$, we have that     
\[ C_{(\lambda:\mu)}=\{\lambda t_1 - \mu t_2 = \mu^3 (t^2_3+t_0^4+t_1^4)+ \lambda^3 t_0t^3_1=0\}\subseteq \PP(1,1,2)_{(t_0:t_1:t_3)}.\]
In the chart $\{ s_1:=t_1- r_0 t_0 \neq 0 \}$, where $r_0$ is a root of the polynomial $F_{(\lambda:\mu)}(t_1)=\mu^3 (1+t_1^4)+\lambda^3 t^3_1$, the affine curve $C_{(\lambda:\mu)}^{\circ}:=C_{(\lambda:\mu)} \cap \{s_1 \neq 0\}$ is the zero locus of a cubic equation in $\C^2$, and $C_{(\lambda:\mu)}$ is birational (thus isomorphic) to the (nonsingular) projective closure of $C_{(\lambda:\mu)}^{\circ}$ in $\PP^2$ with coordinates $(t_0:s_1:t_3)$. Hence, we can identify the two curves. Let $p \in C_{(\lambda:\mu)}^{\circ}$. By restricting the linear system defining the double cover $\psi_i$ to $C_{(\lambda:\mu)} \subseteq \PP^2_{(t_0:s_1:t_3)}$, one can check that the points $p_i$ and $\varphi_{p_i}(p)$ are contained in a conic, tangent to $C_{(\lambda:\mu)}$ at $p_i$ and $O:=(0:0:1)$. As in Lemma \ref{finiteBirE6(a1)} and Lemma \ref{finiteE6}, we deduce the following relations for the elliptic curve $(C_{(\lambda:\mu)}, O) \subseteq \PP^2$:
\begin{align*}
    p_1+p_2& =0;\\
    2p_2+p+\varphi_{p_2}(p) & =0;\\
    2p_1+\varphi_{p_2}(p) + \varphi_{p_1}\circ \varphi_{p_2}(p) & =0.
\end{align*}
In particular, 
\[\varphi_{p_1}\circ \varphi_{p_2}(p)= p+4p_2.\]
One can check (use MAGMA) that for a suitable choice of $(\lambda:\mu)$ (\emph{e.g.} $2\lambda^3+17\mu^3=0$ and $r_0=1/2$), the point $p_2$ is not a torsion point. This implies that $\varphi_{p_1}\circ \varphi_{p_2}$ has infinite order.
\end{proof}

    The automorphism group of $S$ is $\Aut(S) = 2 \times 4A_4$, see \cite[Table 6, Theorem 6.17, Type III]{DI}. Here $4A_4$ is a nonsplit central extension of $A_4$ by a cyclic group of order 4, more explicitly there exists an exact sequence
    \[
    0 \rightarrow 4 \rightarrow 4A_4 \rightarrow A_4 \rightarrow 0. 
    \]
    Let $G'$ be the image of $G$ in $A_4$ under the composition of quotient homomorphisms $2 \times 4A_4 \rightarrow 4A_4 \rightarrow A_4$. Notice $G'\simeq 3$ since $G'$ is necessarily a cyclic group of $A_4$ whose order is a multiple of $3$. It follows the image of $N_{2 \times 4A_4}(G)$ is contained in $N_{A_4}(3)$. Moreover, notice that $N_{A_4}(3)= 3$ as there are no proper normal subgroups in $A_4$ containing $3$ and $3$ is not normal in $A_4$. Finally since $2 \times 4A_4$ is a central extension of a central extension of $A_4$, one obtains $N_{2 \times 4A_4}(12)= 2 \times 12$.
    The group $\Bir^G(S)$ is generated by $G$, the standard Geiser involution $\gamma$ and two Bertini involutions with base locus $p_1$ and $p_2$ respectively.

The cases above yield the proof of Theorem \ref{Gbirationalsuperriddegree2}.

\bibliographystyle{amsplain}

\end{document}